\documentclass[12pt]{amsart}
\usepackage{amsmath}
\usepackage{hyperref}
\usepackage{amssymb}
\usepackage{latexsym}
\usepackage{amscd}
\usepackage{graphicx} %We can use any other package if necessary
\usepackage{amsthm}
\usepackage{mathrsfs}
\usepackage{xypic}
\usepackage{bm}
\usepackage{color}

\newdimen\AAdi%
\newbox\AAbo%
% %ou cmex10
%
\def\AAk#1#2{\s_etbox\AAbo=\hbox{#2}\AAdi=\wd\AAbo\kern#1\AAdi{}}%
\def\AAr#1#2#3{\s_etbox\AAbo=\hbox{#2}\AAdi=\ht\AAbo\raise#1\AAdi\hbox{#3}}%
%%%%%%%%%%%%%%%%%%%%%%%%%%%%%%%%%%%%%%%%%%%%%%%%%%%%%%%%%%%%%%%%%%%%%%%%%%%%%%
\font\tenmsb=msbm10 at 12pt \font\sevenmsb=msbm7 at 8pt
\font\fivemsb=msbm5 at 6pt
\newfam\msbfam
\textfont\msbfam=\tenmsb \scriptfont\msbfam=\sevenmsb
\scriptscriptfont\msbfam=\fivemsb
\def\Bbb#1{{\tenmsb\fam\msbfam#1}}
%%%%%%%%%%%%%%%%%%%%%%%%%%%%%%%%%%%%%%%%%%%%%%%%%%%%%%%%%%%%%%%%%%%%%%%%%%%%%%
\textwidth 15cm \textheight 22.8cm \topmargin 0cm \oddsidemargin
0.5cm \evensidemargin 0.5cm
\parindent = 5 mm
\hfuzz     = 6 pt
\parskip   = 3 mm

\newtheorem{thm}{Theorem}[section]
\newtheorem{lem}{Lemma}[section]
\newtheorem{cor}{Corollary}[section]

\newtheorem{pro}{Proposition}[section]
\newtheorem{defi}{Definition}[section]

\newtheorem{problem}{Problem}[section]

\newcommand{\ba}{\begin{array}}
\newcommand{\ea}{\end{array}}

\newcommand{\Section}[2]{\setcounter{equation}{0}
\allowdisplaybreaks
\section[#1]{#2}}

\def\n{\nabla}

\def\f#1#2{\frac{#1}{#2}}

\def\mc#1{\mathcal{#1}}

\def\pf#1{\frac{\partial}{\partial #1}}
\def\pd#1#2{\frac {\partial #1}{\partial #2}}

\def\td{\tilde}

\def\a{\alpha}
\def\be{\beta}

\def\p#1{\partial #1}

\def\de{\delta}

\def\ep{\varepsilon}

\def\G{\Gamma}
\def\g{\gamma}
\def\k{\kappa}

\def\th{\theta}

\def\si{\sigma}

\def\w{\wedge}

\def\R{\Bbb{R}}

\def\lan{\langle}
\def\ran{\rangle}
\def\ra{\rightarrow}

\def\ol{\overline}
\def\mb{\mathbf}
\def\O{\Bbb{O}}
\def\H{\Bbb{H}}
\def\Re{\text{Re }}
\def\Im{\text{Im }}
\def\Id{\mathbf{Id}}
\def\Arg{\text{Arg}}

\subjclass{58E20,53A10.}

\begin{document}
\title
[Submanifolds with constant Jordan angles] {Submanifolds with constant Jordan angles}

\author
[J. Jost, Y. L. Xin and Ling Yang]{J. Jost, Y. L. Xin and Ling Yang}
\address{Max Planck Institute for Mathematics in the
Sciences, Inselstr. 22, 04103 Leipzig, Germany.}
\email{jost@mis.mpg.de}
\address {Institute of Mathematics, Fudan University,
Shanghai 200433, China.} \email{ylxin@fudan.edu.cn}
\address{Institute of Mathematics, Fudan University,
Shanghai 200433, China.} \email{yanglingfd@fudan.edu.cn}
\thanks{The first author is supported by the ERC Advanced Grant
  FP7-267087. The second named author and the third named author are grateful to the Max Planck
Institute for Mathematics in the Sciences in Leipzig for its
hospitality and  continuous support. }

\begin{abstract}
To study the Lawson-Osserman's counterexample \cite{l-o} to the
Bernstein problem for minimal submanifolds of higher codimension, a
new geometric concept, submanifolds in Euclidean space with constant
Jordan angles(CJA), is introduced. By exploring
the second fundamental form of submanifolds with CJA, we can
characterize the Lawson-Osserman's cone  from the viewpoint of
Jordan angles.
\end{abstract}
\maketitle

\small \parskip0.1mm\tableofcontents \normalsize\parskip3mm

\Section{Introduction}{Introduction}\label{cja}

In previous works, we have systematically studied the Bernstein problem for complete minimal submanifolds of higher codimension in Euclidean space
 (see \cite{h-j-w,j-x,j-x-y2,j-x-y4,x-y1}). In particular, we could prove that a complete minimal submanifold in Euclidean space is affine linear
 if it does not deviate too much from a linear subspace in the sense that a certain function $v$ defined in terms of Jordan angles is bounded by 3.
 It is natural to ask whether that number is optimal. Now, there is the Lawson-Osserman's counterexample \cite{l-o} to the higher codimension Bernstein
 problem for which $v$ is identically 9. The aim of the present paper then is to understand this example in geometric terms, in particular in terms of
 Jordan angles. Here, the Jordan angles between two linear subspaces $P$ and $Q$ are the critical
values of the angle $\th$ between the  nonzero vectors $u$ in $P$
and their orthogonal projection $u^*$ in $Q$. When these Jordan
angles are constant for all the normal spaces of some submanifold
$M$ of Euclidean space and a fixed linear reference subspace, we say
that $M$ has constant Jordan angles. This is the fundamental concept
of our paper, and we abbreviate it as CJA. For a precise statement,
refer to Definition \ref{def-CJA} below. Now it turns out the
Lawson-Osserman's counterexample has CJA relative to the imaginary
quaternions when viewed as a subspace of the imaginary octonians.
%We therefore investigate CJA submanifolds, and derive a structure theorem, which says that minimal graphs with CJA and at most two
%distinct normal and tangential Jordan angles are affine linear.
Harvey-Lawson \cite{h-l} showed that the
Lawson-Osserman's cone is a four dimensional coassociative
submanifold in $\R^7$ which can be identified with the imaginary
octonians. Therefore, we study such coassociative submanifolds with
CJA and find that a coassociative graph with CJA relative to the
imaginary quaternions and at most two different normal Jordan angles
either is affine linear or a translate of a portion of the
Lawson-Osserman's cone.

For more precise statements, we now develop some notation and technical concepts.

\subsection{Jordan angles and angle spaces}\label{1.2}

Let $P$ and $Q_0$ be $m$-dimensional subspaces (i.e. $m$-planes) in $\R^{n+m}$.
The \textit{Jordan angles} between $P$ and $Q_0$ are the critical
values of the angle $\th$ between a nonzero vector $u$ in $P$ and its orthogonal projection $u^*$ in $Q_0$ as $u$ runs through $P$.
This concept was firstly introduced by Jordan \cite{j} in 1875, and they are also called
\textit{principal angles} in some references, e.g. \cite{d}. If $\th$
is a nonzero Jordan angle between $P$ and $Q_0$ determined by a unit vector $u$ in $P$ and its projection $u^*$ in $Q_0$, then $u$ is called
an \textit{angle direction} of $P$ relative to $Q_0$, and the $2$-plane spanned by $u$ and $u^*$ is called an \textit{angle 2-plane} between $P$ and $Q_0$
(see \cite{w}).

Denote by $\mc{P}_0$ the orthogonal projection of $\R^{n+m}$ onto $Q_0$ and by $\mc{P}$ the orthogonal projection of $\R^{n+m}$ onto $P$. Then for any $u\in P$ and $\ep\in Q_0$,
\begin{equation}
\aligned
\lan \mc{P}_0 u,\ep\ran&=\lan \mc{P}_0 u+(u-\mc{P}_0 u),\ep\ran=\lan u,\ep\ran\\
&=\lan u,\mc{P}\ep+(\ep-\mc{P}\ep)\ran=\lan u,\mc{P}\ep\ran
\endaligned
\end{equation}
and moreover
\begin{equation}
\lan (\mc{P}\circ\mc{P}_0)u,v\ran=\lan \mc{P}_0u,\mc{P}_0v\ran=\lan u,(\mc{P}\circ \mc{P}_0)v\ran
\end{equation}
holds for every $u,v\in P$, which implies $\mc{P}\circ \mc{P}_0$ is a nonnegative definite self-adjoint transformation
on $P$.

For any nonzero vector $u\in P$,
\begin{equation}
\cos^2 \angle(u,u^*)=\f{\lan u^*,u^*\ran}{\lan u,u\ran}=\f{\lan \mc{P}_0 u,\mc{P}_0 u\ran}{\lan u,u\ran}=\f{\lan (\mc{P}\circ \mc{P}_0)u,u\ran}{\lan u,u\ran}.
\end{equation}
Hence $\th$ is a Jordan angle between $P$ and $Q_0$ if and only if $\mu:=\cos^2\th$ is an eigenvalue of $\mc{P}\circ \mc{P}_0$, and
$u$ is an angle direction with respect to $\th$ if and only if $u$ is an eigenvector associated to the eigenvalue
$\mu$, i.e.
\begin{equation}
(\mc{P}\circ \mc{P}_0)u=\mu u=\cos^2\th\ u.
\end{equation}
 Therefore, all the angle directions with respect to $\th$ constitute a linear subspace of $P$, which is called
an \textit{angle space} of $P$ relative to $Q_0$ and we denote it by $P_\th$. In particular,
\begin{equation}
P_0=P\cap Q_0,\qquad P_{\pi/2}=P\cap Q_0^\bot.
\end{equation}
 The dimension of $P_\th$ is called the \textit{multiplicity} of
$\th$, which is denoted by $m_\th$. If we denote by $\text{Arg}(P,Q_0)$ the set consisting of all the Jordan angles between
$P$ and $Q_0$, then
\begin{equation}\label{oplus1}
P=\bigoplus_{\th\in \text{Arg}(P,Q_0)}P_\th
\end{equation}
and the angle spaces are mutually orthogonal to each other. Hence
\begin{equation}
m=\sum_{\th\in \text{Arg}(P,Q_0)}m_\th.
\end{equation}

The Jordan angles between two $m$-planes completely determine their relative positions. More precisely,
one can conclude that:

\begin{pro}\cite{w}\label{position}
Let $P_1,Q_1$ and $P_2,Q_2$ be any two pairs of $m$-planes in $\R^{n+m}$. If $\Arg(P_1,Q_1)=\Arg(P_2,Q_2)$
and the multiplicities of the corresponding Jordan angles are equivalent, then there exists a rigid motion
of $\R^{n+m}$, carrying $P_1, Q_1$ onto $P_2,Q_2$, respectively. And vice versa.
\end{pro}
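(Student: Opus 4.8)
The plan is to establish the substantive direction by explicitly constructing an orthogonal transformation of $\R^{n+m}$; the converse (``and vice versa'') is immediate, since any rigid motion carrying the linear subspace $P_1$ (through the origin) to $P_2$ may be taken to fix the origin and hence to be an element of $O(n+m)$, and such maps preserve all inner products, so they preserve the operator $\mc{P}\circ\mc{P}_0$ up to conjugation and therefore its eigenvalues $\cos^2\th$ together with their multiplicities. Because both planes are linear subspaces through the origin, it suffices to produce an orthogonal map, with no translation part.

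First I would attach to a single pair $(P,Q)$ a canonical adapted orthonormal frame. Using the angle-space decomposition \eqref{oplus1}, I choose an orthonormal eigenbasis $\{e_1,\dots,e_m\}$ of $P$ for $\mc{P}\circ\mc{P}_0$, grouped by the angles $\th\in\Arg(P,Q)$, so that $(\mc{P}\circ\mc{P}_0)e_i=\cos^2\th_i\,e_i$. For each index with $\th_i\ne\pi/2$ I set $f_i:=(\cos\th_i)^{-1}\mc{P}_0 e_i\in Q$. The inner-product identities already recorded in the excerpt give $\lan f_i,f_j\ran=\delta_{ij}$ and $\lan e_i,f_j\ran=\cos\th_i\,\delta_{ij}$, so the $f_i$ are orthonormal in $Q$, the pair $(e_i,f_i)$ spans an angle $2$-plane meeting at angle $\th_i$ when $\th_i\in(0,\pi/2)$, degenerates to the common line $e_i=f_i\in P\cap Q$ when $\th_i=0$, and for $\th_i=\pi/2$ only $e_i\in P\cap Q^\bot$ survives. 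A dimension count shows the $f_i$ span an $(m-m_{\pi/2})$-dimensional subspace of $Q$; completing it inside $Q$ yields orthonormal vectors $g_1,\dots,g_{m_{\pi/2}}$, and a short check (each $g_k\in Q$ is orthogonal to both $\mc{P}_0 e_i$ and $e_i-\mc{P}_0 e_i$) shows $g_k\bot P$. Thus $\{e_i\}\cup\{f_i\}\cup\{g_k\}$ is an orthonormal basis of $P+Q$, of dimension $2m-m_0$, which I then extend arbitrarily to an orthonormal basis of $\R^{n+m}$, the leftover complement having the fixed dimension $n-m+m_0$.

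Next I would compare the two pairs. Applying the construction to $(P_1,Q_1)$ and $(P_2,Q_2)$ produces two adapted bases. Since $\Arg(P_1,Q_1)=\Arg(P_2,Q_2)$ with equal multiplicities, the two bases share their entire block structure: the same list of common lines ($\th=0$), the same angle $2$-planes with identical angles ($\th\in(0,\pi/2)$), and the same number $m_{\pi/2}$ of orthogonal $(e,g)$ pairs; moreover $m_0$ agrees, so the leftover complements share the dimension $n-m+m_0$. I then define $T\in O(n+m)$ block by block, sending $e_i^{(1)}\mapsto e_i^{(2)}$, $f_i^{(1)}\mapsto f_i^{(2)}$, $g_k^{(1)}\mapsto g_k^{(2)}$, and any chosen isometry between the leftover complements. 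As $T$ carries an orthonormal basis to an orthonormal basis it is orthogonal, and it maps $\text{span}\{e_i^{(1)}\}=P_1$ onto $P_2$ and $\text{span}\{f_i^{(1)},g_k^{(1)}\}=Q_1$ onto $Q_2$, giving the desired rigid motion.

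The main obstacle is the bookkeeping at the extreme angles $\th=0$ and $\th=\pi/2$, where the angle $2$-planes degenerate: one must verify that the projected vectors $f_i$ together with the completion $g_k$ exactly exhaust $Q$ and remain mutually orthogonal, and that the ambient complement dimension $n-m+m_0$ is nonnegative and depends only on the Jordan-angle data. The inequality $m_0=\dim(P\cap Q)\ge \dim P+\dim Q-(n+m)=m-n$, forced by $P,Q\subset\R^{n+m}$, guarantees nonnegativity, so the matching frames always exist and the argument closes.
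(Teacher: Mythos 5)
The paper itself offers no proof here: Proposition \ref{position} is quoted from Wong \cite{w}. So the comparison is against the standard argument, and your construction is essentially that standard argument (adapted frames for a pair of subspaces, the linear-algebra core of the CS decomposition / Jordan's original analysis). Its substance checks out: the Gram identities $\lan f_i,f_j\ran=\de_{ij}$ and $\lan e_i,f_j\ran=\cos\th_i\,\de_{ij}$ follow from the paper's (1.1)--(1.2); the verification $g_k\bot P$ is correct (orthogonality to $\mc{P}_0 e_i$ and to $e_i-\mc{P}_0 e_i$, plus the $\th_i=\pi/2$ case where $e_i\in Q^\bot$ automatically); the counts $\dim(P+Q)=2m-m_0$ and $\dim(P+Q)^\bot=n-m+m_0\geq 0$ are right; and your reduction of the converse to a linear orthogonal map is valid, since a rigid motion carrying one linear subspace onto another forces the translation part to lie in the target, so the orthogonal part alone already does the carrying, and conjugation by it preserves the spectrum of $\mc{P}\circ\mc{P}_0$.

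One assertion, however, is false as written, and it is exactly the one your orthogonality claim for $T$ leans on: $\{e_i\}\cup\{f_i\}\cup\{g_k\}$ is \emph{not} an orthonormal basis of $P+Q$, because $\lan e_i,f_i\ran=\cos\th_i\neq 0$ whenever $\th_i\in[0,\pi/2)$ (and $f_i=e_i$ outright when $\th_i=0$); hence the sentence ``as $T$ carries an orthonormal basis to an orthonormal basis it is orthogonal'' does not go through, and likewise one cannot ``extend to an orthonormal basis of $\R^{n+m}$'' from a non-orthonormal set. The repair is immediate from identities you already established, in either of two ways. (a) Note that the full Gram matrix of the frame $\{e_i\}\cup\{f_i\}\cup\{g_k\}$ is determined by the Jordan angles and multiplicities alone, hence coincides for the two pairs; the linear map matching the frames therefore preserves inner products on $P_1+Q_1$, and extending it by any isometry between the orthogonal complements (of equal dimension $n-m+m_0$) yields an element of $O(n+m)$. (b) Genuinely orthonormalize: for $\th_i\in(0,\pi/2)$ set $\hat f_i:=(\sin\th_i)^{-1}(f_i-\cos\th_i\,e_i)$; then $\{e_i\}\cup\{\hat f_i\}\cup\{g_k\}$ is orthonormal, spans $P+Q$, and since $f_i=\cos\th_i\,e_i+\sin\th_i\,\hat f_i$ with coefficients depending only on $\th_i$, the frame-matching map still sends $f_i^{(1)}$ to $f_i^{(2)}$ and hence $Q_1$ onto $Q_2$. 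With either patch your proof closes; the slip is one of bookkeeping, not of conception.
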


Similarly, let  $\text{Arg}(Q_0,P)$ denote the set consisting of all the
Jordan angles between $Q_0$ and $P$, then
$\th\in \text{Arg}(Q_0,P)$ if and only
if $\mu:=\cos^2\th$ is an eigenvalue of $\mc{P}_0\circ \mc{P}$.
Denote by $(Q_0)_\th$ the angle space of $Q_0$ relative to $P$ associated to $\th$, then
$\ep\in (Q_0)_\th$ if and only if $(\mc{P}_0\circ
\mc{P})\ep=\cos^2\th\ \ep$, and
\begin{equation}\label{oplus2}
Q_0=\bigoplus_{\th\in \text{Arg}(Q_0,P)}(Q_0)_\th.
\end{equation}

Let $P^\bot$ and $Q_0^\bot$ be the orthogonal complements of $P$ and
$Q_0$, and denote by $\mc{P}^\bot$ and $\mc{P}_0^\bot$ the
orthogonal projections of $\R^{n+m}$ onto $P^\bot$ and $Q_0^\bot$,
respectively. As above, the set consisting of all the
Jordan angles between $P^\bot$ and $Q_0^\bot$ is denoted by
$\text{Arg}(P^\bot,Q_0^\bot)$, $P_\th^\bot$ denotes the angle space
associated to $\th\in \text{Arg}(P^\bot,Q_0^\bot)$, and
$m_\th^\bot:=\dim P_\th^\bot$ denotes the multiplicity of $\th$.

The following lemma
reveals the close relationship between $\text{Arg}(P,Q_0)$, $\Arg(Q_0,P)$ and
$\text{Arg}(P^\bot,Q_0^\bot)$.

\begin{lem}\label{Jordan}(\cite{j-x-y4})
Let $P,Q_0$ be $m$-planes in $\R^{n+m}$, then $\Arg(P,Q_0)=\Arg(Q_0,P)$ and the multiplicities of each corresponding Jordan angles are equivalent.
If we denote
\begin{equation}
R_\th:=P_\th+(Q_0)_\th
\end{equation}
for each $\th\in \Arg(P,Q_0)$,
then $R_\th\bot R_\si$ whenever $\th\neq \si$, and
\begin{equation}\label{Rth}
P+Q_0=\bigoplus_{\th\in \Arg(P,Q_0)}R_\th.
\end{equation}
For any $\th\in (0,\pi/2]$, $\th\in \Arg(P^\bot,Q_0^\bot)$ if and only if
 $\th\in \Arg(P,Q_0)$, and $m_\th^\bot=m_\th$, $R_\th=P_\th\oplus P_\th^\bot$. Moreover, for every $\th\in
\text{Arg}(P,Q_0)\cap (0,\pi/2)$, there exists an isometric
automorphism $\Phi_\th: R_\th\ra R_\th$, such that

(i) $\Phi_\th(P_\th)=P_\th^\bot$, $\Phi_\th(P_\th^\bot)=P_\th$;

(ii) $\Phi_\th^2=-\Id$;

(iii) For any nonzero vector $u\in P_\th$ ($v\in P_\th^\bot$),
$\Phi_\th(u)$ ($\Phi_\th(v)$) lies in the angle 2-plane generated by
$u$ ($v$); more precisely,
\begin{equation}\label{Phi}\aligned
\sec\th\ \mc{P}_0 u&=\cos\th\ u-\sin\th\ \Phi_\th(u),\\
\sec\th\ \mc{P}_0^\bot v&=\cos\th\ v-\sin\th\ \Phi_\th(v).
\endaligned
\end{equation}

\end{lem}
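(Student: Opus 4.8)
The plan is to reduce everything to the spectral theory of the single operator $B:=\mc{P}_0|_P:P\ra Q_0$, whose adjoint is $B^*=\mc{P}|_{Q_0}:Q_0\ra P$ (this is precisely the reciprocity $\lan\mc{P}_0 u,\ep\ran=\lan u,\mc{P}\ep\ran$ recorded earlier). Then $B^*B=\mc{P}\circ\mc{P}_0|_P$ and $BB^*=\mc{P}_0\circ\mc{P}|_{Q_0}$ are the two nonnegative self-adjoint operators whose eigenvalues $\cos^2\th$ detect $\Arg(P,Q_0)$ and $\Arg(Q_0,P)$. First I would note that $B$ carries $P_\th$ into $(Q_0)_\th$ and $B^*$ carries $(Q_0)_\th$ into $P_\th$, since $BB^*(Bu)=B(B^*Bu)=\cos^2\th\,Bu$. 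The equality $\Arg(P,Q_0)=\Arg(Q_0,P)$ with matching multiplicities then follows from the classical fact that $B^*B$ and $BB^*$ share the same nonzero spectrum with equal multiplicities, $B$ giving the isomorphism between the corresponding eigenspaces; the eigenvalue $\cos^2\th=0$ (i.e. $\th=\pi/2$) is handled by rank--nullity together with $\dim P=\dim Q_0=m$, which forces $\dim\ker B=\dim\ker B^*$.

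For the orthogonality of the $R_\th$ I would argue as follows. Within $P$ (resp. $Q_0$) the angle spaces $P_\th$ (resp. $(Q_0)_\th$) are mutually orthogonal, being eigenspaces of a self-adjoint operator for distinct eigenvalues. For the cross terms, given $u\in P_\th$ and $\ep\in(Q_0)_\si$ with $\th\ne\si$, I would rewrite $\lan u,\ep\ran=\lan\mc{P}_0 u,\ep\ran=\lan Bu,\ep\ran$ and use $Bu\in(Q_0)_\th\perp(Q_0)_\si$ to get $\cos^2\si\,\lan u,\ep\ran=0$; the symmetric computation with $B^*$ gives $\cos^2\th\,\lan u,\ep\ran=0$, and since $\th\ne\si$ at most one of $\cos^2\th,\cos^2\si$ vanishes, forcing $\lan u,\ep\ran=0$. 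Hence $R_\th\perp R_\si$, and because $P=\bigoplus_\th P_\th$ and $Q_0=\bigoplus_\th(Q_0)_\th$ run over the common index set $\Arg(P,Q_0)$, the sum $P+Q_0=\sum_\th R_\th$ is orthogonal and direct, which is \eqref{Rth}.

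The heart of the lemma is the passage to the complements and the construction of $\Phi_\th$. The case $\th=\pi/2$ is immediate: $P_{\pi/2}=P\cap Q_0^\bot$ and $(Q_0)_{\pi/2}=Q_0\cap P^\bot=P^\bot\cap(Q_0^\bot)^\bot=P_{\pi/2}^\bot$, so $R_{\pi/2}=P_{\pi/2}\oplus P_{\pi/2}^\bot$ with $m_{\pi/2}^\bot=m_{\pi/2}$. For $\th\in(0,\pi/2)$ I would define $\Phi_\th$ on $P_\th$ by solving the first line of \eqref{Phi},
\[
\Phi_\th(u)=\frac{1}{\sin\th}\bigl(\cos\th\,u-\sec\th\,\mc{P}_0 u\bigr),
\]
and then verify three facts by direct computation using $\mc{P}\circ\mc{P}_0 u=\cos^2\th\,u$ and $\lan u,\mc{P}_0 u\ran=\cos^2\th\,|u|^2$: that $|\Phi_\th(u)|=|u|$; that $\mc{P}\,\Phi_\th(u)=0$, so $\Phi_\th(u)\in P^\bot$; and that $\mc{P}^\bot\circ\mc{P}_0^\bot\,\Phi_\th(u)=\cos^2\th\,\Phi_\th(u)$, so in fact $\Phi_\th(u)\in P_\th^\bot$. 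This exhibits $\Phi_\th$ as an injective isometry $P_\th\ra P_\th^\bot$, whence $m_\th^\bot\ge m_\th$; applying the same construction to the pair $(P^\bot,Q_0^\bot)$, whose complements are again $P,Q_0$, yields the reverse inequality and therefore $m_\th^\bot=m_\th$, the equivalence $\th\in\Arg(P^\bot,Q_0^\bot)\Leftrightarrow\th\in\Arg(P,Q_0)$, and $\Phi_\th(P_\th)=P_\th^\bot$. Since $\Phi_\th(u)\in P_\th+(Q_0)_\th=R_\th$ and $P_\th\perp P_\th^\bot$ (they sit in $P$ and $P^\bot$), the dimension count $2m_\th=\dim R_\th$ gives $R_\th=P_\th\oplus P_\th^\bot$.

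It remains to promote $\Phi_\th$ to an automorphism of $R_\th$ with the stated properties. I would define it on $P_\th^\bot$ by the second line of \eqref{Phi}, which is exactly the previous formula for the complementary pair, so the two definitions are compatible; properties (i) and (iii) are then built into the construction. The only genuine verification left is $\Phi_\th^2=-\Id$: writing $v=\Phi_\th(u)$ and substituting into the formula, one finds $\mc{P}_0^\bot v=\cot\th\,(u-\mc{P}_0 u)$, so that in $\cos\th\,v-\sec\th\,\mc{P}_0^\bot v$ the $\mc{P}_0 u$-terms cancel, leaving $-\sin\th\,u$, i.e. $\Phi_\th(v)=-u$; the symmetric computation gives $\Phi_\th^2=-\Id$ on $P_\th^\bot$ as well, hence on all of $R_\th$. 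Orthogonality of $\Phi_\th$ then follows by polarization, since it is norm-preserving and linear on each summand and interchanges the orthogonal pair $P_\th,P_\th^\bot$. I expect the main obstacle to be precisely this construction: guessing the correct normalization in the definition of $\Phi_\th$ and checking that it maps into the angle space $P_\th^\bot$ (not merely into $P^\bot$) and squares to $-\Id$. Once the operator $B$ and the identity $\mc{P}\circ\mc{P}_0=\cos^2\th$ on $P_\th$ are in hand, these reduce to short but careful computations, and all the remaining assertions are standard two-subspace spectral theory.
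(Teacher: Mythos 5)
A preliminary point: this paper does not actually prove Lemma \ref{Jordan} --- it imports it from \cite{j-x-y4} --- so there is no in-paper proof to compare against. The proof in that reference, like Wong's classical treatment \cite{w}, works in adapted orthonormal bases: one diagonalizes the relative position by choosing bases $\{u_i\}$ of $P$ and $\{\ep_j\}$ of $Q_0$ with $\lan u_i,\ep_j\ran=\cos\th_i\,\de_{ij}$ and reads off $P_\th^\bot$ and $\Phi_\th$ on basis vectors. Your argument is the coordinate-free formulation of the same singular value decomposition, and it is correct: $B=\mc{P}_0|_P$ with adjoint $B^*=\mc{P}|_{Q_0}$, the common nonzero spectrum of $B^*B$ and $BB^*$ with $B$ intertwining eigenspaces, rank--nullity for the $\th=\pi/2$ multiplicities, and the formula $\Phi_\th(u)=\f{1}{\sin\th}\bigl(\cos\th\,u-\sec\th\,\mc{P}_0u\bigr)$ forced by \eqref{Phi}. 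I checked the three computations you identify as the real content and they all close: $|\Phi_\th(u)|=|u|$ and $\mc{P}\,\Phi_\th(u)=0$ follow from $\lan u,\mc{P}_0u\ran=|\mc{P}_0u|^2=\cos^2\th\,|u|^2$ and $\mc{P}\mc{P}_0u=\cos^2\th\,u$; the membership $\Phi_\th(u)\in P_\th^\bot$ follows since $\mc{P}_0u=\cos^2\th\,u-\sin\th\cos\th\,\Phi_\th(u)$ gives $\mc{P}^\bot\mc{P}_0^\bot\Phi_\th(u)=\cos^2\th\,\Phi_\th(u)$; and with $v=\Phi_\th(u)$ one indeed gets $\mc{P}_0^\bot v=\cot\th\,(u-\mc{P}_0u)$, hence $\cos\th\,v-\sec\th\,\mc{P}_0^\bot v=-\sin\th\,u$ and $\Phi_\th^2=-\Id$. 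The symmetry device --- running the construction on the complementary pair $(P^\bot,Q_0^\bot)$ to get the reverse inequality $m_\th\ge m_\th^\bot$ and to define $\Phi_\th$ on $P_\th^\bot$ --- is exactly what makes the proof self-contained, and since the two defining formulas live on the complementary summands $P_\th$ and $P_\th^\bot$ of $R_\th$ there is no genuine compatibility issue.

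Two cosmetic remarks, neither a gap. In the cross-orthogonality step the factors $\cos^2\si$ and $\cos^2\th$ are superfluous: once $\lan u,\ep\ran=\lan Bu,\ep\ran$ and $Bu\in (Q_0)_\th\cup\{0\}$ (with $Bu=0$ precisely when $\th=\pi/2$), orthogonality of eigenspaces of $BB^*$ gives $\lan u,\ep\ran=0$ outright; alternatively the one-line version $(\cos^2\th-\cos^2\si)\lan u,\ep\ran=\lan B^*Bu,\ep\ran-\lan u,BB^*\ep\ran=0$ needs no case distinction at all. And in the final dimension count you should make explicit that $\dim R_\th\le m_\th+\dim (Q_0)_\th=2m_\th$ (matched multiplicities), so that the $2m_\th$-dimensional subspace $P_\th\oplus P_\th^\bot\subseteq R_\th$ forces equality; as written, ``the dimension count $2m_\th=\dim R_\th$'' is the conclusion rather than the input.
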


\noindent \textbf{Remark. }Let $P$ and $Q_0$ be a pair of intersecting planes in $\R^3$,
then $\Arg(P,Q_0)=\{\th,0\}$, where $\th$ is the dihedral angle between $P$ and $Q_0$.
$$\includegraphics[height=2in]{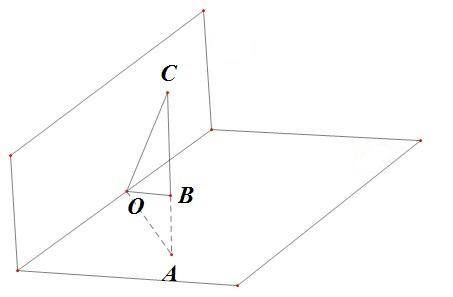}$$
Denote by $l$ their line of intersection and $O$ the origin of $\R^3$.
Choose $A\in \R^3$, such that $v:=\overrightarrow {OA}$ is a unit vector orthogonal to $P$. Through $A$, draw a perpendicular line to $Q_0$, intersecting
$Q_0$ at $B$, $P$ at $C$. Denote $u:=\f{\overrightarrow{OC}}{|\overrightarrow{OC}|}$, then $R_\th=\text{span}\{u,v\}$, $\Phi_\th(v)=u$ and $\Phi_\th(u)=-v$.
\bigskip

Denote
\begin{equation}
r(P):=\sum_{\th\in \text{Arg}(P,Q_0)\cap (0,\pi/2]}m_\th=\sum_{\th\in \text{Arg}(P^\bot,Q_0^\bot)\cap (0,\pi/2]}m_\th^\bot
\end{equation}
then $0\in \text{Arg}(P,Q_0)$ if and only if $r(P)<m$, and $m_0=m-r(P)$. Similarly
$0\in \text{Arg}(P^\bot,Q_0^\bot)$ if and only if $r(P)<n$, and $m_0^\bot=n-r(P)$.

\subsection{Angle space distributions and submanifolds with CJA}

Let $M$ be an $n$-dimensional submanifold in $\R^{n+m}$ and $Q_0$ be a fixed
$m$-plane in $\R^{n+m}$. Denote by $TM$ and $NM$
the tangent bundle and the normal bundle along $M$, respectively.
For any $p\in M$, denote by $\Arg(N_p M,Q_0)$ ($\Arg(T_p M,Q_0^\bot)$)  the set
consisting of all the Jordan angles between $N_p M$ ($T_p M$) and $Q_0$ ($Q_0^\bot$),
which are called \textit{normal (tangent) Jordan angles} at $p$.
Let $\th$ be a $[0,\pi/2]$-valued smooth function on $M$, if
$\th(p)\in \Arg(N_p M, Q_0)$ ($\th(p)\in \Arg(T_p M,Q_0^\bot)$),
we say $\th$ is a \textit{normal (tangent) Jordan angle function}  of $M$ relative to $Q_0$. Denote by $\Arg^N$ ($\Arg^T$)
the set consisting of all the normal (tangent) Jordan angle functions of
$M$ relative to $Q_0$.
If $\th$ is a smooth function on $M$ that is nonzero
everywhere, then Lemma \ref{Jordan} implies $\th\in \Arg^N$ if and only if $\th\in \Arg^T$.

Denote
\begin{equation}\aligned
N_\th M&:=\{\nu\in N_p M: p\in M, \nu\text{ is an angle direction associated to }\th(p)\},\\
T_\th M&:=\{v\in T_p M: p\in M, v\text{ is an angle direction associated to }\th(p)\}.
\endaligned
\end{equation}
Let $\mc{P}_0$ and $\mc{P}_0^\bot$ be orthogonal projections onto $Q_0$ and $Q_0^\bot$,
$(\cdot)^T$ and $(\cdot)^N$ denote orthogonal projections onto $T_p M$ and $N_p M$, respectively.
Then $\nu\in N_{p,\th}M:=N_\th M\cap N_p M$ if and only if
\begin{equation}
(\mc{P}_0 \nu)^N=\cos^2\th(p) \nu
\end{equation}
and similarly $u\in T_{p,\th}M:=T_\th M\cap T_p M$ if and only if
\begin{equation}
(\mc{P}_0^\bot u)^T=\cos^2\th(p) u.
\end{equation}
Let $m_\th^N(p):=\dim N_{p,\th} M$, $m_\th^T(p):=\dim T_{p,\th} M$ for every $p\in M$, then
$m_\th^N$ and $m_\th^T$ are both $\Bbb{Z}^+$-valued functions on $M$.

Based on \cite{no}, one can easily deduced that

\begin{lem}\label{dis}(\cite{j-x-y4})
Let $\th$ be a normal (tangent) Jordan angle function of $M$ relative to $Q_0$.
If $m_\th^N$ ($m_\th^T$) is a constant function on $M$, then $N_\th M$ ($T_\th M$) is a smooth subbundle of $NM$ ($TM$).
\end{lem}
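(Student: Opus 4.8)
The plan is to realize each fiber $N_{p,\th}M$ as an eigenspace of a single smoothly varying self-adjoint endomorphism field on the normal bundle, and then to invoke the standard principle that eigenspaces of such a field, attached to a smooth eigenvalue of constant multiplicity, assemble into a smooth subbundle. I will carry this out for the normal case $N_\th M$; the tangent case $T_\th M$ is verbatim the same upon replacing $\mc{P}_0$ and $(\cdot)^N$ by $\mc{P}_0^\bot$ and $(\cdot)^T$.

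First I would define, for each $p\in M$, the endomorphism $B_p\colon N_pM\to N_pM$ by $B_p\nu=(\mc{P}_0\nu)^N$. Since $\mc{P}_0$ is self-adjoint and orthogonal projection onto $N_pM$ leaves inner products with vectors already in $N_pM$ unchanged, one has $\lan B_p\nu,\nu'\ran=\lan\mc{P}_0\nu,\nu'\ran=\lan\nu,B_p\nu'\ran$, so $B_p$ is self-adjoint (indeed nonnegative). The field $p\mapsto B_p$ is smooth: choosing a local smooth orthonormal frame $\{\nu_1,\dots,\nu_m\}$ of $NM$, the matrix entries $\lan B_p\nu_a,\nu_b\ran=\lan\mc{P}_0\nu_a,\nu_b\ran$ are smooth in $p$, because $\mc{P}_0$ is a constant linear map and the frame varies smoothly. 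By the characterization $(\mc{P}_0\nu)^N=\cos^2\th(p)\,\nu$, the fiber $N_{p,\th}M$ is exactly the eigenspace $\ker\big(B_p-\mu(p)\,\Id\big)$, where $\mu:=\cos^2\th$ is a smooth function on $M$.

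Next I would set up the spectral (Riesz) projection. Fixing $p_0\in M$, I choose a closed contour $\gamma\subset\C$ encircling $\mu(p_0)$ and no other eigenvalue of $B_{p_0}$, and set $\Pi_p=\frac{1}{2\pi i}\oint_\gamma (z\,\Id-B_p)^{-1}\,dz$, the projection onto the part of the spectrum of $B_p$ enclosed by $\gamma$. Because $B_p$ depends smoothly on $p$ and the resolvent is holomorphic in $z$ off the spectrum, $\Pi_p$ depends smoothly on $p$ as long as $\gamma$ avoids the spectrum of $B_p$; and a smooth field of projections of locally constant rank has image a smooth subbundle. So it remains to show that, on a neighborhood of $p_0$, the contour $\gamma$ stays off the spectrum and encloses exactly the eigenvalue $\mu(p)$ with eigenspace $N_{p,\th}M$.

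This last point is where the constant multiplicity hypothesis is essential, and it is the step I expect to be the main obstacle, since a priori $\mu(p)$ could collide with a neighboring eigenvalue and destroy smoothness. The resolution is as follows. Choosing an interval $(\mu(p_0)-\de,\mu(p_0)+\de)$ (inside $\gamma$) that contains $\mu(p_0)$ but no other eigenvalue of $B_{p_0}$, continuity of the spectral projection over this fixed interval forces its rank, an integer, to be locally constant, hence equal to the multiplicity of $\mu(p_0)$, namely $m_\th^N(p_0)$, for all $p$ near $p_0$. For such $p$, continuity of $\mu$ places $\mu(p)$ inside the interval, so $\ker\big(B_p-\mu(p)\,\Id\big)\subseteq\mathrm{Im}\,\Pi_p$; but $\dim\ker\big(B_p-\mu(p)\,\Id\big)=m_\th^N(p)=m_\th^N(p_0)=\mathrm{rank}\,\Pi_p$ by the constant multiplicity assumption, so the inclusion is an equality and $\mu(p)$ is the only eigenvalue of $B_p$ inside $\gamma$. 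Consequently $\Pi_p$ is precisely the orthogonal projection onto $N_{p,\th}M$, it is smooth of constant rank $m_\th^N$, and its image $N_\th M$ is a smooth subbundle of $NM$. The tangent statement for $T_\th M$ follows by the identical argument.
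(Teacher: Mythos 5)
Your proof is correct, and it is worth noting that the paper itself contains no argument for this lemma: the statement is quoted from \cite{j-x-y4} with the remark that it follows ``based on \cite{no}'', i.e.\ from Nomizu's theorem that an eigenvalue of constant multiplicity of a smooth family of symmetric matrices has smoothly varying eigenspaces. Your reduction is exactly the one that citation encapsulates: you package the geometry into the smooth self-adjoint field $B_p\nu=(\mc{P}_0\nu)^N$ on $NM$ (smoothness checked via matrix entries in a local orthonormal normal frame, self-adjointness from $\mc{P}_0$ being an orthogonal projection) and identify $N_{p,\th}M$ with $\ker\big(B_p-\cos^2\th(p)\,\Id\big)$ --- here you implicitly use that Jordan angle functions are by definition smooth, so $\mu=\cos^2\th$ is smooth, which is legitimate. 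But where the paper merely cites, you then prove the Nomizu-type statement from scratch by Riesz spectral projections, and you correctly locate the crux: constant multiplicity is precisely what prevents a neighboring eigenvalue from entering the contour, since the rank of $\Pi_p$ is a locally constant integer equal at $p_0$ to $m_\th^N(p_0)$, while $\dim\ker\big(B_p-\mu(p)\,\Id\big)=m_\th^N(p)$ is constant by hypothesis, forcing $\mathrm{Im}\,\Pi_p=N_{p,\th}M$ exactly. Compared with Nomizu's original, purely real-variable argument (smoothness of the eigenvalue via the characteristic polynomial and the implicit function theorem, then the constant-rank kernel of $B-\mu\,\Id$ as a smooth subbundle), your contour-integral route needs a silent complexification of the fibers to make $(z\,\Id-B_p)^{-1}$ meaningful --- harmless, since $B_p$ is self-adjoint with real spectrum, so the resulting projection is real and orthogonal and restricts to the real bundle --- and in exchange it is shorter, yields the smooth projection field (hence local frames) directly, and generalizes verbatim to any smooth family of self-adjoint endomorphisms. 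The tangent case is indeed word-for-word identical. I see no gaps.
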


In this case,
$N_\th M$ ($T_\th M$) is said to be a \textit{normal (tangent) angle space distribution} associated to $\th$.
A curve $\g: t\in (a,b)\mapsto \g(t)\in M$, all of whose tangent vectors belongs to a tangent angle space distribution,
i.e. $\dot{\g}(t)\in T_\th M$ for every $t\in (a,b)$,
is called an \textit{angle line} of $M$. More generally, an \textit{angle surface} is a connected
submanifold $S$ of $M$, such that for any $p\in S$, $T_p S\subset T_\th M$.

Now we can formulate  the definition of \textit{submanifolds with constant Jordan angles (CJA)}, the main subject of this paper.

\begin{defi}\label{def-CJA}
Let $M$ be an $n$-dimensional submanifold of $\R^{n+m}$ and $Q_0$ be a fixed $m$-plane. If every normal Jordan angle function of $M$
relative to $Q_0$ is a constant function, and $m_\th^N$ is constant on $M$ for each $\th\in \Arg^N$, then
we say $M$ has \textbf{constant Jordan angles (CJA)} relative to $Q_0$.
\end{defi}

With the aid of Lemma \ref{Jordan} and Proposition \ref{position}, one can obtain equivalent definitions of submanifolds with CJA.

\begin{pro}
For any $n$-dimensional submanifold $M$ of $\R^{n+m}$ and a fixed $m$-plane $Q_0$, the following statement are equivalent:

(i) $M$ has CJA relative to $Q_0$;

(ii) Every tangent angle function of $M$ relative to $Q_0^\bot$ is constant, and $m_\th^T$ is constant;

(iii) $\Arg(N_p M,Q_0)$ ($\Arg(T_p M,Q_0^\bot)$) is independent of $p\in M$, and the multiplicity of each normal (tangent) Jordan angle
is constant;

(iv) The relative position of $N_p M$ ($T_p M$) and $Q_0$ ($Q_0^\bot$) is independent of $p\in M$.
\end{pro}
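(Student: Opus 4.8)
The plan is to treat statement (iii) as the hub and to establish (i) $\Leftrightarrow$ (iii), (ii) $\Leftrightarrow$ (iii) and (iii) $\Leftrightarrow$ (iv) separately, reconciling the ``normal'' and ``tangent'' formulations throughout by means of Lemma \ref{Jordan}. The working objects are the smooth field of self-adjoint endomorphisms $B_p:=\big(\mc{P}_0(\cdot)\big)^N$ on the normal bundle $NM$ and its tangential analogue $C_p:=\big(\mc{P}_0^\bot(\cdot)\big)^T$ on $TM$. By the eigenvalue characterization of Jordan angles in Subsection \ref{1.2}, the eigenvalues of $B_p$ (resp.\ $C_p$) are exactly $\cos^2\th$ for $\th\in\Arg(N_pM,Q_0)$ (resp.\ $\th\in\Arg(T_pM,Q_0^\bot)$), and $m_\th^N(p)$ (resp.\ $m_\th^T(p)$) is the corresponding eigenspace dimension. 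Since $Q_0$ is fixed and $NM$, $TM$ are smooth bundles, $B_p$ and $C_p$ depend smoothly on $p$, so their eigenvalues depend continuously on $p$.

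First, (iii) $\Leftrightarrow$ (iv) is immediate from Proposition \ref{position}: to say that the pairs $(N_pM,Q_0)$ and $(N_qM,Q_0)$ have the same relative position for all $p,q$ --- i.e.\ that one is carried onto the other by a rigid motion of $\R^{n+m}$ --- is, by that proposition, equivalent to $\Arg(N_pM,Q_0)=\Arg(N_qM,Q_0)$ with equal multiplicities for all $p,q$, which is precisely (iii). The equivalence of the normal and tangent versions inside (iii) and (iv) follows from Lemma \ref{Jordan}: the nonzero Jordan angles of $(N_pM,Q_0)$ and of $(T_pM,Q_0^\bot)=\big((N_pM)^\bot,Q_0^\bot\big)$ coincide with equal multiplicities (this is also the content of the remark that a nowhere-zero function lies in $\Arg^N$ iff it lies in $\Arg^T$), while the multiplicities of the angles $0$ and $\pi/2$ are pinned down by the dimension counts recorded after Lemma \ref{Jordan}, through $m,n$ and $r(N_pM)$. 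Hence $\Arg(N_pM,Q_0)$ is $p$-independent with constant multiplicities iff $\Arg(T_pM,Q_0^\bot)$ is. Next, (iii) $\Rightarrow$ (i) is easy: if $\Arg(N_pM,Q_0)$ equals a fixed finite set $\{\th_1,\dots,\th_k\}\subset[0,\pi/2]$ for every $p$, then any normal Jordan angle function $\th$ in the sense of Definition \ref{def-CJA} takes values in this finite set; being continuous on the connected manifold $M$ it is constant, and $m_\th^N$ is constant by hypothesis, so $M$ has CJA. The same argument with $C_p$ gives (iii) $\Rightarrow$ (ii).

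The crux is the converse (i) $\Rightarrow$ (iii) (and, symmetrically, (ii) $\Rightarrow$ (iii)). Here I would show that the whole eigenvalue multiset of $B_p$ is locally constant. At any $p_0$, on a neighbourhood $U$ let $k$ be the maximal number of distinct eigenvalues of $B_p$ for $p\in U$; on the open set $U'\subset U$ where this maximum is attained, the eigenvalues separate into $k$ smooth branches $\mu_1(p),\dots,\mu_k(p)$ of locally constant multiplicity, so that $\th_i:=\arccos\sqrt{\mu_i}$ furnish smooth normal Jordan angle functions on $U'$. The constancy hypothesis in (i), applied to these branches, forces each $\mu_i$ to be constant on $U'$, while the constancy of the multiplicity functions $m_\th^N$ prevents two branches from merging as $p$ crosses $\partial U'$; an open--closed (connectedness) argument then propagates constancy of the eigenvalue multiset over all of $M$, which is exactly (iii).

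The step I expect to be genuinely delicate is producing \emph{globally defined} smooth Jordan angle functions from the local eigenvalue branches, so that hypothesis (i) can legitimately be invoked, and, relatedly, excluding eigenvalue crossings: a priori the number of distinct eigenvalues of $B_p$ is only lower semicontinuous, and branches may collide along a closed set where the naive $\arccos\sqrt{\mu_i}$ fail to be smooth. The constancy of the multiplicity functions $m_\th^N$ --- together with Lemma \ref{dis}, which upgrades a Jordan angle function of constant multiplicity to a smooth angle-space subbundle --- is precisely what is needed to rule out such crossings and to glue the branches across $M$. Making this gluing rigorous, rather than the essentially algebraic identifications relating $\Arg(N_pM,Q_0)$, $\Arg(T_pM,Q_0^\bot)$ and relative position, is where the real work lies.
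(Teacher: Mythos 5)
Your architecture is sound where it is complete, and it coincides with what the paper actually does: the paper gives no written proof of this proposition at all, only the sentence that it follows ``with the aid of Lemma \ref{Jordan} and Proposition \ref{position}''. Your handling of (iii) $\Leftrightarrow$ (iv) via Proposition \ref{position}, of the normal/tangent dictionary via Lemma \ref{Jordan} (with the multiplicities of $0$ and $\pi/2$ pinned down by the dimension counts $m_0=m-r$, $m_0^\bot=n-r$), and of (iii) $\Rightarrow$ (i), (ii) by continuity of the eigenvalues of $B_p=(\mc{P}_0(\cdot))^N$ and $C_p=(\mc{P}_0^\bot(\cdot))^T$ on a connected $M$, is precisely the content of that one-line appeal, spelled out correctly.

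The genuine gap is in your crux implication (i) $\Rightarrow$ (iii), and it is not merely a matter of polish: the step as written fails. Hypothesis (i) quantifies only over normal Jordan angle functions in the paper's sense, i.e.\ smooth functions defined on \emph{all} of $M$ with $\th(p)\in\Arg(N_pM,Q_0)$; your branches $\th_i=\arccos\sqrt{\mu_i}$ live only on the open set $U'$, so the move ``the constancy hypothesis in (i), applied to these branches'' is illegitimate, and likewise the constancy of $m_\th^N$ is hypothesized only for $\th\in\Arg^N$, so it cannot be invoked to ``prevent two branches from merging'' when those branches are not global angle functions. You flag this yourself, but flagging the hole does not fill it: under your strict reading of Definition \ref{def-CJA} you would need to show that nonconstant eigenvalue data of $B_p$ always forces the existence of a globally defined, smooth, nonconstant Jordan angle function (or a $\th\in\Arg^N$ with nonconstant multiplicity), and nothing in the proposal produces such a function --- a priori, if branches cross without admitting smooth selections, $\Arg^N$ could even be empty and (i) vacuously true while (iii) fails, so the open--closed propagation has nothing to propagate. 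The paper sidesteps this entirely by reading (i) as asserting constancy of the full angle data, so that (i) $\Leftrightarrow$ (iii) is essentially definitional and the whole proposition reduces to Lemma \ref{Jordan} plus Proposition \ref{position}; if you insist on the stricter reading, the implication (i) $\Rightarrow$ (iii) remains an open step in your argument, not a completed one.
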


\noindent {\bf Remarks:}
\begin{itemize}
\item  Let $\g$ be an arc-length parameterized curve in $\R^3$. If $\g$ is a \textit{constant angle curve},
 i.e. the unit tangent vector at every point makes a constant angle with a fixed straight line in
$\R^3$, then $\g$ is a helix, and vise versa.  Let $S$ be a smooth surface in $\R^3$, if the normal vector at every point makes a constant angle with a fixed straight line in
$\R^3$, then $S$ is said to be a \textit{constant angle surface} in $\R^3$. A surface $S$ in $\R^3$ is a constant angle surface if and only if it is locally isometric to either a cylinder, a right circular cone, or the tangential developable of a helix. Moreover, if we additionally assume $S$ to be complete, then $S$ has to be a cylinder. Recently, many geometers are interested in constant angle surfaces in other ambient spaces, e.g.
$S^2\times \R$ \cite{d-f-v-v}, $\Bbb{H}^2\times \R$ \cite{d-m}, Heisenberg group
\cite{f-m-v}, Minkowski space \cite{l-m} and product spaces \cite{d-k}. Our notion is a natural generalization of
the classical constant angle curves and surfaces.

\item If $M^n$ is a hypersurface of $\R^{n+1}$, then $M$ has CJA if and only if $M$ is a helix hypersurface \cite{d-r}. Hence the concept of submanifolds
with CJA is a natural generalization of helix hypersurfaces to higher codimensional cases. Helix hypersurfaces are  closed
related to the shadow problem (see \cite{g}) formulated by H. Wente, and another interesting motivation for the study of helix
hypersurfaces comes from the physics of interfaces of liquid crystal  (see \cite{c-d}).

\item Let $S$ be a surface in $\R^4$, then $S$ has CJA if and only if $S$ is a surface in $\R^4$ with \textit{constant principal angles with respect to
a plane}. This concept was introduced by Bayard-Di Scala-Castro-Hern\'{a}ndez in \cite{b-d-c-h}. In this paper, the authors established
a local existence theorem and classified all the complete surfaces in $\R^4$ with constant principal angles.
\end{itemize}

 Denote
\begin{equation}
r:=\sum_{\th \in \Arg^N,\th\neq 0} m_\th^N,
\end{equation}
then $r$ is a constant $\Bbb{Z}^+$-valued function on $M$. As shown above, $0\in \Arg^N$ ($0\in \Arg^T$)
if and only if $r<m$ ($r<n$), and the multiplicity of $0$ equals $m-r$ ($n-r$). Let
\begin{equation}
g^N:=|\text{Arg}^N|\qquad g^T:=|\text{Arg}^T|
\end{equation}
be the numbers of distinct Jordan angles. Note that $g^N=g^T+1$ whenever $r\equiv n<m$,
$g^T=g^N+1$ whenever $r\equiv m<n$, and otherwise $g^N=g^T$.

In conjunction with Lemma \ref{Jordan} and Lemma \ref{dis},
$NM$ and $TM$ have the following vector bundle decompositions
\begin{equation}\aligned
NM&=\bigoplus_{\th\in \text{Arg}^N}N_{\th}M,\\
TM&=\bigoplus_{\th\in \text{Arg}^T}T_{\th}M.
\endaligned
\end{equation}
In particular, if $\th\neq 0, \pi/2$, then there exists a smooth mapping $\Phi_\th: R_\th M\ra R_\th M$, where
\begin{equation}
R_\th M:=N_\th M\oplus T_\th M,
\end{equation}
such that: (i) $\Phi_\th$ keeps each fiber invariant; (ii) the length of each vector in $R_\th M$ is invariant
under $\Phi_\th$; (iii) $\Phi_\th^2=-\mb{Id}$; (iv) $\Phi_\th(N_\th M)=T_\th M$, $\Phi_\th(T_\th M)=N_\th M$; (iv) for any $\nu\in N_\th M$ and $u\in T_\th M$,
\begin{equation}\label{Phi2}\aligned
\sec\th\ \mc{P}_0\nu&=\cos\th\ \nu-\sin\th\ \Phi_\th(\nu),\\
\sec\th\ \mc{P}_0^\bot u&=\cos\th\ u-\sin\th\ \Phi_\th(u).
\endaligned
\end{equation}
$\Phi_\th$ is called the \textit{anti-involution} associated to $\th$.

\subsection{Minimal submanifolds with CJA and the Bernstein problem}
The concept of CJA submanifolds that we have just introduced arises from our systematic investigation of
 the Bernstein problem in higher codimension. We now wish to explain this connection.

The classical Bernstein theorem \cite{be} states that any entire minimal graph in $\R^3$ has to be affine linear.
This result has been extended by J. Simons \cite{Si} to such entire minimal graphs in $\R^{n+1}$ for $n\leq 7$, whereas
Bombieri-de Giorgi-Giusti \cite{b-g-g} constructed counterexamples in higher dimensions. But for any dimensions, there is a weak
version of the Bernstein type theorem, obtained by  J. Moser \cite{m} who proved that any entire solution $f:\R^n\ra \R$ to the minimal surface equation
\begin{equation}\label{min-eq}
\text{div}\Big(\f{\n f}{\sqrt{1+|\n f|^2}}\Big)=0
\end{equation}
has to be affine linear, provided that
\begin{equation}v:=\sqrt{1+|\n f|^2}
\end{equation}
is a bounded function.  $v$ is a significant quantity here for various reasons. Firstly,
the boundedness of $v$ ensures that (\ref{min-eq}) is a uniformly elliptic equation, so that a Bernstein type result can be obtained by  Moser's iteration. Secondly, for any $f:\R^n\ra \R$,
$x=(x^1,\cdots,x^n)\in \R^n\mapsto (x,f(x))\in \text{graph } f$ is a global coordinate chart of the graph
of $f$, and a straightforward calculation shows that the volume form of $\text{graph }f$ is $vdx^1\w \cdots\w dx^n$,
i.e.  $v$ equals the radio of the volume form of  $\text{graph }f$ and the coordinate plane. Thirdly, $v$ has a close
relationship with Jordan angles. A direct computation shows
\begin{equation}
\nu:=w(-\pd{f}{x^1},\cdots,-\pd{f}{x^n},1)\qquad \text{where }w:=v^{-1}
\end{equation}
is a unit normal vector field on $\text{graph }f$. Thus the angle between $\nu$ and the $x^{n+1}$-axis is $\arccos w$,
which is smaller than an acute angle whenever the $v$-function is bounded. Therefore, Moser's theorem can be
restated as: Let $M$ be a complete minimal hypersurface in $\R^{n+1}$ and $\th_0\in (0,\pi/2)$. If the angle between the normal vector
and $x^{n+1}$-axis is smaller than $\th_0$ everywhere, then $M$ has to be an affine $n$-plane.

Now we consider an $n$-dimensional entire minimal graph $M$ in $\R^{n+m}$, generated by a smooth vector-valued function
$f:\R^n\ra \R^m$
$$x=(x^1,\cdots,x^n)\mapsto f(x)=(f^1(x),\cdots,f^m(x)).$$
Then $f$ satisfies the minimal surface equations
\begin{equation}
\aligned
\sum_i \pf{x^i}(v g^{ij})=0\qquad&\forall j=1,\cdots,n,\\
\sum_{i,j}\pf{x^i}(v g^{ij}\pd{f^\a}{x^j})=0\qquad &\forall \a=1,\cdots,m.
\endaligned
\end{equation}
Here $g_{ij}dx^i dx^j$ is the induced metric on $M$, $(g^{ij})$ denotes the inverse matrix of $(g_{ij})$,
and $vdx^1\w \cdots\w dx^n:=\det(g_{ij})^{1/2}dx^1\w\cdots\w dx^n$ is the volume form of $M$. More precisely,
\begin{equation}
v=\Big[\det\big(\de_{ij}+\sum_\a \pd{f^\a}{x^i} \pd{f^\a}{x^j}\big)\Big]^{1/2}.
\end{equation}
Similarly to the case of codimension 1, the $v$-function has a close relationship with Jordan angles. At any point $p\in M$,
denote by
\begin{equation}
0\leq \th_1\leq\th_2\leq \cdots\leq \th_m<\pi/2
\end{equation}
the Jordan angles between $N_p M$ and the coordinate $m$-plane,
then a calculation shows (see \cite{x-y1}\cite{j-x-y2})
\begin{equation}\label{W}
v=\prod_{i=1}^m \sec\th_m.
\end{equation}
We note that
\begin{equation}\label{w}
w:=v^{-1}=\prod_{i=1}^m \cos\th_m
\end{equation}
is the inner product of the normal $m$-plane and the coordinate $m$-plane. Here all the $m$-planes are viewed as vectors in
a Euclidean space of larger dimension, via Pl\"{u}cker embedding (see \cite{j-x-y3}).

It is natural to ask whether Moser's theorem can be generalized to the higher codimensional case. In other words,
given an entire minimal graph $M=\text{graph }f\subset \R^{n+m}$ with $f:\R^n\ra \R^m$, does the boundedness of the
$v$-function ensure that $M$ has to be an affine $n$-plane? The answer is 'Yes' for the cases of dimension 2 \cite{c-o}\cite{j-x-y5}
and dimension 3 \cite{b}\cite{fc}, but it is 'No' for dimension 4, according to the works
of Lawson-Osserman \cite{l-o} and Harvey-Lawson \cite{h-l}.

Let us explain the geometric reason for this fact.
Let $\Bbb{O}$ and $\Bbb{H}$ denote the octonions and quaternions, respectively. We have $\Bbb{O}=\Bbb{H}\oplus \Bbb{H}e$, with $e$ a unit element orthogonal to $\Bbb{H}$,
and for any $a,b,c,d\in \H$,
\begin{equation}
 (a+be)(c+de)=(ac-\bar{d}b)+(da+b\bar{c})e.
\end{equation}
Denote $\text{Sp}_1:=\{q\in \Bbb{H}:|q|=1\}$. Assume $a\in \Im\H$ is a fixed unit element, then
\begin{equation}\label{l-o-cone}
M(a):=\{r\big[(\sqrt{5}/2) qa \bar{q}+\bar{q}e\big]:q\in \text{Sp}_1,r\in \Bbb{R}^+\}
\end{equation}
is a $4$-dimensional cone in $\Im\O$, which is the graph of the function $\eta:\H\backslash \{0\}\ra \Im\H\backslash\{0\}$
\begin{equation}
\eta(x)=\f{\sqrt{5}}{2|x|}\bar{x}\ep x.
\end{equation}
Here $\ep\in \Im\H$ and $|\ep|=1$.
Note that $\eta$ is a \textit{cone-like function}, i.e. $\eta(tx)=t\eta (x)$ for any $t$ and $x$.
It was discovered by Lawson-Osserman \cite{l-o} that $\eta$ is a Lipschitz solution to the non-parametric minimal
surface equations that is not $C^1$, and a straightforward calculation shows the $v$-function is always $9$ on
$M(a)$. Afterwards, Harvey-Lawson \cite{h-l} constructed a family of $4$-dimensional entire minimal graphs in $\Im\O$; the tangent cone
at infinity of each one is just the Lawson-Osserman's cone, and the $v$-function takes value in $[1,9)$. Therefore, Moser's theorem cannot
generalize to all higher codimensional cases.

Now we further explore the geometric properties of Lawson-Osserman's cone via Jordan angles.

\begin{pro}\label{lo-cone}
Lawson-Osserman's cone $M(a)$ is a $4$-dimensional submanifold in $\Im \O$ with CJA relative to $\Im \H$, and $\Arg^N=\{\arccos(2/3),\arccos(\sqrt{6}/6)\}$,
$\Arg^T=\{\arccos(2/3),\arccos(\sqrt{6}/6),0\}$.
\end{pro}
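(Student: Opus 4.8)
The plan is to realize $M(a)$ as a graph and reduce everything to the singular values of a single differential. First I would observe that, under the splitting $\Im\O=\Im\H\oplus\H e$, the $\H e$-component of a cone point $r[(\sqrt5/2)qa\bar q+\bar q e]$ is $r\bar q e$, which sweeps out all of $(\H\setminus\{0\})e$ as $(q,r)$ ranges over $\text{Sp}_1\times\R^+$. Hence $M(a)\setminus\{0\}$ is exactly the graph of $\eta$ over the coordinate $4$-plane $Q_0^\bot=\H e$, with fiber the reference $3$-plane $Q_0=\Im\H$. Identifying $\H e\cong\H$ via $ye\mapsto y$ and setting $x=r\bar q$, a short computation confirms the stated formula $\eta(x)=\f{\sqrt5}{2|x|}\bar x\,\ep\,x$ with $\ep=a$. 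This is the key structural reduction: the normal reference plane coincides with the image plane of the graph, and the coordinate plane with $Q_0^\bot$.

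Next I would recall the relation between the normal Jordan angles of a graph and the singular values of its defining map. For $\eta:\H\to\Im\H$ with Jacobian $J=d\eta$, the normal space is spanned by the vectors $E_\a-\sum_i J_{\a i}e_i$, where $\{e_i\}$ and $\{E_\a\}$ are orthonormal bases of the coordinate plane and of $Q_0$; the $Q_0$-projection of the $\a$-th such vector is simply $E_\a$. A direct Gram-matrix computation then shows that $\mc P\circ\mc P_0$ on $N_pM$ has $\cos^2\th_\a$ equal to the eigenvalues of $(\Id+JJ^T)^{-1}$, so each normal Jordan angle satisfies $\tan\th_\a=\sigma_\a$, the $\a$-th singular value of $J$. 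In particular $M$ has CJA relative to $Q_0$ as soon as these singular values are constant along $M$, and the product formula \eqref{W} should reproduce $v\equiv9$ as a consistency check.

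The core of the argument is the computation of $d\eta$ in a form that is manifestly independent of the base point. Since $\eta$ is homogeneous of degree one, $d\eta_{\la x}=d\eta_x$, so I may restrict to $x\in\text{Sp}_1$. Writing the variation as $h=xk$ (an isometry of $\H$ for $|x|=1$) and abbreviating $\td\ep:=\bar x\,\ep\,x\in\Im\H$, the product rule collapses, using $pq-qp=2\,p\times q$ for $p,q\in\Im\H$, to
\[
d\eta[h]=\f{\sqrt5}{2}\big(\Re(k)\,\td\ep+2\,\td\ep\times\Im(k)\big).
\]
Choosing an oriented orthonormal frame $\{\td\ep,f_1,f_2\}$ of $\Im\H$ with $\td\ep\times f_1=f_2$, this map sends $1\mapsto(\sqrt5/2)\td\ep$, $\td\ep\mapsto0$, $f_1\mapsto\sqrt5\,f_2$, $f_2\mapsto-\sqrt5\,f_1$. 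Its nonzero singular values are therefore $\sqrt5/2$ (once) and $\sqrt5$ (twice), independently of $x$; the vanishing of $d\eta$ on the $\td\ep$-direction is the source of the extra tangent angle below.

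I would finish by reading off the angles. From $\tan\th=\sqrt5/2$ and $\tan\th=\sqrt5$ one gets $\cos\th=2/3$ and $\cos\th=\sqrt6/6$, so $\Arg^N=\{\arccos(2/3),\arccos(\sqrt6/6)\}$ with multiplicities $1$ and $2$, and the constancy just established yields CJA. For the tangent angles, Lemma \ref{Jordan} identifies the nonzero Jordan angles between $T_pM$ and $Q_0^\bot$ with those between $N_pM$ and $Q_0$, contributing total multiplicity $1+2=3$; since $\dim T_pM=4>3$, the remaining direction forces $0\in\Arg^T$ with multiplicity one, whence $\Arg^T=\{\arccos(2/3),\arccos(\sqrt6/6),0\}$. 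I expect the main obstacle to be the quaternionic simplification of $d\eta$: the substitution $h=xk$ is precisely what turns an $x$-dependent expression into the clean, base-point-free formula above, and without it the constancy of the singular values—hence CJA itself—would remain hidden.
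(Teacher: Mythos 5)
Your proof is correct, but it takes a genuinely different route from the paper. The paper parametrizes $M(a)$ by $F:\text{Sp}_1\times\R^+\to M(a)$, pushes forward $\partial_r$ and the right-invariant fields $a_1,a_2,a_3\in\text{sp}_1\cong\Im\H$, assembles an explicit orthonormal tangent frame $\{e_1,e_2,e_3,e_4\}$ at an arbitrary point, and reads off each angle by computing $(\mc{P}_0^\bot e_i)^T$ directly, e.g. $\lan \mc{P}_0^\bot e_1,\mc{P}_0^\bot e_j\ran=(4/9)\de_{1j}$ gives $\arccos(2/3)$. You instead exploit the graph description: identify $M(a)\setminus\{0\}$ with $\text{graph }\eta$, invoke the standard fact that the Jordan angles of a graph relative to the image/coordinate planes satisfy $\tan\th_\a=\si_\a$ for the singular values $\si_\a$ of the differential (the Gram-matrix argument via $(\Id+JJ^T)^{-1}$ is sound, and is the same fact underlying the paper's formula (\ref{W})), and then trivialize $d\eta$ by homogeneity plus the left-translation substitution $h=xk$, obtaining the base-point-free map $k\mapsto(\sqrt5/2)(\Re(k)\,\td\ep+2\,\td\ep\times\Im(k))$ with constant singular values $\sqrt5/2,\sqrt5,\sqrt5,0$; I verified this computation ($\bar k\td\ep+\td\ep k-\Re(k)\td\ep$ collapses exactly as you claim), and your derivation of $\Arg^T$ from $\Arg^N$ via Lemma \ref{Jordan} and the dimension count is the same bookkeeping the paper does in reverse. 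Each approach buys something: yours is arguably more elementary and self-contained (it even yields $v\equiv(3/2)\cdot\sqrt6\cdot\sqrt6=9$ as a check, and is closer in spirit to the original Hopf-map proof in the Appendix of \cite{j-x-y3} that the paper's remark mentions), while the paper's $\text{Sp}_1$-equivariant frame computation is deliberately chosen because it produces the explicit angle directions and the observation that the $\th_1$-angle lines are the rays of the cone, which is precisely the structure exploited later in Section \ref{co-cja} (Steps III--VII of Proposition \ref{p2}); your singular-value argument establishes the proposition but would not feed into those later steps as directly.
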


\noindent \textbf{Remark.} Proposition \ref{lo-cone} was firstly proved in the Appendix of \cite{j-x-y3}, and the calculation was based on the complex form
of the Hopf map from $S^3$ to $S^2$. Now, we shall give another proof, which is based on the fact that $M(a)$ is a
$\text{Sp}_1$-invariant manifold and has a close relationship with the argument in Section \ref{co-cja}.

\begin{proof}
Denote $F:\text{Sp}_1\times \R^+\ra M(a)$
\begin{equation}
 (q,r)\mapsto r\big[(\sqrt{5}/2) qa \bar{q}+\bar{q}e\big].
\end{equation}
Let $p_0=F(q_0,R_0)$ be an arbitrary point in $M(a)$. We shall compute the Jordan angles between $T_{p_0}M(a)$ and $\H e$.

Let $\text{sp}_1$ be the Lie algebra associated to $\text{Sp}_1$, which can be seen as the linear space constisting of right-invariant
vector fields on $\text{Sp}_1$. It is well-known that $\text{sp}_1$ is isomorphic to $\Im\H$, and the isomorphism is given by
$\chi: \Im \H\ra \text{sp}_1$
\begin{equation}
 b\mapsto V=\chi(b)\text{ with }V_q=\f{d}{dt}\Big|_{t=0}e^{tb}q.
\end{equation}
As a matter of convenience, $b$ and $\chi(b)$ are regarded to be same in the sequel. Then at $p_0$,
$$F_* \p_r=(\sqrt{5}/2)q_0 a\bar{q}_0+\bar{q}_0 e=(\sqrt{5}/2)a_1+\ep$$
with
$$a_1:=q_0 a\bar{q}_0,\quad \ep:=\bar{q}_0 e$$
and
$$\aligned
F_* b&=\f{d}{dt}\Big|_{t=0} R_0\big[(\sqrt{5}/2)(e^{tb}q_0)a(\ol{e^{tb}q_0})+(\ol{e^{tb}q_0})e)\big]\\
&=\f{d}{dt}\Big|_{t=0}R_0\big[(\sqrt{5}/2)e^{tb}(q_0 a\bar{q}_0)e^{-tb}+(\bar{q}_0 e^{-tb})e\big]\\
&=\f{d}{dt}\Big|_{t=0}R_0\big[(\sqrt{5}/2)e^{tb}a_1e^{-tb}+e^{-tb}\ep\big]\\
&=R_0\big[(\sqrt{5}/2)(ba_1-a_1 b)-b\ep\big].
\endaligned$$
Let $a_2$ be a unit vector in $\Im \H$ that is orthogonal to $a_1$
and denote $a_3:=a_1a_2$. Then $\{a_1,a_2,a_3\}$ is an orthonormal basis of $\Im \H$, satisfying $a_1^2=a_2^2=a_3^2=-1$, $a_1 a_2=a_3=-a_2 a_1$,
$a_2a_3=a_1=-a_3a_2$ and $a_3a_1=a_2=-a_1a_3$, then
$$\aligned
R_0^{-1}F_*a_1&=(\sqrt{5}/2)(a_1^2-a_1^2)-a_1\ep=-a_1\ep,\\
R_0^{-1}F_*a_2&=(\sqrt{5}/2)(a_2 a_1-a_1 a_2)-a_2\ep=-\sqrt{5}a_3-a_2\ep,\\
R_0^{-1}F_*a_3&=(\sqrt{5}/2)(a_3 a_1-a_1 a_3)-a_3\ep=\sqrt{5}a_2-a_3\ep.
\endaligned$$
Denote
\begin{equation}
 \aligned
e_1:=&(2/3)F_* \p_r=(\sqrt{5}/3)a_1+(2/3)\ep,\\
e_2:=&(\sqrt{6}R_0)^{-1}F_* a_2=-(\sqrt{30}/6)a_3-(\sqrt{6}/6)a_2\ep,\\
e_3:=&(\sqrt{6}R_0)^{-1}F_* a_3=(\sqrt{30}/6)a_2-(\sqrt{6}/6)a_3\ep,\\
e_4:=&R_0^{-1}F_* a_1=-a_1\ep.
\endaligned
\end{equation}
Then $\{e_1,e_2,e_3,e_4\}$ is an orthonormal basis of $T_{p_0}M(a)$.

Let $\mc{P}_0$, $\mc{P}_0^\bot$ be
the orthogonal projections of $\Im \O=\Im \H\oplus \H e$ into $\Im \H$ and $\H e$,
respectively, then
$$\lan (\mc{P}_0^\bot e_1)^T,e_j\ran=\lan \mc{P}_0^\bot e_1,e_j\ran=\lan \mc{P}_0^\bot e_1,\mc{P}_0^\bot e_j\ran=(4/9)\de_{1j}$$
which implies $(\mc{P}_0^\bot e_1)^T=(4/9)e_1$ and hence $e_1$ is a tangent angle direction associated to
$\th_1:=\arccos (2/3)$. Note that $e_1$ is the direction of the ray going through $p_0$.
Similarly, one can prove that $e_2, e_3$ are both tangent angle directions associated to
$\th:=\arccos (\sqrt{6}/6)$, and $e_4$ is a tangent angle direction associated to $0$.
Since $p_0$ can be taken arbitrarily, $M(a)$ has CJA relative to $\Im \H$, and
$\text{Arg}^T=\{\th_1,\th,0\}$, $\text{Arg}^N=\{\th_1,\th\}$. Moreover,
an arbitrary angle line with respect to $\th_1$ is a ray of $M(a)$, and vise versa.
\end{proof}

In \cite{l-o}, Lawson-Osserman raised the following question: What
is the largest constant $C$ such that an entire minimal graph of
arbitrary dimension and codimension with  $v\leq C$ has to  be
affine linear? Up to now, the best positive answer to this question
in a successive series of achievements by several mathematicians
(see \cite{h-j-w}, \cite{j-x}, \cite{x-y1}, \cite{j-x-y2}) is gotten
in \cite{j-x-y4}, which says that for any entire minimal graph
$M=\{(x,f(x)):x\in \R^n\}\subset \R^{n+m}$ with $f:\R^n\ra \R^m$, if
$v\leq 3$, then $M$ has to be an affine $n$-plane. But there is
still a large quantitative gap between 3 and 9, that is, between
known Bernstein type theorems and the counterexamples.

Lawson-Osserman's problem can be viewed as the first gap problem of the $v$-function for entire minimal graphs of higher codimension.
To study the gap phenomena of the $v$-function, it is natural to consider minimal graphs whose $v$-function is constant.
Observing that the $v$-function is a function of all Jordan angle functions (see (\ref{W})), the $v$-function on any minimal
graph with CJA relative to the coordinate plane is constant.
Proposition \ref{lo-cone} shows the Lawson-Osserman's cone $M(a)$ has CJA relative to the imaginary quaternions, but unfortunately it is not a complete submanifold.
So one can propose the following problems:

\begin{problem}
Do there exist nonflat entire minimal graphs whose $v$-function is constant?
\end{problem}

\begin{problem}\label{problem1}
Let $S_v$ and $S_v^0$ be sets consisting of some real numbers strictly bigger than 1. $v_0\in S_v$ if and only if there exists a nonlinear cone-like
map $f:\R^n\backslash \{0\}\ra \R^m\backslash \{0\}$, such that $M=\text{graph } f$ is a minimal graph whose $v$-function always equals
$v_0$. Similarly, $v_0 \in S_v^0$ if and only if there exists a nonlinear cone-like
map $f:\R^n\backslash \{0\}\ra \R^m\backslash \{0\}$, such that $M=\text{graph } f$ is a minimal graph with CJA relative to $\R^m$. Are $S_v$ and $S_v^0$ discrete sets?
\end{problem}

\begin{problem}\label{problem2}
Let $S_{v,loc}$ and $S_{v,loc}^0$ be sets consisting of some real numbers strictly bigger than 1. $v_0\in S_{v,loc}$ if and only if there exists a nonlinear
vector-valued function $f:D\subset \R^n \ra \R^m$ ($D$ is an open domain), such that $M=\text{graph } f$ is a minimal graph whose $v$-function always equals
$v_0$. Similarly, $v_0\in S_{v,loc}^0$ if and only if there exists a nonlinear
vector-valued function $f:D\subset \R^n \ra \R^m$, such that $M=\text{graph }f$ is a minimal graph with CJA relative to $\R^m$. Are $S_{v,loc}$ and $S_{v,loc}^0$ discrete sets?
\end{problem}

\begin{problem}
Does any minimal graph in Euclidean space with constant $v$-function have to be a submanifold with CJA?
\end{problem}

Obviously $S_v^0\subset S_v$, $S_{v,loc}^0\subset S_{v,loc}$, $S_v\subset S_{v,loc}$, $S_v^0\subset S_{v,loc}^0$ and
Problem \ref{problem2} can be viewed as a local version of Problem \ref{problem1}. For Problem \ref{problem1}, the known facts include $(1,3]\notin S_v$ (see \cite{j-x-y4}) and $9\in S_v^0$.

Problem \ref{problem1} is quite similar to Chern's conjecture, intrinsic rigidity problem in the theory of minimal submanifolds, which claims
that if the squared length of the second fundamental form (denoted by $|B|^2$) of a compact minimal submanifold in the unit Euclidean sphere is constant, then
the value should be contained in a discrete set (see \cite{c-c-k}).

\subsection{Submanifolds in spheres with CJA}
If $M$ is an $n$-dimensional cone in $\R^{n+m}$, then the intersection of $M$ and the unit sphere gives
an $(n-1)$-dimensional submanifold $N$ in $S^{n+m-1}$. $M$ is said to be the cone generated by $N$, i.e.
$M=CN$. As pointed out by J. Simons \cite{Si}, the geometric properties of $N$ are closed related to those of the cone
$CN$. Firstly, $CN$ has parallel mean curvature in $\R^{n+m}$ if and only if $N$ is a minimal submanifold in $S^{n+m-1}$
(see \cite{x0} p.64). Noting that $CN$ is a linear subspace if and only if $N$ is a totally geodesic subsphere,
the Bernstein problem for minimal submanifolds in Euclidean space can be transferred to the spherical Bernstein problem
for minimal submanifolds in the sphere, in the framework of the geometric measure theory (see \cite{a}, \cite{f}).

For any $p\in N$, denote by $T_p N$ and $N_p N$ the tangent $(n-1)$-plane and the normal $m$-plane of $N$ at $p$, respectively,
then
$$T_p S^{n+m-1}=T_p N\oplus N_p N.$$
Along the ray going through $p$, the tangent $n$-planes and the normal $m$-planes of $CN$ are both constant, and
\begin{equation}\label{sphere}
T_p (CN)=T_p N\oplus \{t\mathbf{X}(p):t\in \R\},\quad N_p (CN)=N_p N.
\end{equation}
Here $\mathbf{X}(p)$ denotes the position vector of $p$ in $\R^{n+m}$.

Let $Q_0$ be a fixed $m$-plane in $\R^{n+m}$, if $\Arg(N_p N,Q_0)$
is independent of $p\in N$, and the multiplicity of each normal Jordan angle is constant, then we say
$N$ is a \textbf{submanifold in a sphere with constant Jordan angles (CJA)} relative to $Q_0$. By
(\ref{sphere}), $N$ has CJA if and only if the cone $CN$ generated by $N$ is a submanifold in $\R^{n+m}$
with CJA.

Thereby, Problem \ref{problem2}  can be restated as follows:

\begin{problem}
Let $S_w$ and $S_w^0$ be sets consisting of some real numbers taking values in $(0,1)$. $w_0\in S_w$
if and only if there exists an $(n-1)$-dimensional compact minimal submanifold $N$ in $S^{n+m-1}$,
that is non-totally geodesic, such that its $w$-function always equals $w_0$, i.e. the inner product
of each normal $m$-plane and a fixed $m$-plane $Q_0$ is $w_0$. Similarly, $w_0\in S_w^0$
if and only if there exists an $(n-1)$-dimensional compact minimal and non-totally geodesic submanifold $N$ in $S^{n+m-1}$,
which has CJA relative to a fixed $m$-plane, such that its $w$-function always equals $w_0$. Are $S_w$ and $S_w^0$ discrete?
\end{problem}

\noindent \textbf{Remark.}  Due to (\ref{w}),
$$S_w=\{w_0=v_0^{-1}:v_0\in S_v\},\quad S_w^0=\{w_0=v_0^{-1}:v_0\in S_v^0\}.$$

There is a long way  to resolving these problems. In this paper, we only consider CJA submanifolds with a small number of distinct Jordan angles (i.e. $g^N$ and $g^T$).

\subsection{Main results}
This paper will be organized as follows.

In Section \ref{s4}, the second fundamental form $B$ of submanifolds with CJA in Euclidean space shall be studied. At first, differentiating the Jordan angle
functions not only gives some nullity properties of $B$, but also reveals the relationship between the induced tangent (normal) connection
and the second fundamental form. Taking the covariant derivative of the formulas obtained in the previous step, one can compute
some components of $\n B$ in terms of $B$. With the aid of the Codazzi equations, we can derive
a constraint equation for the second fundamental form (see Lemma \ref{sec30}), which is nontrivial when the multiplicity of a tangent Jordan angle function $\th\in (0,\pi/2)$, i.e. $m_\th^T$, is
strictly larger than 1.
This conclusion will play an important part in Section \ref{co-cja}. Based on these formulas, it is easy to get some vanishing theorems for the second fundamental form $B$
of submanfolds with CJA, including the following one.

\begin{thm}\label{cja1}
Let $f$ be an $\R^m$-valued function on an open domain $D\subset \R^n$. If $M=\text{graph }f$ is a minimal submanifold with CJA relative
to $\R^m$, and $g^N,g^T\leq 2$, then $f$ has to be affine linear, i.e. $M$ has to be an affine $n$-plane.

\end{thm}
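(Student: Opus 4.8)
The plan is to prove that the second fundamental form $B$ of $M$ vanishes identically; since $M$ is a graph, $B\equiv 0$ forces the Gauss map to be constant and hence $M$ to be an affine $n$-plane. Throughout I use the angle-space decompositions $TM=\bigoplus_{\th\in\Arg^T}T_\th M$ and $NM=\bigoplus_{\th\in\Arg^N}N_\th M$ furnished by Lemma \ref{dis}, together with the fibrewise anti-involutions $\Phi_\th$ of $R_\th M=N_\th M\oplus T_\th M$ from (\ref{Phi2}). The graph hypothesis means precisely that no normal (equivalently, by Lemma \ref{Jordan}, no tangent) Jordan angle equals $\pi/2$, so $\Arg^N,\Arg^T\subset[0,\pi/2)$.

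First I would carry out the combinatorial reduction forced by $g^N,g^T\le 2$. Writing $k$ for the number of distinct nonzero Jordan angles (the same for $TM$ and $NM$ by Lemma \ref{Jordan}) and $r$ for their total multiplicity (equal for the two bundles), one has $g^N=k$ if $r=m$ and $g^N=k+1$ if $r<m$, and similarly $g^T=k$ or $k+1$ according as $r=n$ or $r<n$. Hence $k\le 2$, and if $k=2$ then both increments must be absent, i.e. $r=m=n$ and $0\notin\Arg^N$. The upshot is that the hypotheses exclude exactly the configuration realized by the Lawson--Osserman cone (two nonzero angles with $m<n$, where $g^T=3$ by Proposition \ref{lo-cone}), and leave only: (i) $\Arg^N=\{0\}$, where $N_pM=Q_0$ is constant and $M$ is already affine; (ii) a single nonzero angle $\th_0$ (with or without the angle $0$); and (iii) two nonzero angles with $n=m$ and no zero angle.

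Next I would derive the structure equations that govern $B$. Fixing a nonzero angle $\th$ and a local section $u\in\Gamma(T_\th M)$, the relation (\ref{Phi2}) reads $\mc{P}_0^\bot u=\cos^2\th\,u-\sin\th\cos\th\,\Phi_\th(u)$ with $\Phi_\th(u)\in N_\th M$. Differentiating along a tangent field $X$, using $\bn\mc{P}_0^\bot=0$ and the constancy of $\th$, and inserting the Gauss and Weingarten formulas $\bn_X u=\n_X u+B(X,u)$ and $\bn_X\Phi_\th(u)=-A_{\Phi_\th(u)}X+\n^\bot_X\Phi_\th(u)$, I would then take tangential and normal parts. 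I expect this to yield two families of identities: nullity relations confining $B$ to the block structure of the angle decomposition (so that $B(T_\th M,T_\si M)$ lands in $N_\th M\oplus N_\si M$, with components into other blocks forced to vanish), and identities expressing the tangential and normal connections on the angle subbundles in terms of $B$ and the $\Phi_\th$. Substituting these into the Codazzi equations and imposing the minimality condition $\sum_i h^\a_{ii}=0$ should produce the trace constraint of Lemma \ref{sec30}, which is nontrivial exactly when some $m_\th^T>1$.

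Finally I would treat the admissible configurations. Case (i) is immediate. In case (ii), with a single nonzero angle $\th_0$, the anti-involution $\Phi_{\th_0}$ identifies the $\th_0$-tangent and $\th_0$-normal blocks, and I would combine the connection-versus-$B$ identities with the Codazzi trace constraint to force all components of $B$ to vanish, handling the multiplicity-one situation directly and appealing to Lemma \ref{sec30} when $m_{\th_0}^T>1$. Case (iii), two nonzero angles with $n=m$ and no zero angle, is where I expect the main difficulty: both $\Phi_{\th_1}$ and $\Phi_{\th_2}$ now couple $TM$ and $NM$ fully, the largest number of components of $B$ survive the nullity relations, and one must play the two anti-involution identities against the Codazzi constraint across both blocks simultaneously. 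Carrying this out should again give $B\equiv 0$. In each case $B$ vanishes identically, so $M$ is totally geodesic; being a graph, it is then an affine $n$-plane, as claimed.
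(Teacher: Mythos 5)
Your overall architecture is the paper's: the combinatorial reduction via $g^N=k+[r<m]$, $g^T=k+[r<n]$ reproduces exactly the case division of Theorem \ref{tri} (i)--(iv), the observation that the graph hypothesis banishes $\pi/2$ from both $\Arg^N$ and $\Arg^T$ is how the paper's corollary deduces the theorem, and the strategy of differentiating (\ref{Phi2}), feeding the result into Codazzi, and invoking minimality to get $B\equiv 0$ is the intended one. However, your first ``expected'' output is false: differentiating the constant-angle relations does \emph{not} confine $B$ to the blocks $N_\th M\oplus N_\si M$. What actually comes out is the antisymmetry of Lemma \ref{sec2} and the statement \emph{opposite} to your diagonal claim, namely Lemma \ref{sec6}: $B(T_{p,\th}M,T_{p,\th}M)\perp N_{p,\th}M$ for $\th\in(0,\pi/2)$; and instead of the off-block components of $B$ vanishing, they are generically nonzero and are what determine the connection coefficients $S_{\th\si}$ and $S^N_{\th\si}$ (Lemmas \ref{sec5} and \ref{sec7}). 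Your block claim is refuted by the Lawson--Osserman cone itself, which is CJA and has $h=h_{1,22}\neq 0$, a nonzero component of $B(T_\th M,T_\th M)$ in $N_{\th_1}M$ with $\th_1\neq\th$ (see the proof of Proposition \ref{p2}); so no such nullity structure can follow from the CJA condition.

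The deeper gap is that the step which actually kills $B$ is asserted rather than argued. Note first a misattribution: the trace constraint (\ref{sec19}) of Lemma \ref{sec30} follows from the Codazzi equations alone; minimality is not used there. Minimality (or parallel mean curvature) enters afterwards, through a positivity mechanism your sketch never touches: by Lemma \ref{sec2} one has $U_{\th\si}(v,w,v,w)=|(A^{\Phi_\th(v)}w)_\si|^2\geq 0$, and the formulas (\ref{sec9}) and (\ref{sec20}) for $\lan(\n_v B)_{ww},\Phi_\th(v)\ran$ are arranged so that tracing $0=\lan \n_v H,\Phi_\th(v)\ran$ yields a $\k$-weighted sum of squares whose coefficients have a single sign (e.g. $\k_{\th_0 0}\neq 0$ in cases (ii), (iii)), while in the two-nonzero-angle case one instead expands $0=|H^{\th_2}|^2=\sum_{i,j}\big(3|(A^{\Phi_{\th_1}(e_j)}e_i)_{\th_2}|^2+|B^{\th_2}_{e_ie_j}|^2\big)$ over an orthonormal basis of $T_{p,\th_1}M$, using $B^{\th_2}(T_{\th_2},T_{\th_2})=0$ from Lemma \ref{sec6}. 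This sign bookkeeping \emph{is} the proof: it cannot be dispensed with, because non-minimal CJA submanifolds realizing precisely your admissible angle configurations exist and are not totally geodesic (right circular cones and tangent developables of helices, recalled in the introduction, have $\Arg^N=\{\th_0\}$, $\Arg^T=\{\th_0,0\}$ with $\th_0\in(0,\pi/2)$), so ``force all components of $B$ to vanish'' cannot follow from the structure equations alone. In outline your route is the paper's, but of the two named ingredients, the nullity structure is wrong and the vanishing mechanism is missing.
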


Note that the example of Lawson-Osserman's cone implies that
the condition '$g^N,g^T\leq 2$' in Theorem \ref{cja1} cannot be omitted.

In \cite{h-l}, Harvey-Lawson introduced a new concept of coassociative submanifolds,
as an important example of calibrated geometries, and showed that Lawson-Osserman's cone
is a coassociative submanifold. Observing that coassociative submanifolds constitute an important class
of 4-dimensional minimal submanifolds in $\R^7$, it is natural to study the structure
of coassociative submanifolds with CJA, which is the main topic of Section \ref{co-cja}.
With the aid of the algebraic properties of octonions, one can obtain several interesting conclusions on
  the Jordan angles and the second fundamental form of
coassociative submanifolds. In conjunction with Lemma \ref{sec30}, a structure theorem
for coassociative submanifolds with CJA is deduced as follows.

\begin{thm}
Let $f$ be a smooth function from an open domain $D\subset \H$ into $\Im \H$. If $M=\text{graph }f$ is
a coassociative submanifold with CJA relative to $\Im \H$, and $g^N\leq 2, g^T\leq 3$,
then $f$ is either an affine linear function or
$f(x)=\eta(x-x_0)+y_0$,
where $x_0\in \H$, $y_0\in \Im \H$ and
$$\eta(x)=\f{\sqrt{5}}{2|x|}\bar{x}\ep x$$
with $\ep$ an arbitrary unit element in $\Im \H$. In other words, $M$ is an affine 4-plane or a translate of an open subset of the Lawson-Osserman's cone.
\end{thm}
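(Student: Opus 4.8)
The plan is to run the argument entirely through the pair $(g^N,g^T)$ together with the constraint equation for $B$ from Lemma \ref{sec30}, using coassociativity only to pin down the admissible Jordan angle data. Since $M$ is coassociative it is calibrated, hence minimal; moreover its tangent spaces are coassociative $4$-planes and its normal spaces are associative $3$-planes, while the reference $\Im\H$ is associative and $Q_0^\bot=\H e$ is coassociative. Because $n=4>m=3$ we have $r\le m$, and the counting rule from the introduction splits the problem. If $r<3$ then $0\in\Arg^N\cap\Arg^T$ and $g^N=g^T\le 2$; here $M$ is a minimal graph with CJA relative to $\Im\H=\R^3$ and at most two distinct normal and tangent angles, so Theorem \ref{cja1} already forces $f$ to be affine linear. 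Thus the whole content lies in the remaining case $r=m=3$, where $0\notin\Arg^N$, $m_0^T=n-r=1$, and $g^T=g^N+1$; combined with $g^N\le 2,\ g^T\le 3$ this leaves exactly the sub-cases $(g^N,g^T)=(1,2)$ and $(g^N,g^T)=(2,3)$.

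The first serious input is algebraic and is the role of the octonion calculus of Section \ref{co-cja}: I would translate the coassociative condition into statements about the anti-involutions $\Phi_\th$ on $R_\th M=N_\th M\oplus T_\th M$. Identifying $\Phi_\th$ with a rescaling of octonion multiplication (equivalently the $\R^7$ cross product), one sees that coassociativity ties the different angle spaces together and imposes an algebraic relation among the distinct nonzero normal Jordan angles. Feeding $g^N\le 2$ into this relation reduces the admissible angle data to a short finite list; in the two-angle sub-case it should single out the Lawson--Osserman values $\th_1=\arccos(2/3)$ (multiplicity $1$) and $\th=\arccos(\sqrt 6/6)$ (multiplicity $2$), which is exactly the spectrum computed in Proposition \ref{lo-cone}.

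The second input is analytic. I would compute the relevant components of $\n B$ by differentiating the CJA identities \eqref{Phi2} and then invoke the Codazzi equations, exactly as in the derivation of Lemma \ref{sec30}. In the sub-case $(g^N,g^T)=(2,3)$ the tangent angle $\th$ has $m_\th^T=2>1$, so the constraint equation of Lemma \ref{sec30} is non-trivial and produces a dichotomy: either $B\equiv 0$, or the angle data is rigid and the component of $B$ along the multiplicity-one direction forces the integral curves of the $\th_1$-distribution to be straight rays, i.e. $M$ is ruled by lines through a common vertex. In the sub-case $(g^N,g^T)=(1,2)$ the single nonzero angle has multiplicity $3$ and $m_0^T=1$; here the same differentiation, together with the nullity properties of $B$ and the coassociative relation, should close up to give $B\equiv 0$.

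It then remains to integrate. If $B\equiv 0$ then $M$ is totally geodesic, hence an affine $4$-plane and $f$ is affine linear. Otherwise the ruling by rays exhibits $M$ as a portion of a cone; locating its vertex produces the translation parameters $x_0\in\H$ and $y_0\in\Im\H$, and the normal and tangent Jordan-angle spectrum of $M$ coincides with that of the Lawson--Osserman cone. By Proposition \ref{position} the relative position of each $N_pM$ (resp. $T_pM$) with respect to $\Im\H$ (resp. $\H e$) is then the same as for $M(a)$, and homogeneity of the cone upgrades this pointwise matching to a global isometric identification, yielding $f(x)=\eta(x-x_0)+y_0$. The main obstacle I expect is the first, algebraic, step: extracting from the octonion structure the precise relation among the normal Jordan angles and the exact form of the coassociative $\Phi_\th$, since it is this relation---rather than any soft counting---that both rules out spurious angle spectra and ties $B$ to the cone geometry when $B\not\equiv 0$.
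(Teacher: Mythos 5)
Your overall architecture tracks the paper's (octonion algebra constraining the normal angle spectrum, then Lemma \ref{sec30} with minimality, then integration to a cone), and the opening reduction is fine: coassociative graphs are minimal, $r<3$ forces $g^N=g^T\le 2$ so Theorem \ref{cja1} applies, leaving $(g^N,g^T)=(1,2)$ or $(2,3)$. But your key algebraic claim is wrong: associativity of the normal $3$-planes does \emph{not} reduce the angle data to a short finite list. What it yields (Proposition \ref{o8}) is only the sum relation $\th_3=\th_1+\th_2$ or $\pi-(\th_1+\th_2)$ among the three normal Jordan angles; with $g^N=2$ this still leaves continuous one-parameter families, namely $\th_1=\th_2=\th,\ \th_3=2\th$ or $\pi-2\th$, and $\th_2=\th_3=\th,\ \th_1=\pi-2\th$, besides the family $\th_1=0,\ \th_2=\th_3$. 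The Lawson--Osserman values $\arccos(2/3),\ \arccos(\sqrt{6}/6)$ are \emph{not} singled out by the algebra but by the analysis: within each family the paper (Proposition \ref{p1}) combines Lemma \ref{sec30}, the Codazzi-derived formulas (\ref{sec9}), (\ref{sec20}), and $\n H=0$ to obtain identities of the form $(\k_{\th\th'}-3\k_{\th 0})\,h^2=0$, whose trigonometric coefficient, e.g. $\f{2\cos\th(5-6\sin^2\th)}{\sin 3\th}$, vanishes exactly when $\sin^2\th=5/6$, i.e.\ at the LO spectrum; for every other spectrum in the continuum the coefficient is nonzero and a further mean-curvature computation gives $B\equiv 0$. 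Your plan as written leaves these continuous families unhandled, because your dichotomy step presupposes the rigid angle data that you claimed, incorrectly, the algebra already provides; the degenerate identity $2\k_{\th_1 0}+\k_{\th_1\th}=0$ at the LO angles (see (\ref{k10})) is precisely what permits $B\not\equiv 0$ there and is invisible in your outline.

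The second gap is the endgame. Matching the Jordan-angle spectrum of $M$ with that of $M(a)$ and invoking Proposition \ref{position} plus homogeneity does not identify $M$ with a translate of the cone: Proposition \ref{position} is a pointwise congruence statement about the pair $(N_pM,\Im\H)$, satisfied simultaneously by every member of the family $M(a)$, $a\in\text{Sp}_1$, and it does not by itself rule out other submanifolds realizing the same pointwise data. The paper's Proposition \ref{p2} needs several genuine integrations that your sketch omits: Frobenius integrability of $e_1^\bot$ to produce the leaf $N$; a Codazzi computation showing $h=h_{1,22}$ is constant along $N$ (without which there is no single vertex); the identity $2\k_{\th_1 0}+\k_{\th_1\th}=0$ again, to make $\psi=\mb{X}+(\k_{\th_1\th}h_0)^{-1}e_1$ constant so that $N$ lies on a sphere and all rays meet at one point; and finally the identification of both $N$ and $M(a_0)\cap B(R_0)$ as integral manifolds of an explicit distribution $E$ on $S=\{x\in\Im\O:|x|=R_0,\ |\mc{P}_0^\bot x|=\cos\th_1 R_0\}$, concluded by Frobenius uniqueness together with analyticity of minimal submanifolds (and the case $h_0=0$ handled separately as affine linear). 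None of this is recoverable from a soft rigidity-plus-homogeneity argument, so your final step must be rebuilt along these lines.
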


\bigskip\bigskip

\Section{On the second fundamental form of submanfolds with CJA}{On the second fundamental form of submanfolds with CJA}\label{s4}

Let $M$ be an $n$-dimensional submanifold in $\R^{n+m}$ with CJA relative to a fixed $m$-plane
 $Q_0$. We use the  notations  $\mc{P}_0$, $\mc{P}_0^\bot$, $\text{Arg}^N$, $\text{Arg}^T$, $N_\th M$, $T_\th M$, $R_\th M$, $m_\th^N$, $m_\th^T$, $g^N$, $g^T$  established
in Section \ref{cja}. For   $p$ in $M$, we put
\begin{equation}
N_{p,\th}M:=N_p M\cap N_\th M,\qquad T_{p,\th}M:=T_p M\cap T_\th M.
\end{equation}

The second fundamental form
$B$ is a pointwise symmetric bilinear form on $T_p M$ ($p\in M$) with values in $N_p M$ defined by
$$B_{XY}=(\overline{\n}_X Y)^N$$
with $\overline{\n}$ the Levi-Civita connection on $\R^{n+m}$. The induced connections on $TM$ and $NM$ are
$$\n_X Y=(\overline{\n}_X Y)^T,\qquad \n_X \nu=(\overline{\n}_X \nu)^N.$$
Here $X,Y$ are smooth sections of $TM$ and $\nu$ denotes a smooth section of $NM$.
The second fundamental form, the curvature tensor of the submanifold, the curvature tensor of the normal bundle and
the curvature tensor of the ambient manifold satisfy the Gauss, Codazzi and Ricci equations
(see \cite{x} for details).

Let $A$ be the shape operator defined by
\begin{equation}
A^\nu(v)=(-\ol{\n}_v\nu)^T\qquad \forall\nu\in \G(NM),v\in T_p M.
\end{equation}
 $A^\nu$ is a symmetric operator on $T_p M$ and  satisfies the Weingarten equations
\begin{equation}
\lan B_{XY},\nu\ran=\lan A^\nu(X),Y\ran\qquad \forall X,Y\in \G(TM).
\end{equation}

The trace of the second fundamental form gives a normal vector field $H$ on $M$, which is called the mean curvature vector field.
If $\n H\equiv 0$, then we say that $M$ has parallel mean curvature. Moreover if $H\equiv 0$, $M$ is called a minimal submanifold.

\subsection{Nullity lemmas}
Let $\th\in \text{Arg}^N$($\th\neq 0,\pi/2$) and $\Phi_\th: R_\th M\ra R_\th M$ denote the anti-involution associated to $\th$,
then (\ref{Phi2}) gives
\begin{equation*}
\aligned
\mc{P}_0^\bot v&=\cos\th\big(\cos\th\ v-\sin\th\ \Phi_{\th}(v)\big)\\
&=\cos^2\th\ v-\cos\th\sin\th\ \Phi_{\th}(v)
\endaligned
\end{equation*}
for any $v\in T_{\th}M$ and
$$\aligned
\mc{P}_0 \mu&=\cos\th\big(\cos\th\ \mu-\sin\th\ \Phi_\th(\mu)\big)\\
&=\cos^2\th\ \mu-\cos\th\sin\th\ \Phi_\th(\mu)
\endaligned$$
for any $\mu\in N_\th M$.
 In other words,
\begin{equation}\label{ang1}
\aligned
(\mc{P}_0^\bot v)^T=\cos^2\th\ v,&\qquad (\mc{P}_0^\bot v)^N=-\cos\th\sin\th\ \Phi_\th(v),\\
(\mc{P}_0\mu)^N=\cos^2\th\ \mu,&\qquad (\mc{P}_0 \mu)^T=-\cos\th\sin\th\ \Phi_\th(\mu).
\endaligned
\end{equation}
Based on the above formulas, one can easily deduce the following nullity lemmas for the second fundamental form
of $M$.
\bigskip

\begin{lem}\label{sec2}
For each $\th\in \text{Arg}^N$ which takes values in  $(0,\pi/2)$,
\begin{equation}
\lan B_{uv},\Phi_\th(w)\ran+\lan B_{uw},\Phi_\th(v)\ran=0
\end{equation}
holds pointwisely for any $u\in T_p M$ and $v,w\in T_{p,\th}M$. In particular,
\begin{equation}\label{sec1}
\lan B_{uv},\Phi_\th(v)\ran=0
\end{equation}
for every $v\in T_{p,\th}M$.
\end{lem}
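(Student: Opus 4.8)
The plan is to reduce the statement to a single differentiated identity and then read off its normal component. The key structural observation is that $\mc{P}_0^\bot$, being the orthogonal projection of $\R^{n+m}$ onto the \emph{fixed} subspace $Q_0^\bot$, is a constant self-adjoint endomorphism of the ambient Euclidean space; consequently it is parallel with respect to $\bn$, i.e. $\bn_u(\mc{P}_0^\bot X)=\mc{P}_0^\bot(\bn_u X)$ for every ambient vector field $X$ along $M$, and $\lan \mc{P}_0^\bot a,b\ran=\lan a,\mc{P}_0^\bot b\ran$. These two properties are the engine of the whole argument.

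First I would fix $\th\in\text{Arg}^N\cap(0,\pi/2)$ and extend the given vectors $v,w\in T_{p,\th}M$ to local smooth sections of the subbundle $T_\th M$, which is legitimate by Lemma \ref{dis}; since $B$ is tensorial, its value at $p$ is unaffected by the choice of extension. Starting from (\ref{ang1}), namely $(\mc{P}_0^\bot v)^T=\cos^2\th\,v$ and $(\mc{P}_0^\bot v)^N=-\cos\th\sin\th\,\Phi_\th(v)$, and using that $\Phi_\th(v)\in N_\th M$ is normal while $w$ is tangent, I obtain the scalar identity $\lan \mc{P}_0^\bot v,w\ran=\cos^2\th\,\lan v,w\ran$ on $M$.

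The second step is to differentiate this identity along an arbitrary $u\in T_p M$. By metric compatibility of $\bn$, parallelism of $\mc{P}_0^\bot$, and self-adjointness, the left-hand side becomes $\lan \bn_u v,\mc{P}_0^\bot w\ran+\lan \mc{P}_0^\bot v,\bn_u w\ran$, while the right-hand side becomes $\cos^2\th(\lan\bn_u v,w\ran+\lan v,\bn_u w\ran)$. Now I would insert the Gauss decomposition $\bn_u v=\n_u v+B_{uv}$ (and similarly for $w$) and separate tangential from normal contributions, using $(\mc{P}_0^\bot w)^T=\cos^2\th\,w$ to evaluate the tangential pieces. The terms proportional to $\cos^2\th\,\lan\n_u v,w\ran$ and $\cos^2\th\,\lan v,\n_u w\ran$ then cancel exactly against the right-hand side, leaving precisely $\lan B_{uv},(\mc{P}_0^\bot w)^N\ran+\lan B_{uw},(\mc{P}_0^\bot v)^N\ran=0$. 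Substituting $(\mc{P}_0^\bot w)^N=-\cos\th\sin\th\,\Phi_\th(w)$ and $(\mc{P}_0^\bot v)^N=-\cos\th\sin\th\,\Phi_\th(v)$ and dividing by the nonzero factor $-\cos\th\sin\th$ (nonzero because $\th\in(0,\pi/2)$) yields the asserted identity; setting $w=v$ gives the displayed special case (\ref{sec1}).

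I do not expect a genuine obstacle here: the content is entirely encoded in the parallelism and self-adjointness of $\mc{P}_0^\bot$ together with (\ref{ang1}). The only point demanding care is the bookkeeping when pairing $\bn_u v=\n_u v+B_{uv}$ against the mixed vector $\mc{P}_0^\bot w$, which has both a tangential part $\cos^2\th\,w$ and a normal part $-\cos\th\sin\th\,\Phi_\th(w)$; one must verify that all purely tangential contributions cancel so that only the $B$-terms survive. A secondary point worth a line is confirming that extending $v,w$ to sections does not affect the pointwise conclusion, which follows from the tensoriality of $B$.
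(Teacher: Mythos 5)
Your proof is correct and is essentially the paper's argument: both differentiate a constant-angle identity built from the fixed projection $\mc{P}_0^\bot$ along $u$, split $\bn_u$ via the Gauss formula, and read off the surviving normal terms through (\ref{ang1}), with $\cos\th\sin\th\neq 0$ finishing the job. The only (cosmetic) difference is direction of polarization: you differentiate the bilinear identity $\lan \mc{P}_0^\bot Y,Z\ran=\cos^2\th\,\lan Y,Z\ran$ to get the symmetric identity directly, whereas the paper differentiates $|\mc{P}_0^\bot X|^2\equiv\cos^2\th$ for a unit section to get (\ref{sec1}) first and obtains the symmetric form by linearity.
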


\begin{proof}
By linearity, it  suffices to prove (\ref{sec1})
for any unit vector $v\in T_{p,\th}M$.

Let $X$ be a smooth local section of $T_\th M$, such that $X_p=v$ and $|X|\equiv 1$, then
\begin{equation}\label{ang2}
\lan \mc{P}_0^\bot X,\mc{P}_0^\bot X\ran=|\mc{P}_0^\bot X|^2\equiv \cos^2\th.
\end{equation}
Differentiating both sides with respect to $u$ yields
$$\aligned
0&=(1/2)\n_u \lan \mc{P}_0^\bot X,\mc{P}_0^\bot X\ran=\lan \overline{\n}_u (\mc{P}_0^\bot X),\mc{P}_0^\bot v\ran\\
&=\lan \mc{P}_0^\bot(\ol{\n}_u X),\mc{P}_0^\bot v\ran=\lan \mc{P}_0^\bot(\n_u X),\mc{P}_0^\bot v\ran+\lan \mc{P}_0^\bot B_{uv},\mc{P}_0^\bot v\ran\\
&=\lan \n_u X,(\mc{P}_0^\bot v)^T\ran+\lan B_{uv},(\mc{P}_0^\bot v)^N\ran\\
&=\cos^2\th \lan \n_u X,v\ran-\cos\th\sin\th \lan B_{uv},\Phi_\th(v)\ran\\
&=(1/2)\cos^2\th \n_u |X|^2-\cos\th\sin\th \lan B_{uv},\Phi_\th(v)\ran\\
&=-\cos\th\sin\th\lan B_{uv},\Phi_\th(v)\ran
\endaligned$$
(where we have used (\ref{ang1})) and then we arrive at (\ref{sec1}).

\end{proof}
\bigskip

\begin{lem}\label{sec6}
For each $\th\in \text{Arg}^N$ taking values in $(0,\pi/2)$,
\begin{equation}\label{sec3}
\lan B_{uv},\nu\ran=0
\end{equation}
for any $u,v\in T_{p,\th}M$ and $\nu\in N_{p,\th}M$.
\end{lem}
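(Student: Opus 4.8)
The plan is to convert the statement, which mixes the tangent inputs $u,v$ with a normal output $\nu$, into the vanishing of a purely tangential trilinear form, and then to invoke the familiar clash between symmetry and antisymmetry. The only geometric inputs needed are the anti-involution $\Phi_\th$ from Lemma \ref{Jordan} (equivalently its bundle version (\ref{Phi2})) together with the identity already established in Lemma \ref{sec2}.

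First I would eliminate $\nu$ in favor of a tangent vector. Because $\Phi_\th(T_\th M)=N_\th M$ and $\Phi_\th^2=-\Id$, the restriction of $\Phi_\th$ to the fiber $T_{p,\th}M$ is a linear isomorphism onto $N_{p,\th}M$; explicitly, setting $w:=-\Phi_\th(\nu)\in T_{p,\th}M$ yields $\Phi_\th(w)=\nu$. Hence, letting $u,v,\nu$ range over all admissible vectors, the assertion $\lan B_{uv},\nu\ran=0$ is equivalent to
\begin{equation}
\lan B_{uv},\Phi_\th(w)\ran=0\qquad\text{for all }u,v,w\in T_{p,\th}M.
\end{equation}

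Next I would record the two symmetry properties of the trilinear form $C(u,v,w):=\lan B_{uv},\Phi_\th(w)\ran$ on $T_{p,\th}M$. Symmetry of the second fundamental form, $B_{uv}=B_{vu}$, makes $C$ symmetric in its first two arguments. Specializing the arbitrary tangent vector in Lemma \ref{sec2} to lie in $T_{p,\th}M$ gives $C(u,v,w)+C(u,w,v)=0$, i.e. $C$ is antisymmetric in its last two arguments. The concluding step is the standard cancellation: chaining these two relations,
\begin{equation*}
C(u,v,w)=C(v,u,w)=-C(v,w,u)=-C(w,v,u)=C(w,u,v)=C(u,w,v)=-C(u,v,w),
\end{equation*}
which forces $C\equiv 0$. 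There is no genuine obstacle here: all the analytic content sits in Lemma \ref{sec2}, and the present lemma reduces to the purely algebraic fact that a $3$-tensor which is symmetric in one index pair and skew in an overlapping pair must vanish. The only thing to watch is keeping careful track of which pair of slots each relation acts on along the chain of substitutions.
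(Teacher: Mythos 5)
Your proposal is correct and is essentially the paper's own proof: the paper also sets $w:=-\Phi_\th(\nu)$ so that $\Phi_\th(w)=\nu$ and then runs exactly the chain of alternating applications of the symmetry of $B$ and the antisymmetry from Lemma \ref{sec2}, concluding $\lan B_{uv},\nu\ran=-\lan B_{uv},\nu\ran$. Your reformulation via the trilinear form $C$ (symmetric in the first pair, skew in the last pair, hence zero) is just a cleaner packaging of the same computation.
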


\begin{proof}
Let $w:=-\Phi_\th(\nu)$, then $w\in T_{p,\th}M$ and $\Phi_\th(w)=-\Phi_\th^2(\nu)=\nu$. Applying Lemma
\ref{sec2} gives
$$\aligned
\lan B_{uv},\nu\ran&=\lan B_{uv},\Phi_\th(w)\ran=-\lan B_{uw},\Phi_\th(v)\ran\\
&=-\lan B_{wu},\Phi_\th(v)\ran=\lan B_{wv},\Phi_\th(u)\ran\\
&=\lan B_{vw},\Phi_\th(u)\ran=-\lan B_{vu},\Phi_\th(w)\ran\\
&=-\lan B_{uv},\Phi_\th(w)\ran=-\lan B_{uv},\nu\ran
\endaligned$$
and (\ref{sec3}) immediately follows from the above equation.

\end{proof}

\bigskip

\begin{lem}\label{sec25}
If $\th\in \text{Arg}^N\cap \text{Arg}^T$ and $\th\equiv 0$ or $\pi/2$, then
\begin{equation}
\lan B_{uv},\nu\ran=0
\end{equation}
for any $u\in T_p M$, $v\in T_{p,\th}M$ and $\nu\in N_{p,\th}M$.
\end{lem}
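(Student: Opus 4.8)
The plan is to adapt the differentiation trick of Lemma \ref{sec2}, simplified by the observation that when $\th\equiv 0$ or $\pi/2$ the angle space distributions degenerate into intersections with the \emph{fixed} subspaces $Q_0,Q_0^\bot$. First I would identify these degenerate angle spaces. If $v\in T_{p,0}M$ then $(\mc{P}_0^\bot v)^T=v$, whence $|\mc{P}_0^\bot v|^2=\lan\mc{P}_0^\bot v,v\ran=|v|^2$ and therefore $\mc{P}_0 v=0$; thus $T_{p,0}M=T_pM\cap Q_0^\bot$, and dually $N_{p,0}M=N_pM\cap Q_0$. Symmetrically $T_{p,\pi/2}M=T_pM\cap Q_0$ and $N_{p,\pi/2}M=N_pM\cap Q_0^\bot$. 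To treat both cases at once, let $\Pi:=\mc{P}_0$ when $\th\equiv 0$ and $\Pi:=\mc{P}_0^\bot$ when $\th\equiv\pi/2$; then a section of $T_\th M$ lies in $\ker\Pi$ while $N_{p,\th}M$ lies in $\text{Im}\,\Pi$, so that $\Pi\nu=\nu$ for the given $\nu$.

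Next, using Lemma \ref{dis} (the constancy of multiplicities under CJA makes $T_\th M$ a smooth subbundle) I would choose a smooth local section $X$ of $T_\th M$ with $X_p=v$; by the previous paragraph this section satisfies $\Pi X\equiv 0$ along $M$. Since $\Pi$ is a fixed, hence parallel, orthogonal projection of $\R^{n+m}$, differentiation commutes with it: $0=\ol\n_u(\Pi X)=\Pi(\ol\n_u X)$ at $p$. Pairing with $\nu$ and using that $\Pi$ is self-adjoint with $\Pi\nu=\nu$ gives $\lan\ol\n_u X,\nu\ran=0$. Finally, inserting the Gauss decomposition $\ol\n_u X=\n_u X+B_{uv}$ and noting that $\n_u X$ is tangent while $\nu$ is normal kills the first term, leaving $\lan B_{uv},\nu\ran=0$, as required.

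I do not expect a genuine obstacle here: in contrast to Lemma \ref{sec2}, I differentiate a \emph{vanishing} vector field $\Pi X$ rather than the scalar $|\mc{P}_0^\bot X|^2$, so no normalization term $\n_u|X|^2$ appears and the argument is a clean one-step computation. The only point demanding care is the bookkeeping of which projection to use---$\mc{P}_0$ for $\th\equiv 0$ and $\mc{P}_0^\bot$ for $\th\equiv\pi/2$---together with the matching observation that in each case the same projection annihilates the tangent angle directions and fixes the normal angle directions, which is exactly what makes the unified treatment via $\Pi$ possible.
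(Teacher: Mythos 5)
Your proof is correct and follows essentially the same route as the paper's: the paper likewise takes a smooth local section $X$ of $T_\th M$ through $v$, observes that it stays in the fixed subspace ($Q_0^\bot$ for $\th\equiv 0$, $Q_0$ for $\th\equiv\pi/2$) so that $\ol{\n}_u X$ remains there, and pairs with $\nu$ lying in the orthogonal fixed subspace. Your uniform bookkeeping via the projection $\Pi$ and the identity $\ol{\n}_u(\Pi X)=\Pi(\ol{\n}_u X)$ is merely a compact restatement of the paper's two-case argument.
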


\begin{proof}

If $\th\equiv 0$, let $X$ be a smooth local section of $T_\th M$ such that $X_p=v$, then $X_q\in Q_0^\bot$
for any $q$. Thus $(\ol{\n}_u X)_p\subset Q_0^\bot$. On the other hand, $\nu\in N_{p,\th}M$ implies
$\nu\in Q_0$, hence
$$\lan B_{uv},\nu\ran=\lan \ol{\n}_u X,\nu\ran=0.$$
The proof for $\th\equiv\pi/2$ is similar.
\end{proof}

\subsection{Connections}\label{2.2}

Let $\th,\si\in \text{Arg}^T$, $\th\neq \si$, $X$  a local section of $TM$, $Y$ and $Z$ local sections of
$T_\th M$ and $T_\si M$, respectively. Define
\begin{equation}
(S_{\th\si})_{YZ}(X):=\lan \n_X Y,Z\ran.
\end{equation}
Then for any smooth function $f$ defined on $M$, $(S_{\th\si})_{YZ}(fX)=f(S_{\th\si})_{YZ}(X)$,
$(S_{\th\si})_{Y,fZ}(X)=f(S_{\th\si})_{YZ}(X)$ and
$$\aligned
(S_{\th\si})_{fY,Z}(X)&=\lan \n_X(fY),Z\ran=f\lan \n_X Y,Z\ran+(\n_X f)\lan Y,Z\ran\\
&=f(S_{\th\si})_{YZ}(X).
\endaligned$$
This means $S_{\th\si}$ is a smooth tensor field on $M$ of type $(3,0)$. More precisely, $S_{\th\si}$ is a smooth section
of the tensor bundle $T^*M\otimes T_\th^* M\otimes T_\si^* M$. Since $\n$ is a Levi-Civita connection on $M$,
\begin{equation}\label{S}
\aligned
(S_{\th\si})_{YZ}(X)&=\lan \n_X Y,Z\ran=\n_X\lan Y,Z\ran-\lan \n_X Z,Y\ran\\
&=-\lan \n_X Z,Y\ran=-(S_{\si\th})_{ZY}(X).
\endaligned
\end{equation}

Now we additionally define
\begin{equation}
\Phi_\th|_{R_\th M}=0\qquad \text{whenever }\th\equiv 0\text{ or }\pi/2,
\end{equation}
 then (\ref{ang1}) still holds when $\th=0$ or $\pi/2$.
Let
\begin{equation}\label{k}
\k_{\th\si}:=\f{\sin 2\th}{\cos 2\th-\cos 2\si}
\end{equation}
be a constant depending only on $\th$ and $\si$.
The following result reveals the relationship between $S_{\th\si}$ and the second fundamental form.
\bigskip

\begin{lem}\label{sec5}
Let $\th,\si\in \text{Arg}^T$, $\th\neq \si$, then for any $u\in T_p M$, $v\in T_{p,\th}M$ and $w\in T_{p,\si}M$,
\begin{equation}\label{sec4}
(S_{\th\si})_{vw}(u)=\k_{\si\th}\lan B_{uv},\Phi_\si(w)\ran-\k_{\th\si}\lan B_{uw},\Phi_\th(v)\ran.
\end{equation}

\end{lem}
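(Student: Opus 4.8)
The plan is to exploit the single most useful structural fact available: the orthogonal projection $\mc{P}_0^\bot$ onto $Q_0^\bot$ is a constant, hence parallel, endomorphism of the ambient $\R^{n+m}$, so it commutes with the ambient connection $\ol{\n}$. I would fix smooth local sections $Y$ of $T_\th M$ and $Z$ of $T_\si M$ with $Y_p=v$ and $Z_p=w$, and then compute the single ambient quantity $\ol{\n}_u(\mc{P}_0^\bot Y)$ in two different ways, pairing each with $w$ and comparing.

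For the first way, parallelism of $\mc{P}_0^\bot$ and the Gauss formula give $\ol{\n}_u(\mc{P}_0^\bot Y)=\mc{P}_0^\bot(\ol{\n}_u Y)=\mc{P}_0^\bot(\n_u Y+B_{uv})$. Pairing with $w$, moving $\mc{P}_0^\bot$ across by self-adjointness, and using $\mc{P}_0^\bot w=\cos^2\si\,w-\cos\si\sin\si\,\Phi_\si(w)$ from (\ref{ang1}), the tangential–normal splitting annihilates every term except two, leaving $\cos^2\si\,(S_{\th\si})_{vw}(u)-\cos\si\sin\si\,\lan B_{uv},\Phi_\si(w)\ran$. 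For the second way, I would differentiate the pointwise identity $\mc{P}_0^\bot Y=\cos^2\th\,Y-\cos\th\sin\th\,\Phi_\th(Y)$, again from (\ref{ang1}) and valid for all $\th\in\text{Arg}^T$ (including $0,\pi/2$ under the convention $\Phi_\th=0$). Since $\th$ is constant this equals $\cos^2\th\,\ol{\n}_u Y-\cos\th\sin\th\,\ol{\n}_u(\Phi_\th Y)$; pairing with $w$, the first term contributes $\cos^2\th\,(S_{\th\si})_{vw}(u)$ (the normal part $B_{uv}$ drops out), and the Weingarten relation rewrites the second via $\lan\ol{\n}_u(\Phi_\th Y),w\ran=\lan(\ol{\n}_u(\Phi_\th Y))^T,w\ran=-\lan A^{\Phi_\th(v)}u,w\ran=-\lan B_{uw},\Phi_\th(v)\ran$, giving $\cos^2\th\,(S_{\th\si})_{vw}(u)+\cos\th\sin\th\,\lan B_{uw},\Phi_\th(v)\ran$.

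Equating the two expressions yields $(\cos^2\si-\cos^2\th)(S_{\th\si})_{vw}(u)=\cos\si\sin\si\,\lan B_{uv},\Phi_\si(w)\ran+\cos\th\sin\th\,\lan B_{uw},\Phi_\th(v)\ran$. I would then apply the double-angle identities $\cos^2 x=\tfrac12(1+\cos 2x)$ and $\sin x\cos x=\tfrac12\sin 2x$ and divide by $\cos 2\si-\cos 2\th$, which is nonzero because $x\mapsto\cos 2x$ is injective on $[0,\pi/2]$ and $\th\neq\si$. Matching the two resulting coefficients, $\tfrac{\sin 2\si}{\cos 2\si-\cos 2\th}$ and $\tfrac{\sin 2\th}{\cos 2\si-\cos 2\th}=-\tfrac{\sin 2\th}{\cos 2\th-\cos 2\si}$, against the definition (\ref{k}) of $\k_{\si\th}$ and $\k_{\th\si}$ reproduces exactly (\ref{sec4}).

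The one delicate point is the term $\ol{\n}_u(\Phi_\th Y)$, the derivative of the anti-involution along $M$, which is a priori hard to control. The observation that makes the argument work cleanly is that I never need $\ol{\n}_u\Phi_\th$ itself: since $\Phi_\th Y$ is a normal section and I pair only against the tangent vector $w$, only its tangential part $(\ol{\n}_u(\Phi_\th Y))^T=-A^{\Phi_\th(v)}u$ contributes, and Weingarten converts this into a second fundamental form term. A minor bookkeeping check is that the boundary cases $\th$ or $\si\in\{0,\pi/2\}$ cause no difficulty: there $\Phi=0$, but the accompanying factor $\sin\th\cos\th$ (resp. $\sin\si\cos\si$) vanishes as well, so the corresponding term drops out and the stated formula persists.
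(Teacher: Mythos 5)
Your proof is correct, and it reaches the paper's key intermediate identity
\begin{equation*}
(\cos^2\si-\cos^2\th)\,(S_{\th\si})_{vw}(u)=\cos\si\sin\si\,\lan B_{uv},\Phi_\si(w)\ran+\cos\th\sin\th\,\lan B_{uw},\Phi_\th(v)\ran
\end{equation*}
by a genuinely different route. The paper extends \emph{both} vectors to local sections $Y$ of $T_\th M$ and $Z$ of $T_\si M$, observes that $\lan \mc{P}_0^\bot Y,\mc{P}_0^\bot Z\ran\equiv 0$ (from the eigenvalue relation together with orthogonality of distinct angle spaces), and differentiates this orthogonality relation; since only $\mc{P}_0^\bot$-images are differentiated there, the derivative of the anti-involution never appears at all, at the price of producing two $S$-terms that must be combined via the skew-symmetry $(S_{\si\th})_{wv}=-(S_{\th\si})_{vw}$ from (\ref{S}). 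You instead differentiate the pointwise decomposition $\mc{P}_0^\bot Y=\cos^2\th\, Y-\cos\th\sin\th\,\Phi_\th(Y)$ for a \emph{single} section $Y$ --- so $w$ never needs to be extended --- and you confront $\ol{\n}_u(\Phi_\th Y)$ head-on, correctly noting that since $\Phi_\th Y$ is a normal section paired against the tangent vector $w$, only its tangential part survives, and the Weingarten relation converts it into $-\lan B_{uw},\Phi_\th(v)\ran$; the paper's symmetrization trick is thus traded for the shape operator. What each buys: the paper's version is symmetric in $(\th,\si)$ and avoids any discussion of differentiating $\Phi_\th$, while yours is leaner (one extension instead of two, no appeal to (\ref{S})) and makes explicit the structural point that the a priori uncontrolled term $\ol{\n}_u\Phi_\th$ is harmless. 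Your bookkeeping at the degenerate values $\th,\si\in\{0,\pi/2\}$ (where $\Phi=0$ but the accompanying factor $\sin\cdot\cos$ vanishes with it, matching the paper's convention from \S \ref{2.2}), your use of the constancy of $\th$ --- which is precisely where the CJA hypothesis enters, as in the paper --- and your justification that $\cos 2\si-\cos 2\th\neq 0$ are all sound, and your final coefficient matching with $\k_{\si\th}$ and $-\k_{\th\si}$ reproduces (\ref{sec4}) exactly.
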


\begin{proof}
Let $Y,Z$ be smooth local sections of $T_\th M$ and $T_\si M$, respectively, such that $Y(p)=v$, $Z(p)=w$, then $(\mc{P}_0^\bot Y)^T=\cos^2\th\ Y$,
$(\mc{P}_0^\bot Z)^T=\cos^2\si\ Z$. Hence
\begin{equation*}
\aligned
0&=\cos^2\th\lan Y,Z\ran=\lan (\mc{P}_0^\bot Y)^T,Z\ran\\
&=\lan \mc{P}_0^\bot Y,Z\ran=\lan \mc{P}_0^\bot Y,\mc{P}_0^\bot Z\ran.
\endaligned
\end{equation*}
Differentiating both sides of the above equation with respect to $u\in T_p M$ yields
$$\aligned
0&=\n_u\lan \mc{P}_0^\bot Y,\mc{P}_0^\bot Z\ran=\lan \ol{\n}_u(\mc{P}_0^\bot Y),\mc{P}_0^\bot w\ran+\lan \mc{P}_0^\bot v,\ol{\n}_u(\mc{P}_0^\bot Z)\ran\\
&=\lan \mc{P}_0^\bot(\ol{\n}_u Y),\mc{P}_0^\bot w\ran+\lan \mc{P}_0^\bot v,\mc{P}_0^\bot(\ol{\n}_u Z)\ran\\
&=\lan \mc{P}_0^\bot(\n_u Y),\mc{P}_0^\bot w\ran+\lan \mc{P}_0^\bot v,\mc{P}_0^\bot(\n_u Z)\ran+\lan \mc{P}_0^\bot B_{uv},\mc{P}_0^\bot w\ran+\lan \mc{P}_0^\bot v, \mc{P}_0^\bot B_{uw}\ran\\
&=\lan \n_u Y,(\mc{P}_0^\bot w)^T\ran+\lan \n_u Z,(\mc{P}_0^\bot v)^T\ran+\lan B_{uv},(\mc{P}_0^\bot w)^N\ran+\lan B_{uw},(\mc{P}_0^\bot v)^N\ran\\
&=\cos^2\si \lan \n_u Y,w\ran+\cos^2\th\lan \n_u Z,v\ran-\cos\si\sin\si\lan B_{uv},\Phi_\si(w)\ran-\cos\th\sin\th\lan B_{uw},\Phi_\th(v)\ran\\
&=(\cos^2\si-\cos^2\th)(S_{\th\si})_{vw}(u)-\cos\si\sin\si\lan B_{uv},\Phi_\si(w)\ran-\cos\th\sin\th\lan B_{uw},\Phi_\th(v)\ran\\
&=(1/2)(\cos 2\si-\cos 2\th)(S_{\th\si})_{vw}(u)-(1/2)\sin 2\si\lan B_{uv},\Phi_\si(w)\ran-(1/2)\sin 2\th\lan B_{uw},\Phi_\th(v)\ran
\endaligned$$
(we have used (\ref{ang1}) and (\ref{S})), which is equivalent to (\ref{sec4}).

\end{proof}
\bigskip

Similarly, given $u\in T_p M$, $\mu\in \G(N_\th M)$, $\nu\in \G(N_\si M)$ with $\th,\si\in \text{Arg}^N$ and $\th\neq \si$, one can define
\begin{equation}
(S_{\th\si}^N)_{\mu\nu}(u):=\lan \n_u \mu,\nu\ran.
\end{equation}
Then $S_{\th\si}^N$ is a smooth section of $T^*M\otimes N_\th^* M\otimes N_\si^* M$, and
\begin{equation}
\aligned
(S_{\si\th}^N)_{\nu\mu}(u)&=\lan \n_u\nu,\mu\ran=\n_u\lan \nu,\mu\ran-\lan \nu,\n_u \mu\ran\\
&=-\lan \n_u \mu,\nu\ran=-(S_{\th\si}^N)_{\mu\nu}(u).
\endaligned
\end{equation}

Let $\mu,\nu$ be local section of $N_\th M$ and $N_\si M$ respectively, then
\begin{equation}
\aligned
0&=\cos^2\th\lan \mu,\nu\ran=\lan (\mc{P}_0\mu)^N,\nu\ran\\
&=\lan \mc{P}_0\mu,\nu\ran=\lan \mc{P}_0\mu,\mc{P}_0\nu\ran.
\endaligned
\end{equation}
Differentiating both sides of the above equality with respect to $u\in T_p M$, one can use (\ref{ang1}) to get the following result, as in the proof
of Lemma \ref{sec5}.
\bigskip
\begin{lem}\label{sec7}
Given $\th,\si\in \text{Arg}^N$, $\th\neq \si$,
\begin{equation}
(S_{\th\si}^N)_{\mu\nu}(u)=\k_{\th\si}\lan B_{u,\Phi_\th(\mu)},\nu\ran-\k_{\si\th}\lan B_{u,\Phi_\si(\nu)},\mu\ran
\end{equation}
for any $u\in T_p M, \mu\in N_{p,\th}M$ and $\nu\in N_{p,\si}M$.

\end{lem}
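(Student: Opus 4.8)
The plan is to imitate the proof of Lemma \ref{sec5} essentially verbatim, transferring the computation from the tangent angle spaces to the normal ones. I would start from the identity $\lan \mc{P}_0\mu,\mc{P}_0\nu\ran\equiv 0$ recorded just above the statement, where $\mu,\nu$ are now chosen to be smooth local sections of $N_\th M$ and $N_\si M$ with $\mu(p),\nu(p)$ equal to the given vectors; this vanishing holds because the angle spaces attached to distinct Jordan angles are mutually orthogonal. Since $\mc{P}_0$ is the fixed orthogonal projection onto $Q_0$, it is a constant linear endomorphism of $\R^{n+m}$ and hence commutes with the ambient connection $\ol{\n}$, so differentiating this identity along $u\in T_p M$ yields $0=\lan \mc{P}_0(\ol{\n}_u\mu),\mc{P}_0\nu\ran+\lan \mc{P}_0\mu,\mc{P}_0(\ol{\n}_u\nu)\ran$, with $\mc{P}_0\mu$ and $\mc{P}_0\nu$ evaluated at $p$.

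The essential difference from Lemma \ref{sec5} is that here I differentiate normal, rather than tangent, sections. I would therefore split $\ol{\n}_u\mu=\n_u\mu-A^\mu(u)$ into its normal part $\n_u\mu$ and its tangent part $-A^\mu(u)$, and likewise $\ol{\n}_u\nu=\n_u\nu-A^\nu(u)$. Pairing these against the two components of $\mc{P}_0\nu$ and $\mc{P}_0\mu$ supplied by (\ref{ang1})---namely $(\mc{P}_0\nu)^N=\cos^2\si\ \nu$, $(\mc{P}_0\nu)^T=-\cos\si\sin\si\ \Phi_\si(\nu)$ and the analogous pair for $\mu$---the normal--normal pairings produce $\cos^2\si\,\lan\n_u\mu,\nu\ran$ and $\cos^2\th\,\lan\mu,\n_u\nu\ran$, i.e. multiples of $(S_{\th\si}^N)_{\mu\nu}(u)$ and $(S_{\si\th}^N)_{\nu\mu}(u)$, while the tangent--tangent pairings produce $\cos\si\sin\si\,\lan A^\mu(u),\Phi_\si(\nu)\ran$ and $\cos\th\sin\th\,\lan A^\nu(u),\Phi_\th(\mu)\ran$ (the mixed pairings vanish by orthogonality of $T_pM$ and $N_pM$).

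The remaining moves are purely formal. I would invoke the Weingarten equation to rewrite the shape-operator pairings as $\lan A^\mu(u),\Phi_\si(\nu)\ran=\lan B_{u,\Phi_\si(\nu)},\mu\ran$ and $\lan A^\nu(u),\Phi_\th(\mu)\ran=\lan B_{u,\Phi_\th(\mu)},\nu\ran$, and the antisymmetry $(S_{\si\th}^N)_{\nu\mu}(u)=-(S_{\th\si}^N)_{\mu\nu}(u)$ recorded above to merge the two connection terms into the single coefficient $(\cos^2\si-\cos^2\th)(S_{\th\si}^N)_{\mu\nu}(u)$. Passing to double angles via $\cos^2\si-\cos^2\th=\f{1}{2}(\cos 2\si-\cos 2\th)$ and $\cos\th\sin\th=\f{1}{2}\sin 2\th$, then dividing by $\cos 2\si-\cos 2\th$ (nonzero since $\th\neq\si$) and recognizing the resulting ratios as $-\k_{\si\th}$ and $\k_{\th\si}$ from (\ref{k}), produces exactly the asserted formula.

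There is no genuine obstacle: the argument is a line-for-line transcription of Lemma \ref{sec5}, and the excerpt already sets up its starting identity. The only two points that demand attention are that the tangent components of $\ol{\n}_u\mu$ and $\ol{\n}_u\nu$ are now shape operators, so the Weingarten equation must be used to convert them back into the second fundamental form, and that the two occurrences of $S^N$ must be consolidated through its antisymmetry before the angle factor $\cos 2\si-\cos 2\th$ can be divided out.
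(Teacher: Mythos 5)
Your proposal is correct and is exactly the argument the paper intends: the paper's own ``proof'' of Lemma \ref{sec7} consists precisely of the instruction to differentiate $\lan \mc{P}_0\mu,\mc{P}_0\nu\ran\equiv 0$ along $u$ and apply (\ref{ang1}) as in Lemma \ref{sec5}, and your write-up fills in those details faithfully, including the two points where the normal case genuinely differs (the tangent parts of $\ol{\n}_u\mu$, $\ol{\n}_u\nu$ being shape operators converted via Weingarten, and the consolidation of the two $S^N$ terms through antisymmetry before dividing by $\cos 2\si-\cos 2\th\neq 0$). The signs all check out, yielding $-\k_{\si\th}$ and $\k_{\th\si}$ as stated.
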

\bigskip

\subsection{Computation of $\n B$ and related results}
Let $\th\in \text{Arg}^T$, $\si\in \text{Arg}^N$, and $(\cdot)^\si$ be the orthogonal projection of $N_p M$ onto $N_{p,\si}M$. Define
\begin{equation}\label{R}
R_{\th\si}(v_1,v_2,v_3,v_4):=\lan B_{v_1v_3}^\si,B_{v_2v_4}^\si\ran-\lan B_{v_1v_4}^\si,B_{v_2v_3}^\si\ran
\end{equation}
for any $ v_1,v_2,v_3,v_4\in T_{p,\th}M$.
Then $R_{\th\si}$ is a smooth section of the tensor bundle $T_\th^* M\otimes T_\th^* M\otimes T_\th^*M\otimes T_\th^*M$. Obviously
$R_{\th\si}(v_1,v_2,v_3,v_4)=-R_{\th\si}(v_2,v_1,v_3,v_4)=-R_{\th\si}(v_1,v_2,v_4,v_3)=R_{\th\si}(v_3,v_4,v_1,v_2)$, and
\begin{equation}
\aligned
&R_{\th\si}(v_1,v_2,v_3,v_4)+R_{\th\si}(v_2,v_3,v_1,v_4)+R_{\th\si}(v_3,v_1,v_2,v_4)\\
=&\lan B_{v_1v_3}^\si,B_{v_2v_4}^\si\ran-\lan B_{v_1v_4}^\si,B_{v_2v_3}^\si\ran+\lan B_{v_2v_1}^\si,B_{v_3v_4}^\si\ran\\
 &-\lan B_{v_2v_4}^\si,B_{v_3v_1}^\si\ran+\lan B_{v_3v_2}^\si,B_{v_1v_4}^\si\ran-\lan B_{v_3v_4}^\si,B_{v_1v_2}^\si\ran\\
 =&0.
 \endaligned
 \end{equation}
 Hence $R_{\th\si}$ is a curvature type tensor. Note that $R_{\th\si}=0$ whenever $m_\th^T\equiv 1$.

Let $\th,\si\in \text{Arg}^T$, and define
\begin{equation}\label{U}
U_{\th\si}(v_1,v_2,v_3,v_4)=:\big\lan (A^{\Phi_\th(v_3)}v_1)_\si, (A^{\Phi_\th(v_4)}v_2)_\si\big\ran-\big\lan (A^{\Phi_\th(v_4)}v_1)_\si,(A^{\Phi_\th(v_3)}v_2)_\si\big\ran
\end{equation}
for any $v_1,v_2,v_3,v_4\in T_{p,\th}M$. Here $(\cdot)_\si$ denotes the orthogonal projection of $T_p M$ onto $T_{p,\si}M$.
 Due to Lemma \ref{sec2}, $A^{\Phi_\th(v)}w+A^{\Phi_\th(w)}v=0$
for any $v,w\in T_{p,\th}M$, hence
$U_{\th\si}(v_1,v_2,v_3,v_4)=-U_{\th\si}(v_2,v_1,v_3,v_4)=-U_{\th\si}(v_1,v_2,v_4,v_3)=U_{\th\si}(v_3,v_4,v_1,v_2)$
and $U_{\th\si}=0$ whenever $m_\th^T\equiv 1$. Note, however,  that $U_{\th\si}$ does not satisfy a Bianchi type identity.
\bigskip

\begin{lem}\label{sec30}
Given $\th\in \text{Arg}^T$ taking values in $(0,\pi/2)$,
\begin{equation}\label{sec19}
\sum_{\si\in \text{Arg}^N,\si\neq \th}\k_{\th\si}R_{\th\si}(v,w,v,w)=3\sum_{\si\in \text{Arg}^T,\si\neq \th}\k_{\th\si}U_{\th\si}(v,w,v,w)
\end{equation}
for any $v,w\in T_{p,\th}M$,
and moreover
\begin{equation}\label{sec9}\aligned
\lan (\n_v B)_{ww},\Phi_\th(v)\ran=&(1/3)\sum_{\si\in \text{Arg}^N,\si\neq \th}\k_{\th\si}\left(\lan B_{vv}^\si,B_{ww}^\si\ran+2|B_{vw}^\si|^2\right)\\
&-2\sum_{\si\in \text{Arg}^T,\si\neq \th}\k_{\si\th}\lan B_{vw}^\si,\Phi_\si(A^{\Phi_\th(v)}w)_\si\ran.
\endaligned
\end{equation}

\end{lem}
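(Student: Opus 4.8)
The plan is to establish (\ref{sec9}) first by a single covariant differentiation of a nullity identity, and then to read off (\ref{sec19}) as the algebraic identity that survives once the Codazzi equation identifies two such derivative expressions. I would fix $p\in M$, $\th\in\text{Arg}^T\cap(0,\pi/2)$ and $v,w\in T_{p,\th}M$, and extend $v,w$ to local sections $V,W$ of the subbundle $T_\th M$ that are parallel in the direction $v$ at $p$ with respect to the connection induced on $T_\th M$ (so that the purely intrinsic $T_\th$-components of $\n_v V,\n_v W$ vanish at $p$; this is legitimate since the left side of (\ref{sec9}) is tensorial and hence independent of the extension). By Lemma \ref{sec2} the function $\langle B(W,W),\Phi_\th(V)\rangle$ vanishes identically, so differentiating it in the direction $v$ and using metric compatibility of the normal connection gives $\langle \n^\bot_v(B(W,W)),\Phi_\th(v)\rangle=-\langle B(w,w),\n^\bot_v(\Phi_\th(V))\rangle$. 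Writing $(\n_v B)_{ww}=\n^\bot_v(B(W,W))-2B(\n_v W,w)$ then reduces the whole left side of (\ref{sec9}) to computing $\n^\bot_v(\Phi_\th(V))$ together with the off-$\th$ part of $\n_v W$.

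The engine that produces the quadratic terms is that, up to the scalar $-\csc\th\sec\th$, the map $\Phi_\th$ is the normal component of the parallel projection $\mc{P}_0^\bot$ (read off from (\ref{ang1})). Since $\mc{P}_0^\bot$ commutes with $\ol{\n}$, the Gauss formula $\ol{\n}_v V=\n_v V+B_{vv}$ feeds one factor of $B$ into $\n^\bot_v(\Phi_\th(V))$, and re-expanding the resulting tangential and normal projections through (\ref{ang1}) distributes this over the angle spaces $\si$. The remaining first-order quantities, the components $(\n_v V)_\si$ and $(\n_v W)_\si$ with $\si\neq\th$, are precisely the connection coefficients controlled by Lemmas \ref{sec5} and \ref{sec7}, which rewrite them once more in terms of $B$ and of the shape operators $A^{\Phi_\th(\cdot)}$. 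Collecting the contributions indexed by $\si\in\text{Arg}^N$ yields the symmetric pairings $\langle B^\si_{vv},B^\si_{ww}\rangle$ and $|B^\si_{vw}|^2$ weighted by $\k_{\th\si}$, while those indexed by $\si\in\text{Arg}^T$ assemble into the shape-operator term $\langle B^\si_{vw},\Phi_\si(A^{\Phi_\th(v)}w)_\si\rangle$, and this is exactly (\ref{sec9}).

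For (\ref{sec19}) I would use that $\R^{n+m}$ is flat, so the Codazzi equation makes $\n B$ totally symmetric and hence $\langle (\n_v B)_{ww},\Phi_\th(v)\rangle=\langle (\n_w B)_{wv},\Phi_\th(v)\rangle$. Running the differentiation of the previous step a second time, but now differentiating the nullity identity in the direction $w$ (equivalently, polarizing (\ref{sec9})), produces a second closed expression for this same number. Subtracting the two expressions cancels the derivative of $B$ and leaves a purely algebraic identity; invoking the antisymmetry $A^{\Phi_\th(v)}w=-A^{\Phi_\th(w)}v$ that follows from Lemma \ref{sec2}, the surviving normal pairings recombine into the curvature-type tensor $R_{\th\si}(v,w,v,w)$ of (\ref{R}) and the surviving tangential pairings into $U_{\th\si}(v,w,v,w)$ of (\ref{U}), yielding (\ref{sec19}); the factor $3$ records that $\n B$ carries three symmetric slots across which the differentiation index may be transported.

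I expect the genuine difficulty to lie not in either conceptual move but in the bookkeeping between them: computing $\n^\bot_v(\Phi_\th(V))$ and splitting it across the angle spaces through (\ref{ang1}) so that the constants $\k_{\th\si}$ and the rational coefficients $\tfrac13$ and $2$ come out exactly, and, in the Codazzi step, cleanly separating the symmetric (normal, $R_{\th\si}$) contributions from the antisymmetric (tangential, $U_{\th\si}$) contributions so that the factor $3$ is reproduced faithfully. A secondary point that must be checked is that the intrinsic $T_\th$-connection terms, which are not expressible through $B$, really do drop out — either by the adapted choice of extensions or, equivalently, by tensoriality of the left side of (\ref{sec9}).
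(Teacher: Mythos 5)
Your overall architecture --- differentiate nullity identities coming from Lemmas \ref{sec2} and \ref{sec6}, convert the off-$\th$ components of the connection via Lemmas \ref{sec5} and \ref{sec7}, and invoke the Codazzi symmetry $(\n_v B)_{ww}=(\n_w B)_{wv}$ --- is indeed the paper's architecture. But your logical order is inverted, and the step asserting that a single differentiation lands ``exactly'' on (\ref{sec9}) would fail. Differentiating $\lan B_{ZZ},\Phi_\th(Y)\ran\equiv 0$ in the direction $v$ and processing the terms as you describe produces (with $u_\si:=(A^{\Phi_\th(v)}w)_\si$ and the preliminary fact $u_\th=0$, which itself requires Lemma \ref{sec6} and Weingarten) the paper's formula (\ref{sec17}):
\begin{equation*}
\lan(\n_v B)_{ww},\Phi_\th(v)\ran=\sum_{\si\neq \th} \k_{\th\si}\lan B_{vv}^\si,B_{ww}^\si\ran-2\sum_{\si\neq \th}\bigl(\k_{\si\th}\lan B_{vw},\Phi_\si(u_\si)\ran+\k_{\th\si}|u_\si|^2\bigr).
\end{equation*}
Note the coefficient $1$, not $1/3$, on $\lan B_{vv}^\si,B_{ww}^\si\ran$, the presence of $|u_\si|^2$ with weight $-2\k_{\th\si}$, and the complete absence of $|B_{vw}^\si|^2$. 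No choice of adapted extension can change this: the left side is tensorial, so (\ref{sec17}) and (\ref{sec9}) can simultaneously hold only modulo the algebraic identity (\ref{sec19}) (equivalently the paper's (\ref{sec24})) --- which is precisely what you have not yet proved at that stage. Your claim that the $1/3$ and the combination $\lan B_{vv}^\si,B_{ww}^\si\ran+2|B_{vw}^\si|^2$ ``come out exactly'' from the first differentiation is therefore not a bookkeeping difficulty but an impossibility.

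The correct order is the reverse of yours: one needs a \emph{second}, independent expression for the same number, obtained by differentiating the mixed nullity identity $\lan B_{ZY},\Phi_\th(Y)\ran\equiv 0$ (again from Lemma \ref{sec6}) in the direction $w$ --- observe that differentiating the \emph{same} identity $\lan B_{ZZ},\Phi_\th(Y)\ran$ in direction $w$ yields $\lan(\n_w B)_{ww},\Phi_\th(v)\ran$, the wrong quantity, and ``polarizing (\ref{sec9})'' is unavailable since (\ref{sec9}) is not yet established. This second differentiation gives the paper's (\ref{sec18}), in which $|u_\si|^2$ enters with weight $+\k_{\th\si}$ and $|B_{vw}^\si|^2$ replaces $\lan B_{vv}^\si,B_{ww}^\si\ran$; along the way one must also use that $\Phi_\si$ is an isometry with $\Phi_\si^2=-\Id$ for $\si\in(0,\pi/2)$ and $\Phi_\si=0$ for $\si\in\{0,\pi/2\}$ (the paper's (\ref{sec16})). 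Equating (\ref{sec17}) and (\ref{sec18}) via Codazzi then yields (\ref{sec24}), i.e. (\ref{sec19}); the factor $3$ is simply $1-(-2)$, the difference of the two weights of $|u_\si|^2$, not a count of the symmetric slots of $\n B$ as your heuristic suggests. Only after substituting (\ref{sec19}) back into (\ref{sec18}) do the coefficient $1/3$ and the term $2|B_{vw}^\si|^2$ of (\ref{sec9}) appear. Your adapted-extension device in the first paragraph is legitimate but also unnecessary: in the paper the within-$\th$ components of $\n_v Z$ drop out automatically because $u_\th=0$, not by a choice of frame.
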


\begin{proof}
Let
\begin{equation}
u_\si:=(A^{\Phi_\th(v)}w)_\si
\end{equation}
for each $\si\in \text{Arg}^T$, then Lemma \ref{sec2} tells us
\begin{equation}\label{sec8}
(A^{\Phi_\th(w)}v)_\si=-(A^{\Phi_\th(v)}w)_\si=-u_\si
\end{equation}
and moreover
\begin{equation}\aligned
U_{\th\si}(v,w,v,w)&=\big\lan (A^{\Phi_\th(v)}v)_\si, (A^{\Phi_\th(w)}w)_\si\big\ran-\big\lan (A^{\Phi_\th(w)}v)_\si,(A^{\Phi_\th(v)}w)_\si\big\ran\\
&=|u_\si|^2.
\endaligned
\end{equation}
In particular, combining the Weingarten equations and Lemma \ref{sec6} gives
$$|u_\th|^2=\lan u_\th,A^{\Phi_\th(v)}w\ran=\lan B_{u_\th w},\Phi_\th(v)\ran=0,$$
i.e. $u_\th=0$.

Let $Y,Z$ be local sections of $T_\th M$ such that $Y_p=v$, $Z_p=w$. By Lemma \ref{sec6}, $\lan B_{ZZ},\Phi_\th(Y)\ran\equiv 0$, hence
\begin{equation}\label{sec10}
\aligned
\lan (\n_v B)_{ww},\Phi_\th(v)\ran&=\n_v \lan B_{ZZ},\Phi_\th(Y)\ran-\lan B_{ww},\n_v \Phi_\th(Y)\ran-2\lan B_{\n_v Z,w},\Phi_\th(v)\ran\\
&=-\lan B_{ww},\n_v \Phi_\th(Y)\ran-2\lan B_{\n_v Z,w},\Phi_\th(v)\ran\\
&:=-I-2II\endaligned
\end{equation}
where
\begin{equation}\aligned\label{sec11}
I&=\sum_{\si\in \text{Arg}^N}\lan B_{ww}^\si,\n_v \Phi_\th(Y)\ran=\sum_{\si\in \text{Arg}^N,\si\neq \th}(S_{\th\si}^N)_{\Phi_\th(v),B_{ww}^\si}(v)\\
&=\sum_{\si\in \text{Arg}^N,\si\neq \th}\left(\k_{\th\si}\lan B_{v,\Phi_\th^2(v)},B_{ww}^\si\ran-\k_{\si\th}\lan B_{v,\Phi_\si(B_{ww}^\si)},\Phi_\th(v)\ran \right)\\
&=-\sum_{\si\in \text{Arg}^N,\si\neq \th}\k_{\th\si}\lan B_{vv}^\si,B_{ww}^\si\ran
\endaligned
\end{equation}
(Lemma \ref{sec6}, Lemma \ref{sec2}, Lemma \ref{sec7} and $\Phi_\th^2=-\mathbf{Id}$ have been used in this calculation) and
\begin{equation}\aligned\label{sec12}
II&=\lan B_{\n_v Z,w},\Phi_\th(v)\ran=\sum_{\si\in \text{Arg}^T} \lan \n_v Z,u_\si\ran=\sum_{\si\in \text{Arg}^T,\si\neq \th}(S_{\th\si})_{w u_\si}(v)\\
&=\sum_{\si\in \text{Arg}^T,\si\neq \th}\left(\k_{\si\th}\lan B_{vw},\Phi_\si(u_\si)\ran-\k_{\th\si}\lan B_{v u_\si},\Phi_\th(w)\ran\right)\\
&=\sum_{\si\in \text{Arg}^T,\si\neq \th}\left(\k_{\si\th}\lan B_{vw},\Phi_\si(u_\si)\ran+\k_{\th\si}|u_\si|^2\right).
\endaligned
\end{equation}
(Here we have used the Weingarten equations, (\ref{sec8}) and Lemma \ref{sec5}.) Substituting (\ref{sec11}) and (\ref{sec12}) into (\ref{sec10}) implies
\begin{equation}\label{sec17}
\lan(\n_v B)_{ww},\Phi_\th(v)\ran=\sum_{\si\in \text{Arg}^N,\si\neq \th} \k_{\th\si}\lan B_{vv}^\si,B_{ww}^\si\ran-2\sum_{\si\in \text{Arg}^T,\si\neq \th}\left(\k_{\si\th}\lan B_{vw},\Phi_\si(u_\si)\ran+\k_{\th\si}|u_\si|^2\right).
\end{equation}

Again applying Lemma \ref{sec6} gives $\lan B_{ZY},\Phi_\th(Y)\ran\equiv 0$, hence
\begin{equation}\label{sec13}\aligned
\lan (\n_w B)_{wv},\Phi_\th(v)\ran=&\n_w \lan B_{ZY},\Phi_\th(Y)\ran-\lan B_{wv},\n_w \Phi_\th(Y)\ran\\
&-\lan B_{\n_w Z,v},\Phi_\th(v)\ran-\lan B_{w,\n_w Y},\Phi_\th(v)\ran\\
=&-\lan B_{wv},\n_w \Phi_\th(Y)\ran-\lan B_{w,\n_w Y},\Phi_\th(v)\ran\\
:=&-I-II
\endaligned
\end{equation}
where
\begin{equation}\label{sec14}\aligned
I=&\sum_{\si\in \text{Arg}^N}\lan B_{wv}^\si,\n_w \Phi_\th(Y)\ran=\sum_{\si\in \text{Arg}^N,\si\neq \th}(S_{\th\si}^N)_{\Phi_\th(v),B_{wv}^\si}(w)\\
=&\sum_{\si\in \text{Arg}^N,\si\neq \th}\left( \k_{\th\si}\lan B_{w,\Phi_\th^2(v)},B_{wv}^\si\ran-\k_{\si\th}\lan B_{w,\Phi_\si(B_{wv}^\si)},\Phi_\th(v)\ran \right)\\
=&\sum_{\si\in \text{Arg}^N,\si\neq \th}\left(-\k_{\th\si}|B_{wv}^\si|^2-\k_{\si\th}\lan u_\si,\Phi_\si(B_{wv}^\si)\ran\right)
\endaligned
\end{equation}
and
\begin{equation}\label{sec15}\aligned
II=&\lan B_{w,\n_w Y},\Phi_\th(v)\ran=\sum_{\si\in \text{Arg}^T}\lan \n_w Y,u_\si\ran=\sum_{\si\in \text{Arg}^T,\si\neq \th}(S_{\th\si})_{v u_\si}(w)\\
=&\sum_{\si\in \text{Arg}^T,\si\neq \th}\left(\k_{\si\th}\lan B_{wv},\Phi_\si(u_\si)\ran-\k_{\th\si}\lan B_{w u_\si},\Phi_\th(v)\ran\right)\\
=&\sum_{\si\in \text{Arg}^T,\si\neq \th}\left(\k_{\si\th}\lan B_{wv},\Phi_\si(u_\si)\ran-\k_{\th\si}|u_\si|^2\right).
\endaligned
\end{equation}
If $\si\neq 0,\pi/2$, then $\Phi_{\si}$ is isometric and $\Phi_\si^2=-\mathbf{Id}$. Hence
$$\lan u_\si,\Phi_\si(B_{wv}^\si)\ran=\lan \Phi_\si(u_\si),\Phi_\si^2(B_{wv}^\si)\ran=-\lan B_{wv}^\si,\Phi_\si(u_\si)\ran.$$
On the other hand, $\Phi_\si=0$ whenever $\si=0$ or $\pi/2$. Therefore
\begin{equation}\label{sec16}
-\sum_{\si\in \text{Arg}^N,\si\neq \th}\k_{\si\th}\lan u_\si,\Phi_\si(B_{wv}^\si)\ran= \sum_{\si\in \text{Arg}^T,\si\neq \th}\k_{\si\th}\lan B_{wv},\Phi_\si(u_\si)\ran.
\end{equation}
Substituting (\ref{sec14})-(\ref{sec16}) into (\ref{sec13}) yields
\begin{equation}\label{sec18}
\lan (\n_w B)_{wv},\Phi_\th(v)\ran=\sum_{\si\in \text{Arg}^N,\si\neq \th}\k_{\th\si}|B_{wv}^\si|^2+\sum_{\si\in \text{Arg}^T,\si\neq \th}\left(\k_{\th\si}|u_\si|^2-2\k_{\si\th}\lan B_{wv},\Phi_\si(u_\si)\ran\right).
\end{equation}

The Codazzi equations imply $(\n_v B)_{ww}=(\n_w B)_{wv}$. Hence by comparing the right hand sides of (\ref{sec17}) and (\ref{sec18}) we arrive at
\begin{equation}\label{sec24}
\sum_{\si\in \text{Arg}^N,\si\neq \th}\k_{\th\si}\left(\lan B_{vv}^\si,B_{ww}^\si\ran-|B_{vw}^\si|^2\right)=3\sum_{\si\in \text{Arg}^T,\si\neq \th}\k_{\th\si}|u_\si|^2
\end{equation}
and then (\ref{sec19}) immediately follows from the definition of $R_{\th\si}$ and $U_{\th\si}$. Finally (\ref{sec9}) is obtained by substituting (\ref{sec24}) into
(\ref{sec18}).

\end{proof}

\bigskip

\begin{lem}\label{sec31}
We consider $\th\in \text{Arg}^T$ taking values in $(0,\pi/2)$ and $\si\in \text{Arg}^T$ such that $\th\neq \si$. If $U_{\th\si}(v_1,v_2,v_1,v_2)=0$ holds
for any $v_1,v_2\in T_{p,\th}M$, then
\begin{equation}\label{sec20}\aligned
\lan (\n_v B)_{ww},\Phi_\th(v)\ran=&-2\sum_{\tau\in \text{Arg}^T,\tau\neq \th}\k_{\tau\th}\lan B_{vw}^\tau,\Phi_\tau(A^{\Phi_\th(v)}w)_\tau\ran\\
&+\sum_{\tau\in \text{Arg}^N,\tau\neq \th}\k_{\th\tau}|B_{vw}^\tau|^2+\sum_{\tau\in \text{Arg}^T,\tau\neq \th}\k_{\th\tau}|(A^{\Phi_\th(v)}w)_\tau|^2
\endaligned
\end{equation}
for any $v\in T_{p,\th}M$ and $w\in T_{p,\si}M$.
\end{lem}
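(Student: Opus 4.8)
The plan is to establish the tensorial identity (\ref{sec20}) by retracing, almost verbatim, the derivation of formula (\ref{sec18}) inside the proof of Lemma \ref{sec30}, with the hypothesis $U_{\th\si}(v_1,v_2,v_1,v_2)=0$ taking over the role that Lemma \ref{sec6} played there. First I would reinterpret the hypothesis. Exactly as in the proof of Lemma \ref{sec30}, Lemma \ref{sec2} gives $A^{\Phi_\th(v_1)}v_1=0$ and $A^{\Phi_\th(v_1)}v_2=-A^{\Phi_\th(v_2)}v_1$, so that $U_{\th\si}(v_1,v_2,v_1,v_2)=|(A^{\Phi_\th(v_1)}v_2)_\si|^2$. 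Hence the hypothesis is equivalent to $(A^{\Phi_\th(v_1)}v_2)_\si=0$ for all $v_1,v_2\in T_{p,\th}M$, and by the Weingarten equation this says precisely that $\lan B_{v_2w},\Phi_\th(v_1)\ran=0$ for all $v_1,v_2\in T_{p,\th}M$ and $w\in T_{p,\si}M$; equivalently, $B_{vw}^\th=0$ whenever $v\in T_\th M$ and $w\in T_\si M$. Writing $u_\tau:=(A^{\Phi_\th(v)}w)_\tau$, the same computation also yields $u_\th=0$.

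Next, by the Codazzi equation $(\n_vB)_{ww}=(\n_wB)_{wv}$, I would compute the left-hand side of (\ref{sec20}) as $\lan(\n_wB)_{wv},\Phi_\th(v)\ran$, differentiating along $w\in T_\si M$ and choosing local sections $Y$ of $T_\th M$, $Z$ of $T_\si M$ with $Y_p=v$, $Z_p=w$. Expanding the covariant derivative as in (\ref{sec13}) produces four terms. The term $\lan B_{\n_wZ,v},\Phi_\th(v)\ran$ vanishes by (\ref{sec1}) since $v\in T_{p,\th}M$; the term $\n_w\lan B_{ZY},\Phi_\th(Y)\ran$ is the one that genuinely needs the hypothesis and is discussed below; and the two remaining terms $I=\lan B_{wv},\n_w\Phi_\th(Y)\ran$ and $II=\lan B_{w,\n_wY},\Phi_\th(v)\ran$ are treated by the connection lemmas. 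In $I$ the diagonal ($\tau=\th$) part drops because $B_{wv}^\th=0$ and the off-diagonal parts are rewritten by Lemma \ref{sec7}; in $II$ the diagonal part drops because $u_\th=0$ and the off-diagonal parts are rewritten by Lemma \ref{sec5}. This reproduces the right-hand side of (\ref{sec18}) with the summation index $\si$ relabelled as $\tau$, and the anti-symmetry $\lan u_\tau,\Phi_\tau(B_{wv}^\tau)\ran=-\lan B_{wv}^\tau,\Phi_\tau(u_\tau)\ran$ used in (\ref{sec16}) then rearranges it into the claimed form (\ref{sec20}).

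The main obstacle is precisely the term $\n_w\lan B_{ZY},\Phi_\th(Y)\ran$. In Lemma \ref{sec30} the corresponding quantity vanished identically as a section (by Lemma \ref{sec6}, since there both slots lay in $T_\th M$), so its derivative dropped for free. Here the reinterpreted hypothesis gives $\lan B_{ZY},\Phi_\th(Y)\ran=\lan B_{ZY}^\th,\Phi_\th(Y)\ran$, and the mixed component $B_{ZY}^\th$ is controlled only at $p$ by the pointwise assumption on $v,w$. The delicate point is therefore to arrange that $B^\th$ of a $T_\th M$-field against a $T_\si M$-field vanishes along the whole $w$-direction --- i.e. to use the condition $U_{\th\si}=0$ for the local sections $Y\in T_\th M$, $Z\in T_\si M$ rather than only for the fixed vectors $v,w$ --- so that $\lan B_{ZY},\Phi_\th(Y)\ran$ is identically zero and its derivative indeed vanishes. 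Once this is secured, the rest of the argument is the purely algebraic template of Lemma \ref{sec30}.
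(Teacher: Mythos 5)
Your reduction of the hypothesis is correct and in fact slightly slicker than the paper's: by Lemma \ref{sec2}, $A^{\Phi_\th(v_1)}v_1=0$ and $A^{\Phi_\th(v_2)}v_1=-A^{\Phi_\th(v_1)}v_2$, so $U_{\th\si}(v_1,v_2,v_1,v_2)=|(A^{\Phi_\th(v_1)}v_2)_\si|^2$, which yields $u_\th=0$ and $B_{vw}^\th=0$ at $p$ (the paper gets the same two facts by testing $U_{\th\si}$ on the pairs $(u_\th,v)$ and $(z,v)$); and your treatment of $I$ and $II$ via Lemma \ref{sec7} and Lemma \ref{sec5}, with the $\tau=\th$ contributions killed by those two facts and the anti-symmetry $\lan u_\tau,\Phi_\tau(B_{vw}^\tau)\ran=-\lan B_{vw}^\tau,\Phi_\tau(u_\tau)\ran$ handling the passage between sums over $\text{Arg}^N$ and $\text{Arg}^T$, reproduces the paper's computation (\ref{sec22})--(\ref{sec23}) exactly. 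The flaw is in what you single out as the ``main obstacle.'' The identity $\lan B_{ZY},\Phi_\th(Y)\ran\equiv 0$ does \emph{not} need the hypothesis $U_{\th\si}=0$, at $p$ or anywhere else: formula (\ref{sec1}) of Lemma \ref{sec2} holds with an \emph{arbitrary} first slot $u\in T_qM$ and $v\in T_{q,\th}M$ (its proof differentiates $|\mc{P}_0^\bot X|^2\equiv\cos^2\th$ along any direction $u$), so at every point $q$ one may take $u=Z_q$, $v=Y_q$ and conclude $\lan B_{Z_qY_q},\Phi_\th(Y_q)\ran=0$. This is precisely how the paper discharges the term (``Lemma \ref{sec2} implies $\lan B_{YZ},\Phi_\th(Y)\ran\equiv 0$''). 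You yourself invoke (\ref{sec1}) in exactly this unrestricted-first-slot form to kill $\lan B_{\n_wZ,v},\Phi_\th(v)\ran$, so the oversight is internal to your own argument: you conflated the vanishing of the single pairing $\lan B_{ZY},\Phi_\th(Y)\ran$ (orthogonality to the one vector $\Phi_\th(Y)$, which comes for free on any CJA submanifold) with the vanishing of the full component $B_{ZY}^\th$ (which is what the hypothesis controls, and only at $p$).

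Consequently your proposed repair --- imposing $U_{\th\si}=0$ for the sections $Y,Z$ on a whole neighborhood rather than for vectors at $p$ --- is both unnecessary and illegitimate: the lemma's hypothesis is pointwise at $p$, and a proof requiring it on a neighborhood establishes a strictly weaker statement. (It would happen to suffice for the paper's applications, where the hypothesis is supplied at every point by Lemma \ref{sec30}, but it is not a proof of the lemma as stated, and you offer no mechanism by which the pointwise assumption would propagate.) Once the term $\n_w\lan B_{ZY},\Phi_\th(Y)\ran$ is killed by (\ref{sec1}), the remainder of your outline --- Codazzi symmetry, the expansions of $I$ and $II$, and the final rearrangement --- assembles into (\ref{sec20}) exactly as in the paper's proof.
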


\begin{proof}
In the sequel we make use of the abbreviation $u_\tau:=(A^{\Phi_\th(v)}w)_\tau$ for any $\tau\in \text{Arg}^T$. By the definition of $U_{\th\si}$,
$$\aligned
0&=U_{\th\si}(u_\th,v,u_\th,v)\\
&=\big\lan (A^{\Phi_\th(u_\th)}u_\th)_\si,(A^{\Phi_\th(v)}v)_\si\big\ran-\big\lan (A^{\Phi_\th(v)}u_\th)_\si,(A^{\Phi_\th(u_\th)}v)_\si\big\ran\\
&=|(A^{\Phi_\th(v)}u_\th)_\si|^2
\endaligned$$
i.e. $(A^{\Phi_\th(v)}u_\th)_\si=0$. Hence
$$0=\lan A^{\Phi_\th(v)}u_\th,w\ran=\lan A^{\Phi_\th(v)}w,u_\th\ran=|u_\th|^2$$
i.e. $u_\th=0$. Similarly, one can deduce that $B_{vw}^\th=0$.

Let $Y$ be a local smooth section of $T_\th M$ and $Z$ be a local smooth section of $T_\si M$, such that $Y_p=v$, $Z_p=w$. Lemma \ref{sec2} implies
$\lan B_{YZ},\Phi_\th(Y)\ran\equiv 0$, hence
\begin{equation}\label{sec21}\aligned
\lan (\n_v B)_{ww},\Phi_\th(v)\ran=&\lan (\n_w B)_{vw},\Phi_\th(v)\ran\\
=&\lan \n_w \lan B_{YZ},\Phi_\th(Y)\ran-\lan B_{vw},\n_w \Phi_\th(Y)\ran\\
&-\lan B_{\n_w Y,w},\Phi_\th(v)\ran-\lan B_{v,\n_w Z},\Phi_\th(v)\ran\\
=&-\lan B_{vw},\n_w \Phi_\th(Y)\ran-\lan B_{\n_w Y,w},\Phi_\th(v)\ran\\
:=&-I-II
\endaligned
\end{equation}
where
\begin{equation}\label{sec22}\aligned
I&=\sum_{\tau\in \text{Arg}^N}\lan B_{vw}^\tau,\n_w \Phi_\th(Y)\ran=\sum_{\tau\in \text{Arg}^N,\tau\neq \th}(S_{\th\tau}^N)_{\Phi_\th(v),B_{vw}^\tau}(w)\\
&=\sum_{\tau\in \text{Arg}^N,\tau\neq \th}\left(\k_{\th\tau}\lan B_{w,\Phi_\th^2(v)},B_{vw}^\tau\ran-\k_{\tau\th}\lan B_{w,\Phi_\tau(B_{vw}^\tau)},\Phi_\th(v)\ran\right)\\
&=-\sum_{\tau\in \text{Arg}^N,\tau\neq \th}\left(\k_{\th\tau}|B_{vw}^\tau|^2+\k_{\tau\th}\lan \Phi_\tau(B_{vw}^\tau),u_\tau\ran\right)\\
&=\sum_{\tau\in \text{Arg}^N,\tau\neq \th}\k_{\tau\th}\lan B_{vw}^\tau,\Phi_\tau(u_\tau)\ran-\sum_{\tau\in \text{Arg}^N,\tau\neq \th}\k_{\th\tau}|B_{vw}^\tau|^2
\endaligned
\end{equation}
and
\begin{equation}\label{sec23}\aligned
II&=\sum_{\tau\in \text{Arg}^T}\lan \n_w Y,u_\tau\ran=\sum_{\tau\in \text{Arg}^T,\tau\neq \th}(S_{\th\tau})_{v,u_\tau}(w)\\
&=\sum_{\tau\in \text{Arg}^T,\tau\neq \th}\left(\k_{\tau\th}\lan B_{vw},\Phi_\tau(u_\tau)\ran-\k_{\th\tau}\lan B_{w u_\tau},\Phi_\th(v)\ran\right)\\
&=\sum_{\tau\in \text{Arg}^T,\tau\neq \th}\k_{\tau\th}\lan B_{vw}^\tau,\Phi_\tau(u_\tau)\ran-\sum_{\tau\in \text{Arg}^T,\tau\neq \th}\k_{\th\tau}|u_\tau|^2.
\endaligned
\end{equation}
Substituting (\ref{sec22}) and (\ref{sec23}) into (\ref{sec21}) yields (\ref{sec20}).
\end{proof}

\bigskip

\begin{lem}\label{sec32}
If $\th\in \text{Arg}^T\cap \text{Arg}^N$ and $\th\equiv 0$ or $\pi/2$, then for any $v\in T_{p,\th}M$, $\nu\in \G(N_\th M)$ and $w\in T_p M$,
\begin{equation}\label{sec26}
\lan(\n_v B)_{ww},\nu\ran=-2\sum_{\si\in Arg^T}\k_{\si\th}\lan B_{vw}^\si,\Phi_\si(A^\nu w)_\si\ran.
\end{equation}
\end{lem}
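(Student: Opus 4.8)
The plan is to follow the template of Lemmas \ref{sec30} and \ref{sec31}, except that the degenerate nullity provided by Lemma \ref{sec25} (available precisely because $\th\equiv 0$ or $\pi/2$) now replaces the role played by Lemma \ref{sec6}. The essential first move is the Codazzi equation $(\n_v B)_{ww}=(\n_w B)_{wv}$: the point of this rewriting is that the angle direction $v\in T_{p,\th}M$ is transferred into a slot of $B$, so that Lemma \ref{sec25} applies. Concretely, I would take a local section $Y$ of $T_\th M$ with $Y_p=v$, a local section $W$ of $TM$ with $W_p=w$, and extend $\nu$ to a local section of $N_\th M$; Lemma \ref{sec25} then gives $\lan B_{WY},\nu\ran\equiv 0$ near $p$.

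Differentiating this identity in the direction $w$, using metric compatibility and the product rule $\n_w B_{WY}=(\n_w B)_{WY}+B_{\n_w W,Y}+B_{W,\n_w Y}$, and discarding the term $\lan B_{\n_w W,Y},\nu\ran$ (which vanishes at $p$ by Lemma \ref{sec25}, since $v\in T_{p,\th}M$ and $\nu\in N_{p,\th}M$), one is left with
\begin{equation*}
\lan (\n_w B)_{wv},\nu\ran=-\lan B_{w,\n_w Y},\nu\ran-\lan B_{wv},\n_w\nu\ran=:-I-II.
\end{equation*}
Here $I$ is a purely tangential term and $II$ a purely normal one, and the task reduces to showing $I=II=\sum_{\si\neq\th}\k_{\si\th}\lan B_{vw}^\si,\Phi_\si(u_\si)\ran$ with $u_\si:=(A^\nu w)_\si$.

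For $I$, I would apply the Weingarten equation to write $I=\lan \n_w Y,A^\nu w\ran$ and decompose $A^\nu w=\sum_\si u_\si$. The diagonal piece drops out because $u_\th=0$: indeed $\lan A^\nu w,e\ran=\lan B_{we},\nu\ran=0$ for every $e\in T_{p,\th}M$ by Lemma \ref{sec25}. Each remaining piece is $\lan \n_w Y,u_\si\ran=(S_{\th\si})_{v,u_\si}(w)$, and Lemma \ref{sec5} together with $\Phi_\th=0$ collapses this to $\k_{\si\th}\lan B_{vw}^\si,\Phi_\si(u_\si)\ran$, giving the desired expression for $I$. For $II$, I would decompose $\n_w\nu=\sum_\tau(\n_w\nu)^\tau$; the diagonal term is killed by $B_{vw}^\th=0$ (Lemma \ref{sec25}). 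For $\tau\neq\th$, Lemma \ref{sec7} with $\Phi_\th=0$ yields $\lan \n_w\nu,\mu\ran=-\k_{\tau\th}\lan B_{w,\Phi_\tau(\mu)},\nu\ran$ for $\mu\in N_{p,\tau}M$; rewriting $\lan B_{w,\Phi_\tau(\mu)},\nu\ran=\lan A^\nu w,\Phi_\tau(\mu)\ran=\lan u_\tau,\Phi_\tau(\mu)\ran$ and then flipping via $\Phi_\tau^2=-\Id$ and the isometry of $\Phi_\tau$ to get $\lan u_\tau,\Phi_\tau(\mu)\ran=-\lan \Phi_\tau(u_\tau),\mu\ran$, one obtains the clean identity $(\n_w\nu)^\tau=\k_{\tau\th}\Phi_\tau(u_\tau)$. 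Hence $II=\sum_{\tau\neq\th}\k_{\tau\th}\lan B_{vw}^\tau,\Phi_\tau(u_\tau)\ran=I$, so that $\lan(\n_v B)_{ww},\nu\ran=-I-II=-2I$, which is exactly (\ref{sec26}) after noting that the $\si=\th$ summand contributes zero (since $B_{vw}^\th=0$ and $\Phi_\th=0$), allowing $\sum_{\si\neq\th}$ to be rewritten as $\sum_{\si\in\text{Arg}^T}$.

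The hard part is the reorganization of term $II$: one must check that the $S^N$-contraction reassembles exactly into $\k_{\tau\th}\Phi_\tau(u_\tau)$, and the sign produced by the anti-involution identity $\Phi_\tau^2=-\Id$ is precisely what forces $II$ to coincide with $I$ rather than to cancel it (had the sign gone the other way, the right-hand side would collapse to $0$). A secondary point needing care is the treatment of the degenerate angles $0$ and $\pi/2$, where $\Phi$ is set to $0$: there both the $S_{\th\si}$ and the $S^N_{\th\tau}$ contractions vanish identically, so those indices contribute nothing on either side and do not disturb the identity.
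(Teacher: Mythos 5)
Your proposal is correct and follows essentially the same route as the paper's proof: Codazzi symmetry to move $v$ into a slot of $B$, differentiation of the nullity identity $\lan B_{YZ},\nu\ran\equiv 0$ supplied by Lemma \ref{sec25}, and evaluation of the tangential and normal terms via Lemma \ref{sec5} and Lemma \ref{sec7} with $\Phi_\th=0$ killing the diagonal contributions, the sign from $\Phi_\si^2=-\Id$ making the two terms coincide rather than cancel, so that the total is $-2\sum_\si\k_{\si\th}\lan B_{vw}^\si,\Phi_\si(u_\si)\ran$. The only differences are cosmetic (your labels $I$ and $II$ are swapped relative to the paper, and you repackage the normal term as the identity $(\n_w\nu)^\tau=\k_{\tau\th}\Phi_\tau(u_\tau)$); incidentally, your computation correctly retains the factor $\k_{\si\th}$ that is typographically dropped in the last line of the paper's (\ref{sec27}).
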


\begin{proof}
Let $Y$ be a local section of $T_\th M$ and $Z$ be a local section of $TM$, such that $Y_p=v$ and $Z_p=w$, then Lemma \ref{sec25} tells us $\lan B_{YZ},\nu\ran\equiv 0$. Therefore
\begin{equation}\aligned\label{sec29}
\lan (\n_v B)_{ww},\nu\ran=&\lan (\n_w B)_{vw},\nu\ran\\
=&\n_w\lan B_{YZ},\nu\ran-\lan B_{vw},\n_w \nu\ran-\lan B_{\n_w Y,w},\nu\ran-\lan B_{v,\n_w Z},\nu\ran\\
=&-\lan B_{vw},\n_w \nu\ran-\lan B_{\n_w Y,w},\nu\ran\\
:=&-I-II
\endaligned
\end{equation}
where
\begin{equation}\label{sec27}\aligned
I&=\sum_{\si\in \text{Arg}^N,\si\neq \th} \lan B_{vw}^\si,\n_w \nu\ran=\sum_{\si\in \text{Arg}^N,\si\neq \th} (S_{\th\si}^N)_{\nu,B_{vw}^\si}(w)\\
&=-\sum_{\si\in \text{Arg}^N,\si\neq \th}\k_{\si\th}\lan B_{w,\Phi_\si(B_{vw}^\si)},\nu\ran=-\sum_{\si\in \text{Arg}^N} \k_{\si\th}\lan \Phi_\si(B_{vw}^\si),u_\si\ran\\
&=\sum_{\si\in \text{Arg}^T} \lan B_{vw}^\si,\Phi_\si (u_\si)\ran
\endaligned
\end{equation}
and
\begin{equation}\label{sec28}\aligned
II&=\sum_{\si\in \text{Arg}^T,\si\neq \th}\lan \n_w Y,u_\si\ran=\sum_{\si\in \text{Arg}^T,\si\neq \th} (S_{\th\si})_{v u_\si}(w)\\
&=\sum_{\si\in \text{Arg}^T,\si\neq \th} \k_{\si\th}\lan B_{wv},\Phi_\si(u_\si)\ran=\sum_{\si\in \text{Arg}^T} \k_{\si\th}\lan B_{vw}^\si,\Phi_\si(u_\si)\ran.
\endaligned
\end{equation}
Here $u_\si:=(A^\nu w)_\si$, and $u_\th=B_{vw}^\th=0$ is a direct corollary of Lemma \ref{sec25}. Substituting (\ref{sec27}) and (\ref{sec28}) into (\ref{sec29}), we arrive at
at (\ref{sec26}).

\end{proof}

\bigskip
\subsection{Vanishing theorems}
With the above lemmas, we can now derive  vanishing theorems for the second fundamental form of submanifolds with CJA.

\begin{thm}\label{tri}
Let $M^n$ be a submanifold of $\R^{n+m}$ with CJA relative to a fixed $m$-plane $Q_0$ ($M$ need not be complete), then

(i) If $g^T=g^N=1$, then $M$ has to be an affine linear subspace;

(ii) If $g^T=1, g^N=2, \pi/2\notin \Arg^T$ and $M$ has parallel mean curvature, then $M$ is affine linear;

(iii) If $g^T=2, g^N=1, \pi/2\notin \Arg^N$, and $M$ has parallel mean curvature, then $M$ is affine linear;

(iv) If $g^T=g^N=2$, $\Arg^N\neq \{0,\pi/2\}$, and $M$ is minimal, then $M$ is affine linear.
\end{thm}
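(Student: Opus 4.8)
The goal in every case is to show that the second fundamental form $B$ vanishes identically; once $B\equiv 0$ the submanifold is totally geodesic in $\R^{n+m}$, hence an affine $n$-plane. Throughout I work at a fixed point $p$ and decompose $B$ into blocks according to the splittings $T_pM=\bigoplus_\th T_{p,\th}M$ and $N_pM=\bigoplus_\th N_{p,\th}M$. Case (i) is immediate: if the common nonzero angle $\th$ lies in $(0,\pi/2)$ then $T_pM=T_{p,\th}M$ and $N_pM=N_{p,\th}M$, so Lemma \ref{sec6} gives $\lan B_{uv},\nu\ran=0$ for all tangent $u,v$ and all normal $\nu$, i.e. $B\equiv 0$; if instead $\th\equiv 0$ or $\pi/2$ the normal space is forced to be the constant plane $Q_0$ or $Q_0^\bot$, so $TM$ is constant and $M$ is already affine. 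No curvature hypothesis enters here.

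For (ii) we have $\Arg^T=\{\th\}$ with $\th\in(0,\pi/2)$ (using $\pi/2\notin\Arg^T$) and $\Arg^N=\{0,\th\}$. Since $T_pM=T_{p,\th}M$, Lemma \ref{sec6} kills the $N_\th M$--component of $B$, so $B$ takes values in $N_0M$. In formula (\ref{sec9}) the $\Arg^T$--sum is then empty and only $\si=0$ survives in the $\Arg^N$--sum, giving $\lan(\n_vB)_{ww},\Phi_\th(v)\ran=\tfrac13\k_{\th 0}\big(\lan B_{vv},B_{ww}\ran+2|B_{vw}|^2\big)$. Summing $w$ over an orthonormal frame turns the left-hand side into $\lan\n_v^\bot H,\Phi_\th(v)\ran$, which vanishes by the parallel mean curvature assumption, while the right-hand side collapses to $\tfrac13\k_{\th 0}\big(\lan B_{vv},H\ran+2\sum_j|B_{ve_j}|^2\big)$. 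Summing also over $v$ and using $\k_{\th 0}\neq 0$ yields $|H|^2+2|B|^2=0$, whence $B\equiv 0$.

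For (iv) the hypotheses $g^T=g^N=2$ and $\Arg^N\neq\{0,\pi/2\}$ guarantee at least one interior angle $\th\in(0,\pi/2)$, and Lemmas \ref{sec6} and \ref{sec25} fix the block structure of $B$ relative to $TM=T_0M\oplus T_\th M$ (or $T_{\th_1}M\oplus T_{\th_2}M$) and $NM=N_0M\oplus N_\th M$. The engine is the constraint equation (\ref{sec19}) at an interior angle: contracting it over an orthonormal frame of $T_{p,\th}M$ and using $H=0$ to drop the partial traces turns it into a relation of the shape $-|B(T_\th,T_\th)|^2=3\,|B(T_0,T_\th)^{\,\th}|^2$ (together with its analogue exchanging the two angles when both are interior). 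Since the right-hand side is nonnegative, every such block is forced to vanish; when both Jordan angles are interior this already gives $B\equiv 0$, since the surviving mixed block then has vanishing components in both $N_{\th_1}M$ and $N_{\th_2}M$. When one angle is degenerate the same contraction annihilates all blocks except the single diagonal block $\beta:=B(T_0M,T_0M)$, valued in $N_\th M$ and trace-free by minimality.

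Part (iii) and these degenerate sub-cases of (iv) lead to a common obstacle. In (iii), where $\Arg^N=\{\th\}$ and $\Arg^T=\{0,\th\}$ with $\th\in(0,\pi/2)$, Lemma \ref{sec6} gives $B(T_{p,\th}M,T_{p,\th}M)=0$, while the constraint (\ref{sec19}), whose right-hand side reduces to $\k_{\th 0}|(A^{\Phi_\th(v)}w)_0|^2$, forces $(A^{\Phi_\th(v)}w)_0=0$, i.e. $B(T_{p,0}M,T_{p,\th}M)=0$; exactly as in (iv) this leaves only $\beta:=B(T_0M,T_0M)\in N_\th M$, with $T_\th M$ in the relative nullity of $B$. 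The difficulty is that the first-order Codazzi identities of Lemmas \ref{sec30}--\ref{sec32} are \emph{tautological} on $\beta$: once the off-diagonal blocks vanish, a direct computation shows that both sides of (\ref{sec9}) and of Lemma \ref{sec31} collapse to the same splitting-tensor expression, so they yield no new constraint. To finish I would differentiate the defining constant-angle relation (\ref{ang1}) a second time along the transverse directions $T_0M$ — exploiting that $\Phi_\th$ and $\th$ are constant on $M$ while $T_\th M$ tilts into $T_0M$ at a rate governed by $\beta$ — to produce a relation invisible to the contracted lemmas, and then invoke the mean curvature hypothesis (parallel in (iii), vanishing in (iv)) to conclude $\beta\equiv 0$, hence $B\equiv 0$. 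Making this last step effective \emph{without} a completeness assumption on $M$ is the crux; it is precisely the excluded configuration $\Arg^N=\{0,\pi/2\}$, in which every anti-involution $\Phi_\th$ degenerates and no interior angle survives to drive the computation, where this rigidity can break down.
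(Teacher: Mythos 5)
Your treatment of (i), of (ii), and of case (iv) when both Jordan angles lie in $(0,\pi/2)$ is correct and essentially the paper's argument; in (ii) your double trace of (\ref{sec9}) is a harmless repackaging of the paper's route through (\ref{sec19}). But there is a genuine gap exactly where you declare one: case (iii) and the degenerate sub-case of (iv) (one angle equal to $0$ or $\pi/2$) are left unproved, and your diagnosis that Lemmas \ref{sec30}--\ref{sec32} are \emph{tautological} on the residual block $\beta:=B(T_0M,T_0M)$ is false. The error is a conflation of two different blocks: for $w\in T_{p,\th}M$ the quantity $(A^{\Phi_\th(v)}w)_0$ is the mixed block, which you correctly showed vanishes; but Lemma \ref{sec31} is to be applied with $v\in T_{p,\th}M$ and $w\in T_{p,0}M$ (its hypothesis $U_{\th 0}(v_1,v_2,v_1,v_2)=0$ is precisely what your first step established), and there $(A^{\Phi_\th(v)}w)_0$ has components $\lan B_{we_j},\Phi_\th(v)\ran$ with $w,e_j\in T_{p,0}M$ --- that is, exactly $\beta$ paired with $\Phi_\th(v)$, which is not yet known to vanish. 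The first-order machinery does constrain $\beta$; the Codazzi symmetry has already been consumed inside the proof of Lemma \ref{sec31}, so its output is a formula for a component of $\n B$ purely quadratic in $B$, not an identity between splitting tensors.

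Concretely, in case (iii) formula (\ref{sec20}) collapses (using $\Phi_0=\Phi_{\pi/2}=0$ and $\Arg^N=\{\th\}$) to $\lan(\n_vB)_{ww},\Phi_\th(v)\ran=\k_{\th 0}\,\big|(A^{\Phi_\th(v)}w)_0\big|^2$ for $w\in T_{p,0}M$, while (\ref{sec9}) gives $\lan(\n_vB)_{ww},\Phi_\th(v)\ran=0$ for $w\in T_{p,\th}M$. Tracing over an orthonormal frame and using $\n H\equiv 0$ yields $0=\lan\n_vH,\Phi_\th(v)\ran=\k_{\th 0}\sum_i\big|(A^{\Phi_\th(v)}e_i)_0\big|^2$; since $\k_{\th 0}\neq 0$ and $\Phi_\th$ maps $T_{p,\th}M$ onto $N_{p,\th}M=N_pM$, this forces $\beta=0$ pointwise, hence $B\equiv 0$. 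In the degenerate sub-case of (iv) the same contraction produces the terms $\k_{\th_1\th_2}\big(|B^{\th_2}_{ve_i}|^2+|(A^{\Phi_{\th_1}(v)}e_i)_{\th_2}|^2\big)$, and minimality kills the surviving mixed block and $\beta$ simultaneously. So no second differentiation of (\ref{ang1}), no new identity, and no completeness hypothesis is needed: the theorem is purely local and follows from the stated lemmas, whereas your proposed detour is unnecessary and, as written, leaves two of the four cases unestablished.
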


\noindent \textbf{Remarks:}

\begin{itemize}
\item Let $S^1:=\{(x_1,x_2,x_3)\in \R^3: x_1^2+x_2^2=1,x_3=0\}$ be a circle whose tangent vectors are all orthogonal to the $x_3$-axis, then $S^1$ has CJA and $\Arg^T=\{\pi/2\}$,
$\Arg^N=\{\pi/2,0\}$. It is easy to check that $S^1$ has parallel mean curvature. Hence the condition '$\pi/2\notin \Arg^T$' cannot be dropped in (ii).

\item Let $S:=S^1\times \R$ be a circular cylinder, whose normal vectors are all orthogonal to the $x_3$-axis, then $S$ has CJA and $\Arg^N=\{\pi/2\}$, $\Arg^T=\{\pi/2,0\}$.
Its mean curvature vector field is parallel along $S$. Hence the condition '$\pi/2\notin \Arg^N$' cannot be dropped in (iii).

\item Let $S$ be a nontrivial minimal surface in $\R^3$, then $M:=S\times \R$ is a minimal submanifold in $\R^3\times \R^2=\R^5$. Then  $M$ has
CJA relative to $Q_0:=\R^2$, and $\Arg^N=\Arg^T=\{0,\pi/2\}$. Hence the condition '$\Arg^N\neq \{0,\pi/2\}$' cannot be dropped in (iv).
\end{itemize}

\begin{proof}

(i) Denote $g^T=g^N=\{\th\}$, then Lemma \ref{sec6} and \ref{sec25} tell us
$$\lan B_{vw},\nu\ran=0$$
for any $v,w\in T_{p,\th}M=T_p M$ and $\nu\in N_{p,\th}M=N_p M$. Hence $M$ is totally geodesic.
\bigskip

(ii) As shown in Section \ref{cja}, there exists $\th_0\neq 0,\pi/2$,
such that $\text{Arg}^T=\{\th_0\}$, $\text{Arg}^N=\{0,\th_0\}$.

By Lemma \ref{sec30},
\begin{equation}
\aligned
&\quad\ \k_{\th_0 0}\big(\lan B_{vv}^{0},B_{ww}^0\ran-|B_{vw}^0|^2\big)=\k_{\th_0 0}R_{\th_0 0}(v,w,v,w)\\
=&\sum_{\si\in \text{Arg}^N,\si\neq \th_0}\k_{\th_0\si}R_{\th_0\si}(v,w,v,w)=3\sum_{\si\in \text{Arg}^T,\si\neq \th_0}\k_{\th_0\si}U_{\th_0\si}(v,w,v,w)\\
=&0
\endaligned
\end{equation}
for any $v,w\in T_{p,\th_0}M=T_p M$. In conjunction with $\k_{\th_0 0}=\f{\sin 2\th_0}{\cos 2\th_0-1}\neq 0$, we have
$\lan B_{vv}^0,B_{ww}^0\ran=|B_{vw}^0|^2$. Substituting it into (\ref{sec9}) implies
\begin{equation}\aligned
\lan (\n_v B)_{ww},\Phi_{\th_0}(v)\ran&=(1/3)\k_{\th_0 0}\big(\lan B_{vv}^0,B_{ww}^0\ran+2|B_{vw}^0|^2\big)\\
&=\k_{\th_0 0}|B_{vw}^0|^2.
\endaligned
\end{equation}

Let $\{e_1,\cdots,e_n\}$ be an orthonormal basis of $T_{p,\th_0}M=T_p M$. Since $M$ has parallel mean curvature,
\begin{equation}
0=\sum_{i=1}^n \lan \n_v H,\Phi_{\th_0}(v)\ran=\sum_{i=1}^n \lan (\n_v B)_{e_ie_i},\Phi_{\th_0}(v)\ran=\k_{\th_0 0}\sum_{i=1}^n |B_{v e_i }^0|^2
\end{equation}
which forces $|B_{v e_i}^0|=0$ for any $1\leq i\leq n$. Thus $B_{vw}^0=0$ for any $v,w\in T_p M$. On the other hand, Lemma \ref{sec6} implies
$B_{vw}^{\th_0}=0$. Therefore $B\equiv 0$ on $M$.
\bigskip

(iii) Denote $\text{Arg}^N=\{\th_0\}$, $\text{Arg}^T=\{0,\th_0\}$ with $\th_0\neq 0,\pi/2$. Again applying Lemma \ref{sec30} gives
\begin{equation}
\k_{\th_0 0}U_{\th_0 0}(v,w,v,w)=(1/3)\sum_{\si\in \text{Arg}^N,\si\neq \th_0}\k_{\th_0\si}R_{\th_0\si}(v,w,v,w)=0
\end{equation}
i.e. $U_{\th 0}(v,w,v,w)=0$ for any $v,w\in T_{p,\th_0}M$. This means
\begin{equation}\aligned
0&=\lan (A^{\Phi_{\th_0}(v)}v)_0,(A^{\Phi_{\th_0}(w)}w)_0\ran-\lan (A^{\Phi_{\th_0}(w)}v)_0,(A^{\Phi_{\th_0}(v)}w)_0\ran\\
&=\big|(A^{\Phi_{\th_0}(v)}w)_0\big|^2.
\endaligned
\end{equation}
Since $\Phi_{\th_0}:T_{p,\th_0}M\ra N_{p,\th_0}M=N_p M$ is an isomorphism, $(A^\nu w)_0=0$ holds for every $\nu\in N_p M$. On the other hand,
$(A^\nu w)_{\th_0}=0$ is a direct corollary of Lemma \ref{sec6}. Thus $A^\nu w=0$ for every $w\in T_{p,\th_0}M$.

Let $\{e_1,\cdots,e_{m_{\th_0}}\}$ be an orthonormal basis of $T_{p,\th_0}M$, and $\{e_{m_{\th_0}+1},\cdots,e_n\}$ be an orthonormal basis of
$T_{p,0}M$. For any $v\in T_{p,\th_0}M$, by (\ref{sec9}) and (\ref{sec20}),
\begin{equation}
\lan (\n_v B)_{e_i e_i},\Phi_{\th_0}(v)\ran=\left\{\begin{array}{ll}
0 & \text{if } 1\leq i\leq m_{\th_0},\\
\k_{\th_0 0}\big|(A^{\Phi_{\th_0}(v)}e_i)_0\big|^2 & \text{if } m_{\th_0}+1\leq i\leq n.
\end{array}\right.
\end{equation}
Hence
\begin{equation}\aligned
0&=\lan \n_v H,\Phi_{\th_0}(v)\ran=\sum_{i=1}^n \lan (\n_v B)_{e_i e_i},\Phi_{\th_0}(v)\ran\\
&=\sum_{i=m_{\th_0}+1}^n \k_{\th_0 0}\big|(A^{\Phi_{\th_0}(v)}e_i)_0\big|^2
\endaligned
\end{equation}
and then $(A^\nu e_i)_0=0$ for any $\nu\in N_p M$. On the other hand,
$\lan A^\nu e_i,v\ran=\lan A^\nu v,e_i\ran=0$ holds for any $v\in T_{p,\th_0}M$. Therefore
$A^\nu e_i=0$ for each $m_{\th_0}+1\leq i\leq n$.

In summary, $A^\nu\equiv 0$ for any smooth section $\nu$ of $NM$ and then $M$ has to be affine linear.
\bigskip

(iv) Denote $\text{Arg}^N=\text{Arg}^T=\{\th_1,\th_2\}$. Without loss of generality one can assume $\th_1\in (0,\pi/2)$. Let $\{e_1,\cdots,e_m\}$ be an orthonormal
basis of $T_{p,\th_1}M$ and $\{e_{m+1},\cdots,e_{n}\}$ be an orthonormal basis of $T_{p,\th_2}M$. By Lemma \ref{sec30},
for any $1\leq i,j\leq m$,
\begin{equation*}\aligned
&\lan B_{e_ie_i}^{\th_2},B_{e_je_j}^{\th_2}\ran-|B_{e_ie_j}^{\th_2}|^2=R_{\th_1\th_2}(e_i,e_j,e_i,e_j)\\
=&3\ U_{\th_1\th_2}(e_i,e_j,e_i,e_j)=3\big|(A^{\Phi_{\th_1}(e_j)}e_i)\big|^2
\endaligned
\end{equation*}
i.e.
\begin{equation}
\lan B_{e_ie_i}^{\th_2},B_{e_je_j}^{\th_2}\ran=3\big|(A^{\Phi_{\th_1}(e_j)}e_i)\big|^2+|B_{e_ie_j}^{\th_2}|^2.
\end{equation}
 On the other hand, Lemma \ref{sec6} and Lemma \ref{sec25} tell us
$B_{e_i e_j}^{\th_2}=0$ for every $m+1\leq i,j\leq n$. Since $M$ is a minimal submanifold,
\begin{equation}\aligned
0&=|H^{\th_2}|^2=\big|\sum_{i=1}^n B_{e_ie_i}^{\th_2}\big|^2\\
&=\big|\sum_{i=1}^{m}B_{e_ie_i}^{\th_2}|^2=\sum_{i,j=1}^{m}\lan B_{e_ie_i}^{\th_2},B_{e_je_j}^{\th_2}\ran\\
&=\sum_{i,j=1}^{m}\big(3|(A^{\Phi_{\th_1}(e_j)}e_i)_{\th_2}|^2+|B_{e_ie_j}^{\th_2}|^2\big).
\endaligned
\end{equation}
Hence $(A^{\Phi_{\th_1}(e_j)}e_i)_{\th_2}=B_{e_ie_j}^{\th_2}=0$ for all $1\leq i,j\leq m$. In other words,
$B_{v_1v_2}^{\th_2}=0$ for any $v_1,v_2\in T_{p,\th_1}M$, and $B_{vw}^{\th_1}=0$ for any $v\in T_{p,\th_1}M$
and $w\in T_{p,\th_2}M$, which follows from the Weingarten equations.

If $\th_2\in (0,\pi/2)$, then similarly one can deduce that $B_{w_1w_2}^{\th_1}=0$ for any $w_1,w_2\in T_{p,\th_2}M$
and $B_{vw}^{\th_2}=0$ for any $v\in T_{p,\th_1}M$ and $w\in T_{p,\th_2}M$. In conjunction with $B_{v_1v_2}^{\th_1}=0$
for any $v_1,v_2\in T_{p,\th_1}M$ and $B_{w_1w_2}^{\th_2}=0$ for any $w_1,w_2\in T_{p,\th_2}M$, we have $B\equiv 0$ on $M$
and $M$ has to be totally geodesic.

If $\th_2=0$ or $\pi/2$, then (\ref{sec9}) implies
\begin{equation}
\lan (\n_v B)_{e_ie_i},\Phi_{\th_1}(v)\ran=(1/3)\k_{\th_1\th_2}\left(\lan B_{e_ie_i}^{\th_2},B_{vv}^{\th_2}\ran+2|B_{e_i v}^{\th_2}|^2\right)=0
\end{equation}
for any $v\in T_{p,\th_1}M$ and each $1\leq i\leq m$. Since $U_{\th_1\th_2}(v_1,v_2,v_1,v_2)=0$ for any $v_1,v_2\in T_{p,\th_1}M$, (\ref{sec20}) tells us
\begin{equation}
\lan (\n_v B)_{e_ie_i},\Phi_{\th_1}(v)\ran=\k_{\th_1\th_2}|B_{ve_i}^{\th_2}|^2+\k_{\th_1\th_2}|(A^{\Phi_{\th_1}(v)}e_i)_{\th_2}|^2.
\end{equation}
for each $m+1\leq i\leq n$. Thus
\begin{equation}\aligned
0=\lan \n_v H,\Phi_\th(v)\ran&=\sum_{i=1}^n \lan (\n_v B)_{e_ie_i},\Phi_\th(v)\ran\\
&=\sum_{i=m+1}^n \k_{\th_1\th_2}\left(|B_{ve_i}^{\th_2}|^2+|(A^{\Phi_{\th_1}(v)}e_i)_{\th_2}|^2\right),
\endaligned
\end{equation}
which forces $B_{ve_i}^{\th_2}=(A^{\Phi_{\th_1}(v)}e_i)_{\th_2}=0$ for each $m+1\leq i\leq n$. In other words,
$B_{vw}^{\th_2}=0$ for any $v\in T_{p,\th_1}M$ and $w\in T_{p,\th_2}M$, and $B_{w_1w_2}^{\th_1}=0$ for any $w_1,w_2\in
T_{p,\th_2}M$. Therefore $B\equiv 0$ on $M$ and $M$ has to be affine linear.

\end{proof}

Let $f:D\subset \R^n\ra \R^m$ be a smooth vector-valued function, then for any $p\in M:=\text{graph }f$, any Jordan angle between $N_p M$
and the coordinate $m$-plane takes values in $[0,\pi/2)$ (see \cite{x-y1}). Hence Theorem \ref{tri} implies:

\begin{cor}
Let $D$ be an open domain of $\R^n$ and $f:D\ra \R^m$. If $M=\text{graph }f$ is a minimal submanifold with CJA relative
to the coordinate $m$-plane, and $g^N,g^T\leq 2$, then $M$ has to be an affine $n$-plane.

\end{cor}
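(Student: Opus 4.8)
The plan is to read off the corollary directly from Theorem~\ref{tri}, the only new ingredient being the structural fact---recalled in the sentence preceding the statement---that for a graph $M=\text{graph }f$ every Jordan angle between $N_pM$ and the coordinate $m$-plane $Q_0=\R^m$ lies in $[0,\pi/2)$. In particular $\pi/2\notin\Arg^N$. First I would record this, since it is exactly the hypothesis that rules out the exceptional configurations (such as the circle $S^1$ and the cylinder appearing in the remarks after Theorem~\ref{tri}) for which $B$ need not vanish.

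The second step is to transport this information to the tangent side. By Lemma~\ref{Jordan}, applied with $P=N_pM$ and $Q_0=\R^m$ (so that $P^\bot=T_pM$ and $Q_0^\bot=\R^n$), a value $\th\in(0,\pi/2]$ belongs to $\Arg(T_pM,Q_0^\bot)=\Arg^T$ if and only if it belongs to $\Arg(N_pM,Q_0)=\Arg^N$; since $\pi/2\notin\Arg^N$, this forces $\pi/2\notin\Arg^T$ as well. I would also note that a minimal submanifold satisfies $H\equiv0$ and hence $\n H\equiv0$, so $M$ automatically has parallel mean curvature; this is what allows clauses (ii) and (iii) of Theorem~\ref{tri} to be invoked.

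With these two observations in hand, the remaining step is a four-way case distinction on the pair $(g^N,g^T)$, each possibility under the bound $g^N,g^T\le2$ being matched to a clause of Theorem~\ref{tri}: the case $g^T=g^N=1$ to (i); the case $g^T=1,\ g^N=2$ to (ii), whose side conditions $\pi/2\notin\Arg^T$ and parallel mean curvature now both hold; the case $g^T=2,\ g^N=1$ to (iii), whose side conditions $\pi/2\notin\Arg^N$ and parallel mean curvature hold; and the case $g^T=g^N=2$ to (iv), where minimality is given and the requirement $\Arg^N\neq\{0,\pi/2\}$ is automatic because $\pi/2\notin\Arg^N$. In each case $M$ is affine linear, and since this holds at every $p\in M$ the graph is an affine $n$-plane.

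Since every genuine analytic input is already supplied by Theorem~\ref{tri}, I do not expect a real obstacle; the only point that needs care is verifying that the graph condition simultaneously removes the angle $\pi/2$ from both $\Arg^N$ and $\Arg^T$, which is precisely why the $(0,\pi/2]$-correspondence in Lemma~\ref{Jordan}, rather than merely the nonzero-angle correspondence, is the right tool to cite.
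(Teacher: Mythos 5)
Your proposal is correct and follows essentially the same route as the paper: the paper obtains the corollary in one step from Theorem \ref{tri}, using the fact (cited from \cite{x-y1}) that every Jordan angle between $N_pM$ and the coordinate $m$-plane of a graph lies in $[0,\pi/2)$. Your explicit verifications---transferring $\pi/2\notin\Arg^N$ to $\pi/2\notin\Arg^T$ via Lemma \ref{Jordan}, noting that minimality gives parallel mean curvature, and matching the four admissible pairs $(g^N,g^T)$ to clauses (i)--(iv) of Theorem \ref{tri}---merely spell out what the paper leaves implicit.
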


This is the Theorem 1.1 mentioned in \S 1.5.

\bigskip\bigskip

\Section{Coassociative submanifolds with CJA}{Coassociative submanifolds with CJA}\label{co-cja}

\subsection{Associative subspace of $\Im \O$}
Let $\Bbb{O}$ denote the octonions, which is an $8$-dimensional normed algebra over $\R$ with multiplicative unit $1$. More precisely,
$\Bbb{O}$ is equipped with an inner product $\lan\cdot,\cdot\ran$, whose associated norm $|\cdot|$ satisfies
\begin{equation}\label{norm}
 |xy|=|x||y|
\end{equation}
for any $x,y\in \Bbb{O}$.
Denote by $\Re \O$ the 1-dimensional subspace spanned by $1$, and by $\Im \O$ the orthogonal complement of $\Re\O$.
Then every $x\in \O$ has a unique decomposition
$$x=\Re x+\Im x$$
with $\Re x\in \Re \O, \Im x\in \Im \O$. The conjugation
of $x$ is defined by
\begin{equation}\label{conjugation}
 \bar{x}=\Re x-\Im x.
\end{equation}
For $w\in \O$, let $R_w$ ($L_w$) denote the linear operator of right (left) multiplication by $w$, respectively. With the aid of
(\ref{norm}) and (\ref{conjugation}), one can easily deduce the following fundamental formulas (see  Appendix IV.A of \cite{h-l}):
\begin{equation}\label{o1}
 \lan R_w x,R_w y\ran=\lan x,y\ran |w|^2,\qquad  \lan L_w x,L_w y\ran=\lan x,y\ran |w|^2,
\end{equation}
\begin{equation}\label{o2}
\lan x,R_w y\ran=\lan R_{\bar{w}}x,y\ran,\qquad \lan x,L_w y\ran=\lan L_{\bar{w}}x,y\ran,
\end{equation}
\begin{equation}\label{o3}
\bar{\bar{x}}=x,\ \ol{xy}=\bar{y}\bar{x},\qquad x\bar{x}=|x|^2,\ \lan x,y\ran=\Re x\bar{y}.
\end{equation}

%It is well-known that the octonions $\O$ can be obtained from the quaternions $\H$ via the \textit{Caley-Dickson process}. More precisely,
%$\O=\H\oplus \H e$ with $e$ a unit element (i.e. $|e|=1$) that is orthogonal to $\H$, and for any $a,b,c,d\in \H$,
%\begin{equation}
% (a+be)(c+de)=(ac-\bar{d}b)+(da+b\bar{c})e.
%\end{equation}

Let $P$ be a $3$-dimensional real subspace of $\Im\O$, if $A:=\Re \O\oplus P$ is a quarternion subalgebra
of $\O$ (i.e. $A$ is isomorphic to $\H$), then $P$ is said to be \textit{associative}.

\begin{lem}\label{o4}
Let $P$ be an associative subspace of $\Im \O$ and $x,y$ be unit elements in $P$ that are orthogonal to each other,
then $\{x,y,z:=xy\}$ is an orthonormal basis of $P$, and
\begin{equation}
xy=-yx=z,\quad yz=-zy=x,\quad zx=-xz=y.
\end{equation}
Conversely, if $\{x,y,z\}$ is an orthonormal basis of an associative subspace $P$, then $z=xy$ or $-xy$.
\end{lem}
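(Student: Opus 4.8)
The plan is to reduce everything to the three groups of octonion identities (\ref{o1})--(\ref{o3}) together with the single structural hypothesis that $A:=\Re\O\oplus P$ is associative (being isomorphic to $\H$), plus the elementary fact that a unit imaginary octonion $u$ satisfies $\bar u=-u$ and $u^2=-u\bar u=-|u|^2=-1$. I would set $z:=xy$ and organize the argument in three stages: (a) $z\in P$ and $\{x,y,z\}$ is orthonormal; (b) the product relations; (c) the converse.

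For (a), since $x,y\in P\subset A$ and $A$ is a subalgebra, $z=xy\in A$. To see $z\in\Im\O$ I would compute, using (\ref{o2}) and $\bar x=-x$, that $\lan xy,1\ran=\lan y,\bar x\ran=-\lan y,x\ran=0$, so $\Re z=0$ by (\ref{o3}) and hence $z\in\Im\O\cap A=P$. Norm multiplicativity (\ref{norm}) gives $|z|=|x||y|=1$. Orthogonality follows again from (\ref{o2}): $\lan xy,x\ran=\lan y,\bar x x\ran=|x|^2\lan y,1\ran=0$ and $\lan xy,y\ran=\lan x,y\bar y\ran=|y|^2\lan x,1\ran=0$, where $\bar x x=|x|^2$ and $y\bar y=|y|^2$ are real. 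Since $\dim P=3$, the orthonormal triple $\{x,y,z\}$ is automatically a basis.

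For (b), anticommutativity comes straight from conjugation (\ref{o3}): $yx=(-y)(-x)=\bar y\bar x=\overline{xy}=\bar z=-z$, so $yx=-xy$. The cyclic relations are where associativity of $A$ is indispensable, and I would derive them as $yz=y(xy)=(yx)y=-(xy)y=-x(yy)=-xy^2=x$ and $zx=(xy)x=x(yx)=-x(xy)=-(xx)y=-x^2y=y$, each step using associativity in $A$ together with $x^2=y^2=-1$. The remaining signs $zy=-x$ and $xz=-y$ then follow by conjugating these identities, exactly as in the first computation.

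For the converse I would apply the forward direction to the orthonormal pair $x,y$: the product $xy$ is then a unit vector of $P$ orthogonal to both $x$ and $y$, so it lies on the line orthogonal to $\text{span}\{x,y\}$ inside the $3$-plane $P$, which is precisely $\R z$; hence $xy=\pm z$, i.e. $z=xy$ or $z=-xy$. The only real obstacle here is conceptual rather than computational: every cyclic identity in step (b) rests on associativity, which is exactly the content of ``$P$ associative'', so the proof must invoke it explicitly, and the relations would genuinely fail for a generic orthonormal pair in $\Im\O$.
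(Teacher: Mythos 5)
Your proof is correct, and it shares the paper's overall architecture --- show $z:=xy$ lies in $P$ and completes an orthonormal triple, derive the multiplication table, and obtain the converse from the fact that the orthogonal complement of $\text{span}\{x,y\}$ inside the $3$-plane $P$ is a line --- but it diverges at the lemma's one genuinely delicate step, the anticommutativity $yx=-xy$. You get it directly from the anti-automorphism identity $\ol{xy}=\bar{y}\bar{x}$ in (\ref{o3}) together with $\bar{u}=-u$ for imaginary $u$: since you have already established $z\in \Im\O$, you may write $yx=\bar{y}\bar{x}=\ol{xy}=\bar{z}=-z$. The paper instead argues by contradiction: it first observes $yx=\pm z$ and rules out $yx=z$ by computing $(x+y)(x-y)=x^2-y^2+yx-xy=0$, which contradicts the absence of zero divisors forced by norm multiplicativity (\ref{norm}). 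Your route is shorter and stays entirely within the listed conjugation identities; the paper's trades the identity $\ol{xy}=\bar{y}\bar{x}$ for the no-zero-divisor property of a normed algebra. The remaining differences are cosmetic: you check $\lan xy,x\ran=\lan xy,y\ran=0$ via the adjoint identities (\ref{o2}) where the paper uses the isometry identities (\ref{o1}); and for the cyclic relations you expand $y(xy)$ and $(xy)x$ by associativity of $A$ (which you correctly flag as the essential hypothesis --- these reassociations would otherwise need alternativity of $\O$), whereas the paper substitutes $z=-yx$ and simplifies $y(-yx)=-y^2x$, using the same reassociation silently. Your final signs $zy=-x$, $xz=-y$ by conjugating the cyclic identities are also fine, and agree with the paper's ``similarly.''
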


\begin{proof}
 Since $\Re \O\oplus P$ is a subalgebra of $\O$, $xy\in \Re\O\oplus P$. By (\ref{conjugation}) and (\ref{o3}),
$$\Re(xy)=-\Re(x\bar{y})=-\lan x,y\ran=0,$$
i.e. $xy\in P$. Applying (\ref{o1}) and (\ref{norm}) gives
$$\aligned
\lan xy,x\ran&=\lan L_x y,L_x 1\ran=\lan y,1\ran|x|^2=0,\\
\lan xy,y\ran&=\lan R_y x,R_y 1\ran=\lan x,1\ran|y|^2=0,\\
|xy|&=|x||y|=1.
\endaligned$$
Hence $\{x,y,z:=xy\}$ is an orthonormal basis of $P$.

Similarly, one can show $yx$ is also a unit element in $P$ orthogonal to $\text{span}\{x,y\}$,
hence $yx=z$ or $-z$. If $yx=z$, then
\begin{equation}\label{o6}\aligned
&(x+y)(x-y)=x^2-y^2+yx-xy\\
=&-x\bar{x}+y\bar{y}+z-z=-|x|^2+|y|^2\\
=&0.
\endaligned
\end{equation}
On the other hand, since $x$ and $y$ are linearly independent, $x+y,x-y\neq 0$ and it follows from (\ref{norm}) that
$|(x+y)(x-y)|=|x+y||x-y|\neq 0$, which contradicts  (\ref{o6}). Hence $yx=-z$ and it follows that
$$\aligned
yz=&y(-yx)=-y^2 x\\
=&y\bar{y}x=|y|^2 x=x.
\endaligned$$
Similarly one can prove $zy=-x$ and $zx=-xz=y$.

Conversely, if $\{x,y,z\}$ is an orthonormal basis of $P$, then $z$ and $xy$ are both unit elements orthogonal to $\text{span}\{x,y\}$,
which implies $z=xy$ or $-xy$.

\end{proof}

\begin{lem}\label{mult}
 Let $A$ be a quarternion subalgebra of $\O$, $\ep\in A^\bot$ with $|\ep|=1$, then $A\ep\bot A$, $\O=A\oplus A\ep$
and
\begin{equation}
 (x+y\ep)(v+w\ep)=(xv-\bar{w}y)+(wx+y\bar{v})\ep
\end{equation}
for any $x,y,v,w\in A$.

\end{lem}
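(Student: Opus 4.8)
The plan is to read this as the Cayley--Dickson description of $\O$ relative to the quaternion subalgebra $A$, and to reduce everything to the structural identities (\ref{o1})--(\ref{o3}), (\ref{norm}) together with the alternative (Moufang) laws available in $\O$ (cf. Appendix IV.A of \cite{h-l}). First I would record elementary facts about $\ep$. Since $1\in A$ and $\ep\perp A$ we have $\lan\ep,1\ran=0$, so $\ep\in\Im\O$, whence $\bar\ep=-\ep$ and $\ep^2=-\ep\bar\ep=-|\ep|^2=-1$. More importantly, for imaginary $u,v$ one gets $uv+vu=-2\lan u,v\ran$ directly from (\ref{o3}); applied to $\ep$ and the imaginary part of $q\in A$, together with $q\cdot 1=1\cdot q$, this yields the transfer rule $q\ep=\ep\bar q$ for every $q\in A$, which I will use repeatedly. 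The orthogonality $A\ep\perp A$ is then immediate from (\ref{o2}) and the closure of $A$ under multiplication and conjugation: for $x,y\in A$, $\lan x\ep,y\ran=\lan L_x\ep,y\ran=\lan\ep,\bar x y\ran=0$ since $\bar x y\in A$ and $\ep\perp A$. Because $R_\ep$ is norm preserving by (\ref{norm}), $\dim A\ep=\dim A=4$, and the orthogonal sum $A\oplus A\ep$ has dimension $8=\dim\O$, giving $\O=A\oplus A\ep$.

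For the product formula I would expand $(x+y\ep)(v+w\ep)=xv+x(w\ep)+(y\ep)v+(y\ep)(w\ep)$ by distributivity and verify the three Cayley--Dickson rules $x(w\ep)=(wx)\ep$, $(y\ep)v=(y\bar v)\ep$, and $(y\ep)(w\ep)=-\bar w y$. The middle rule is the cleanest: its $A$-component against $a\in A$ vanishes exactly as above, while its $A\ep$-component is handled by polarizing the norm identity (\ref{norm}) and using the right-alternative law $(y\ep)\ep=y\ep^2=-y$; a short quaternionic trace computation inside the associative algebra $A$ then matches the outcome with $(y\bar v)\ep$. For the third rule I would apply the transfer rule to write $(y\ep)(w\ep)=(\ep\bar y)(w\ep)$, invoke the middle Moufang identity $(\ep a)(b\ep)=\ep(ab)\ep$ with $a=\bar y$, $b=w$, apply the transfer rule once more, and collapse the resulting $\ep(\ep\,\cdot\,)$ by left-alternativity $\ep(\ep c)=\ep^2 c=-c$. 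The first rule, $x(w\ep)=(wx)\ep$, is equivalent to the associator identity $[x,w,\ep]=(xw-wx)\ep$; I would prove this by repeatedly expanding with the transfer rule and then using that the associator $[a,b,c]=(ab)c-a(bc)$ is totally antisymmetric in $\O$ (alternativity), together with $[1,\ep,\bar w]=0$, to reduce $[x,\ep,\bar w]$ and $[\ep,\bar x,\bar w]$ to a common term.

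The main obstacle is precisely the non-associativity of $\O$: none of the three rules can be obtained by naive reassociation, and each genuinely requires one of the alternative/Moufang laws (right-alternative for the second rule, middle Moufang plus left-alternative for the third, the alternating associator for the first). The care needed is to keep every bracketing honest and to feed each reassociation through an identity that is actually valid in an alternative algebra. Once the transfer rule $q\ep=\ep\bar q$ and the alternating associator are in hand, both standard for $\O$ and available from the cited appendix, the three computations are routine and assemble directly into the stated formula.
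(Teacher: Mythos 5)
Your proof is correct, but it takes a genuinely different route from the paper: the paper's entire proof of this lemma is a one-line citation of Lemma A.8 in \cite{h-l}, whereas you reconstruct that lemma from scratch. In effect you re-derive the Cayley--Dickson structure of $\O$ relative to $A$: the transfer rule $q\ep=\ep\bar q$ (which your polarization argument for imaginary elements does establish, after splitting $q$ into real and imaginary parts), the orthogonality $A\ep\bot A$ via (\ref{o2}) and closure of $A$ under conjugation, the dimension count via (\ref{o1}), and then the three product rules $x(w\ep)=(wx)\ep$, $(y\ep)v=(y\bar v)\ep$, $(y\ep)(w\ep)=-\bar w y$. I checked the sketched steps and they go through: for the second rule, the polarized norm identity $\lan ab,cd\ran+\lan cb,ad\ran=2\lan a,c\ran\lan b,d\ran$ together with $(y\ep)\ep=-y$ gives $\lan (y\ep)v,u\ep\ran=\lan uv,y\ran=\lan u,y\bar v\ran$ for all $u\in A$, which pins down $(y\ep)v=(y\bar v)\ep$ once you know $(y\ep)v\in A\ep$; for the first rule, expanding $x(w\ep)=x(\ep\bar w)$ with the transfer rule reduces the claim to $[x,\ep,\bar w]=[\ep,\bar x,\bar w]$, which follows from the alternating associator after discarding the real part of $x$, exactly as you indicate; the third rule is the cleanest via the middle Moufang identity. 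The trade-off is clear: the paper's citation is economical and appropriate, since Lemma A.8 of \cite{h-l} is precisely this statement, while your argument is self-contained and exposes the actual mechanism (note that Harvey--Lawson's own proof of A.8 runs uniformly on the polarized norm identity, so your use of Moufang and alternativity for the first and third rules is a genuine variant, at the cost of importing those laws as known facts about $\O$). Either way the lemma stands; your version would be the right one to include if the paper were meant to be independent of the Harvey--Lawson appendix.
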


\begin{proof}
 The lemma immediately follows from Lemma A.8 in \cite{h-l}.
\end{proof}

\subsection{Jordan angles between associative subspaces}
Now we explore the Jordan angles between an associative subspace $P$ and $\Im \H$.

\textbf{Case I.} $0\in \text{Arg}(P,\Im\H)$ and $m_0\geq 2$. This means there exist $2$ unit elements $a,b\in P\cap \Im\H$
that are orthogonal to each other, then it follows from Lemma \ref{o4} that $\{a,b,ab\}$ is an orthonormal basis
of $P\cap \Im \H$. Hence $P=\Im \H$ and $\text{Arg}(P,\Im\H)=\{0\}$.

\textbf{Case II.} $\pi/2\in \text{Arg}(P,\Im\H)$ and $m_{\pi/2}\geq 2$. Then there exists $2$ unit elements $ae,be\in P\cap (\Im \H)^\bot=
P\cap \H e$ that are orthogonal to each other. By Lemma \ref{o4}, $(ae)(be)=-\bar{b}a$ is a unit vector in
$P$, and $-\bar{b}a\in \H\cap \Im\O=\Im \H$. Hence $\text{Arg}(P,\Im \H)=\{0,\pi/2\}$,
$m_0=1$, $m_{\pi/2}=2$, and $P$ is spanned by $ae,be$ and $-\bar{b}a$, which are the angle directions
of $P$ relative to $\Im \H$.

\textbf{Case III.} $m_0\leq 1$ and $m_{\pi/2}\leq 1$. (Note that $m_0=0$ ($m_{\pi/2}=0$) means $0\notin \text{Arg}(P,\Im\H)$ (
$\pi/2\notin  \text{Arg}(P,\Im\H)$), respectively.) Firstly, we claim $m_0+m_{\pi/2}\leq 1$. If not, there exist
unit elements $a\in P\cap \Im\H$ and $be\in P\cap \H e$; by Lemma \ref{o4}, $P$ is spanned by $a, be$ and
$a(be)=(ba)e\in \H e$; hence $m_{\pi/2}=2$, contradicting $m_{\pi/2}\leq 1$.

Hence there exist mutually orthogonal elements $x_1,x_2\in P$ that are unit angle directions of $P$ relative
to $\Im \H$ associated to $\th_1,\th_2\in \text{Arg}(P,\Im \H)\cap (0,\pi/2)$, respectively.
%Note that probably\commentred{What do you mean by ``probably''?} $\th_1=\th_2$.
More precisely,
\begin{equation}
 (\mc{P}\circ \mc{P}_0)x_\a=\cos^2\th_\a x_\a \qquad \forall \a=1,2.
\end{equation}
Here $\mc{P}_0$ denotes the orthogonal projection of $\Im\O$ onto $\Im\H$ and $\mc{P}$ denotes
the orthogonal projection of $\Im\O$ onto $P$. As in Section \ref{cja}, we denote by $\mc{P}_0^\bot$
the orthogonal projection of $\Im\O$ onto $\H e=(\Im \H)^\bot$, then
\begin{equation}
 \aligned
x_\a&=\mc{P}_0 x_\a+\mc{P}_0^\bot x_\a\\
&=\cos\th_\a a_\a+\sin\th_\a y_\a
\endaligned
\end{equation}
with $a_\a:=\sec\th_\a\ \mc{P}_0x_\a\in \Im\H$ and $y_\a:=\csc\th_\a\ \mc{P}^\bot x_\a\in \H e$,
satisfying $|a_\a|=|y_\a|=1$ for each $\a=1,2$. Let $\ep$ be the unique element in $\O$ satisfying
$y_1=a_1\ep$, then for every $c\in \H$,
$$\aligned
\lan \ep,c\ran&=\lan L_{a_1}\ep,L_{a_1}c\ran=\lan a_1\ep,a_1 c\ran\\
&=\lan y_1,a_1 c\ran=0,
\endaligned$$
which implies $\ep\in \H e$. And $|\ep|=1$ directly follows from $y_1=a_1\ep$ and $|y_1|=|a_1|=1$.
Similarly, one can prove that there exists a unique $b\in \H$ which satisfies $y_2=b\ep$, and moreover
$|b|=1$.

Let $x_3:=x_1x_2$, then Lemma \ref{mult} enables us to obtain
\begin{equation}\label{o7}\aligned
x_3=&(\cos\th_1 a_1+\sin\th_1 a_1\ep)(\cos\th_2 a_2+\sin\th_2 b\ep)\\
=&(\cos\th_1\cos\th_2 a_1a_2-\sin\th_1\sin\th_2 \bar{b}a_1)\\
&+(\cos\th_1\sin\th_2 ba_1+\sin\th_1\cos\th_2 a_1\bar{a}_2)\ep.
\endaligned
\end{equation}

By Lemma \ref{o4}, $\{x_1,x_2,x_3\}$ is an orthonormal basis of $P$, thus for each $\a=1,2$,
\begin{equation}\label{o5}\aligned
0&=\cos^2 \th_\a\lan x_\a,x_3\ran=\lan (\mc{P}\circ \mc{P}_0)x_\a,x_3\ran\\
&=\lan \mc{P}_0 x_\a,x_3\ran=\lan \mc{P}_0 x_\a,\mc{P}_0 x_3\ran.
\endaligned
\end{equation}
When $\a=1$, the above equation gives
$$\aligned
0&=\lan \mc{P}_0 x_1,\mc{P}_0 x_3\ran=\lan \cos\th_1 a_1,\cos\th_1 \cos\th_2 a_1 a_2-\sin\th_1\sin\th_2 \bar{b}a_1\ran\\
&=\cos^2\th_1 \cos\th_2\lan a_1,a_1 a_2\ran-\cos\th_1\sin\th_1\sin\th_2\lan a_1,\bar{b}a_1\ran\\
&=\cos^2\th_1\cos\th_2\lan 1,a_2\ran-\cos\th_1\sin\th_1\sin\th_2\lan 1,\bar{b}\ran\\
&=-\cos\th_1\sin\th_1\sin\th_2\lan \bar{b},1\ran.
\endaligned$$
In conjunction with $\th_1,\th_2\in (0,\pi/2)$ we have $\lan\bar{b},1\ran=0$, therefore $b\in \Im\H$. Letting $\a=2$ in (\ref{o5})
yields
$$\aligned
0&=\lan \mc{P}_0 x_2,\mc{P}_0 x_3\ran=\lan \cos\th_2 a_2,\cos\th_1\cos\th_2 a_1a_2-\sin\th_1\sin\th_2 \bar{b}a_1\ran\\
&=\cos\th_1\cos^2\th_2\lan a_2,a_1a_2\ran-\cos\th_2\sin\th_1\sin\th_2\lan a_2,\bar{b}a_1\ran\\
&=-\cos\th_2\sin\th_1\sin\th_2\lan a_2,\bar{b}a_1\ran
\endaligned$$
and moreover
$$\aligned
0&=\lan a_2,\bar{b}a_1\ran=\lan a_2,R_{a_1}\bar{b}\ran=\lan R_{\bar{a}_1}a_2,\bar{b}\ran\\
&=\lan a_2\bar{a}_1,\bar{b}\ran=\lan -a_2a_1,-b\ran=\lan a_2a_1,b\ran.
\endaligned$$
Observing that $a_1,a_2$ and $a_2a_1$ form an orthonormal basis of $\Im\H$, we have $b\in \text{span}\{a_1,a_2\}$.

By the definition of angle directions,
\begin{equation}\aligned
&\lan \mc{P}_0^\bot x_1,\mc{P}_0^\bot x_2\ran=\lan \mc{P}_0^\bot x_1,x_2\ran=\lan (\mc{P}\circ \mc{P}_0^\bot)x_1,x_2\ran\\
=&\lan \mc{P}(x_1-\mc{P}_0 x_1),x_2\ran=\lan x_1,x_2\ran-\lan (\mc{P}\circ \mc{P}_0)x_1,x_2\ran\\
=&\lan x_1,x_2\ran-\cos^2\th_1\lan x_1,x_2\ran=0,
\endaligned
\end{equation}
which implies
$$\aligned
0&=\lan \sin\th_1 y_1,\sin\th_2 y_2\ran=\sin\th_1\sin\th_2\lan a_1\ep,b\ep\ran\\
&=\sin\th_1\sin\th_2\lan a_1,b\ran
\endaligned$$
i.e. $\lan a_1,b\ran=0$. Therefore $b=a_2$ or $-a_2$.

If $b=a_2$, then (\ref{o7}) shows
\begin{equation}\aligned
x_3&=(\cos\th_1\cos\th_2 a_1a_2-\sin\th_1\sin\th_2 \bar{a}_2 a_1)+(\cos\th_1\sin\th_2a_2a_1+\sin\th_1\cos\th_2a_1\bar{a}_2)\ep\\
&=(\cos\th_1\cos\th_2-\sin\th_1\sin\th_2)a_3-(\cos\th_1\sin\th_2+\sin\th_1\cos\th_2)a_3\ep\\
&=\cos(\th_1+\th_2)a_3-\sin(\th_1+\th_2)a_3\ep.
\endaligned
\end{equation}
Noting that $x_3$ is also an angle direction of $P$ relative to $\Im\H$, $\th_3:=\arccos |\cos(\th_1+\th_2)|\in \text{Arg}(P,\Im\H)$.
In other words,
\begin{equation*}
\th_3=\left\{\begin{array}{cc}
\th_1+\th_2 & \text{if }\th_1+\th_2\leq \pi/2,\\
\pi-(\th_1+\th_2) & \text{if }\th_1+\th_2>\pi/2.
\end{array}\right.
\end{equation*}

Otherwise, $b=-a_2$ and (\ref{o7}) gives
\begin{equation}\aligned
x_3&=(\cos\th_1\cos\th_2 a_1a_2-\sin\th_1\sin\th_2 (-\bar{a}_2) a_1)+(\cos\th_1\sin\th_2(-a_2a_1)+\sin\th_1\cos\th_2a_1\bar{a}_2)\ep\\
&=(\cos\th_1\cos\th_2+\sin\th_1\sin\th_2)a_3-(-\cos\th_1\sin\th_2+\sin\th_1\cos\th_2)a_3\ep\\
&=\cos(\th_1-\th_2)a_3-\sin(\th_1-\th_2)a_3\ep,
\endaligned
\end{equation}
which implies $\th_3:=\arccos|\cos(\th_1-\th_2)|=|\th_1-\th_2|\in \text{Arg}(P,\Im\H)$. Without loss of generality, one can assume
$\th_1\geq \th_2$, then $\th_3=\th_1-\th_2$. Now we put
$$\aligned
&\th'_1:=\th_2,\quad\th'_2:=\th_3,\quad\th'_3:=\th_1,\\
&a'_1:=a_2,\quad a'_2:=a_3,\quad a'_3:=a_1,\\
&x'_1:=x_2,\quad x'_2:=x_3,\quad x'_3:=x_1
\endaligned$$
and $\ep':=-\ep$, then
\begin{equation}
\aligned
x'_1&=\cos\th'_1 a'_1+\sin\th'_1 a'_1\ep',\\
x'_2&=\cos\th'_2 a'_2+\sin\th'_2 a'_2\ep',\\
x'_3&=\cos\th'_3 a'_3-\sin\th'_3 a'_3\ep',
\endaligned
\end{equation}
which satisfy $\th'_3=\th'_1+\th'_2$, $a'_3=a'_1a'_2$ and $x'_3=x'_1x'_2$.

Altogether, we have shown
\bigskip
\begin{pro}\label{o8}
Let $P$ be an associative subspace of $\Im\O$, and $0\leq \th_1\leq \th_2\leq \th_3\leq \pi/2$ be the Jordan angles between $P$ and $\Im\H$, then
\begin{equation}
\th_3=\left\{\begin{array}{cc}
\th_1+\th_2 & \text{if }\th_1+\th_2\leq \pi/2,\\
\pi-(\th_1+\th_2) & \text{if }\th_1+\th_2>\pi/2.
\end{array}\right.
\end{equation}
Moreover, there exist an orthonormal basis $\{a_1,a_2,a_3\}$ of $\Im\H$ satisfying $a_3=a_1a_2$, and a unit element $\ep\in \H e$, such that
\begin{equation}\aligned
x_1:=&\cos\th_1 a_1+\sin\th_1 a_1\ep,\\
x_2:=&\cos\th_2 a_2+\sin\th_2 a_2\ep,\\
x_3:=&\cos(\th_1+\th_2)a_3-\sin(\th_1+\th_2) a_3\ep
\endaligned
\end{equation}
are unit angle directions of $P$ relative to $\Im \H$, and $x_3=x_1x_2$.
\end{pro}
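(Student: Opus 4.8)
The plan is to classify the relative position of $P$ with respect to $\Im\H$ according to the multiplicities $m_0$ and $m_{\pi/2}$ of the extreme Jordan angles $0$ and $\pi/2$, so that everything reduces to three cases, with only the generic one carrying content. The essential structural input is Lemma \ref{o4}: inside the associative subspace $P$, any orthonormal pair $x_1,x_2$ completes to an orthonormal basis $\{x_1,x_2,x_1x_2\}$ obeying quaternionic multiplication. Consequently the product of two angle directions is again a member of an orthonormal basis of $P$, hence itself an angle direction; expanding this product via the octonion multiplication formula of Lemma \ref{mult} will let me read off the third Jordan angle directly.

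First I would dispose of the two degenerate configurations. If $m_0\geq 2$, two orthonormal vectors of $P$ lie in $\Im\H$, so by Lemma \ref{o4} their product lies in $P\cap\Im\H$ as well and $P=\Im\H$, giving $\th_1=\th_2=\th_3=0$. If $m_{\pi/2}\geq 2$, two orthonormal vectors $ae,be\in P\cap\H e$ have product $(ae)(be)=-\bar{b}a\in\Im\H$, forcing $\Arg(P,\Im\H)=\{0,\pi/2\}$ with $(\th_1,\th_2,\th_3)=(0,\pi/2,\pi/2)$; in both configurations the stated angle relation holds trivially. Otherwise $m_0,m_{\pi/2}\leq 1$, and a short argument (were both equal to $1$, the product of the two corresponding directions would force $m_{\pi/2}=2$) gives $m_0+m_{\pi/2}\leq 1$, so there are precisely two angles $\th_1,\th_2\in(0,\pi/2)$ with orthonormal angle directions $x_1,x_2$.

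The substantive step is this generic case. I would write $x_\a=\cos\th_\a\,a_\a+\sin\th_\a\,y_\a$ with $a_\a=\sec\th_\a\,\mc{P}_0x_\a\in\Im\H$ and $y_\a=\csc\th_\a\,\mc{P}_0^\bot x_\a\in\H e$ both unit, and introduce the unique $\ep$ with $y_1=a_1\ep$; using (\ref{o1}) one checks $\ep\in\H e$ and $|\ep|=1$, and likewise $y_2=b\ep$ with $b\in\H$, $|b|=1$. Then Lemma \ref{mult} expands $x_3:=x_1x_2$ into its $\Im\H$-part and $\H e$-part as in (\ref{o7}). The crux is pinning down $b$: the orthogonality relations $\lan\mc{P}_0x_\a,\mc{P}_0x_3\ran=0$ (valid since $x_3\bot x_\a$ and each $x_\a$ is an angle direction) together with $\lan\mc{P}_0^\bot x_1,\mc{P}_0^\bot x_2\ran=0$ force in turn $b\in\Im\H$, then $b\in\text{span}\{a_1,a_2\}$, then $b\bot a_1$, whence $b=\pm a_2$.

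Finally I would read off $\th_3$ for each sign. Setting $a_3:=a_1a_2$, the choice $b=a_2$ collapses (\ref{o7}) to $x_3=\cos(\th_1+\th_2)a_3-\sin(\th_1+\th_2)a_3\ep$, so $\th_3=\arccos|\cos(\th_1+\th_2)|$ equals $\th_1+\th_2$ or $\pi-(\th_1+\th_2)$ exactly as claimed, and $\{a_1,a_2,a_3\}$, $\ep$, and $x_1,x_2,x_3$ take the stated normalized form. The choice $b=-a_2$ instead gives $\th_3=|\th_1-\th_2|$; relabelling the indices so that the new third angle is the largest and replacing $\ep$ by $-\ep$ returns this subcase to the normalized form with $\th_3'=\th_1'+\th_2'$ and $x_3'=x_1'x_2'$. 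I expect the main obstacle to be precisely the determination $b=\pm a_2$: it chains several eigenvalue and orthogonality identities through the multiplication rules (\ref{o1})--(\ref{o3}), and the delicate part is the bookkeeping of which inner products vanish.
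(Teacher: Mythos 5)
Your proposal is correct and follows essentially the same route as the paper's proof: the same trichotomy by the multiplicities $m_0$ and $m_{\pi/2}$ with the same reduction to $m_0+m_{\pi/2}\leq 1$, the same normalization $y_1=a_1\ep$, $y_2=b\ep$ and expansion of $x_3:=x_1x_2$ via Lemma \ref{mult}, the same chain of orthogonality identities $\lan \mc{P}_0 x_\a,\mc{P}_0 x_3\ran=0$ and $\lan \mc{P}_0^\bot x_1,\mc{P}_0^\bot x_2\ran=0$ pinning down $b=\pm a_2$, and the identical relabelling together with $\ep\mapsto-\ep$ in the $b=-a_2$ subcase. Even your brief treatment of the degenerate cases matches the paper's level of detail, so there is nothing substantive to add.
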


\subsection{On the second fundamental form of coassociative submanifolds}
Let $M$ be a 4-dimensional submanifold in $\Im\O$. If the normal space at every point of $M$ is associative,
then we call $M$ a \textit{coassociative submanifold} (see \cite{h-l}).  Let $p$ be an arbitrary point of $M$, denote by
$0\leq \th_1\leq \th_2\leq \th_3\leq \pi/2$ the Jordan angles between $N_{p}M$ (an associative subspace) and $\Im\H$, then
by Proposition \ref{o8},
\begin{equation}\label{o9}
 \th_3=\left\{\begin{array}{cc}
\th_1+\th_2 & \text{if }\th_1+\th_2\leq \pi/2,\\
\pi-(\th_1+\th_2) & \text{if }\th_1+\th_2>\pi/2.
\end{array}\right.
\end{equation}
Denote $\{a_1,a_2,a_3\}$ to be the orthonormal basis of $\Im\H$ satisfying $a_3=a_1a_2$ and $\ep$
to be the unit element in $\H e$, such that
\begin{equation}
 \aligned
\nu_1:=&\cos\th_1 a_1+\sin\th_1 a_1\ep,\\
\nu_2:=&\cos\th_2 a_2+\sin\th_2 a_2\ep,\\
\nu_3:=&\cos(\th_1+\th_2)a_3-\sin(\th_1+\th_2) a_3\ep
\endaligned
\end{equation}
are all unit angle directions of $N_{p}M$ relative to $\Im\H$, and $\nu_3=\nu_1\nu_2$. Denote
\begin{equation}
 \aligned
&e_1:=-\nu_1\ep=\sin\th_1 a_1-\cos\th_1 a_1\ep,\\
&e_2:=-\nu_2\ep=\sin\th_2 a_2-\cos\th_2 a_2\ep,\\
&e_3:=-\nu_3\ep=-\sin(\th_1+\th_2)a_3-\cos(\th_1+\th_2)a_3\ep,\\
&e_4:=\ep,
\endaligned
\end{equation}
then it is easy to check that $\lan e_i,\nu_\a\ran=0$ and $\lan e_i,e_j\ran=\de_{ij}$ for each
$1\leq i,j\leq 4$ and $1\leq \a\leq 3$. Hence $\{e_1,e_2,e_3,e_4\}$ is an orthonormal basis of $T_{p}M$.
Whenever $\th_\a\in (0,\pi/2)$, let $\Phi_{p,\th_\a}$ denote the isometric automorphism of $R_{p,\th_\a}M:=N_{p,\th_\a}M\oplus T_{p,\th_\a}M$
 as in \S \ref{1.2}, then it follows from (\ref{Phi}) that
$$\sec\th_1 \mc{P}_0^\bot e_1=\cos\th_1 e_1-\sin \th_1 \Phi_{p,\th_1}(e_1).$$
Hence
$$\aligned
\Phi_{p,\th_1}(e_1)&=\cot \th_1 e_1-\sec\th_1\csc\th_1 \mc{P}_0^\bot e_1\\
&=\cot\th_1(\sin\th_1 a_1-\cos\th_1 a_1\ep)-\sec\th_1\csc\th_1(-\cos\th_1 a_1\ep)\\
&=\cos\th_1 a_1+\sin\th_1 a_1\ep\\
&=\nu_1
\endaligned$$
and similarly $\Phi_{p,\th_2}(e_2)=\nu_2$; in conjunction with (\ref{o9}),
$$\aligned
\Phi_{p,\th_3}(e_3)&=\cot\th_3 e_3-\sec\th_3\csc\th_3 \mc{P}_0^\bot e_3\\
%&=-\tan\th_3(-\cos(\th_1+\th_2)a_3\ep)+\cot\th_3(-\sin(\th_1+\th_2))a_3\\
&=-\cos\th_3 a_3+\text{sgn}\big(\cos(\th_1+\th_2)\big)\sin\th_3 a_3\ep\\
&=\left\{\begin{array}{cc}
          -\nu_3 & \text{if }\th_1+\th_2<\pi/2,\\
            \nu_3 & \text{if }\th_1+\th_2>\pi/2.
         \end{array}
\right.
\endaligned$$
In summary we get a proposition as follows.
\bigskip

\begin{pro}\label{coass}
 Let $M$ be a coassociative submanifold in $\Im\O$, $p\in M$ and $0\leq \th_1\leq \th_2\leq \th_3\leq \pi/2$
be the Jordan angles between $N_{p}M$ and $\Im \H$, then there exist an orthonormal basis $\{\nu_1,\nu_2,\nu_3\}$
of $N_{p}M$ and an orthonormal basis $\{e_1,e_2,e_3,e_4\}$ of $T_{p}M$, such that

(i) For each $1\leq \a\leq 3$, $\nu_\a$ ($e_\a$) is an angle direction of $N_{p}M$ ($T_{p}M$) relative
to $\Im \H$ (or $\H e$), corresponding to the Jordan angle $\th_\a$;

(ii) $e_4\in \H e$;

(iii) $e_\a=-\nu_\a e_4$ for each $1\leq \a\leq 3$;

(iv) $\Phi_{p,\th_\a}(e_\a)=\nu_\a$ for any $1\leq \a\leq 2$ satisfying $\th_\a\in (0,\pi/2)$;

(v) $\Phi_{p,\th_3}(e_3)=\left\{\begin{array}{cc}
                                   -\nu_3 & \text{if }\th_1+\th_2< \pi/2,\\
                                    \nu_3 & \text{if }\th_1+\th_2>\pi/2,\\
                                        0 & \text{if }\th_1+\th_2=\pi/2.
                                  \end{array}
\right.$

\end{pro}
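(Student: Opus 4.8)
The plan is to reduce the entire statement to the algebraic normal form furnished by Proposition \ref{o8} and then read off the tangent data by a single octonionic multiplication. Since $M$ is coassociative, the normal space $N_pM$ is by definition an associative subspace of $\Im\O$, so Proposition \ref{o8} applies with $P=N_pM$ and $Q_0=\Im\H$. This immediately produces an orthonormal basis $\{a_1,a_2,a_3\}$ of $\Im\H$ with $a_3=a_1a_2$, a unit element $\ep\in\H e$, the relation (\ref{o9}) between $\th_3$ and $\th_1+\th_2$, and the explicit unit normal angle directions $\nu_1,\nu_2,\nu_3$ satisfying $\nu_3=\nu_1\nu_2$. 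The $\nu_\a$ part of (i) is thus already in hand.

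Next I would define $e_\a:=-\nu_\a\ep$ for $\a=1,2,3$ and $e_4:=\ep$, which makes (ii) and (iii) hold by construction. Using the product formula of Lemma \ref{mult} (with $A=\H$) I would expand each $e_\a$ into its $\Im\H$-component and its $\H e$-component, obtaining for instance $e_1=\sin\th_1\,a_1-\cos\th_1\,a_1\ep$, and analogous expressions for $e_2,e_3$. From these explicit forms, the relations $\lan e_i,\nu_\a\ran=0$ and $\lan e_i,e_j\ran=\de_{ij}$ are a short check via (\ref{o1}) and the orthogonality of $\H$ and $\H e$, so $\{e_1,e_2,e_3,e_4\}$ is an orthonormal basis of $T_pM$. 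Computing $(\mc{P}_0^\bot e_\a)^T=\cos^2\th_\a\,e_\a$ directly from the components then shows that each $e_\a$ is a tangent angle direction associated to $\th_\a$, which completes (i) and, importantly, places $e_\a$ in the angle space where the anti-involution formula may be invoked.

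For (iv) and (v), I would apply the defining relation (\ref{Phi}) of the anti-involution in its tangent form, $\sec\th_\a\,\mc{P}_0^\bot e_\a=\cos\th_\a\,e_\a-\sin\th_\a\,\Phi_{p,\th_\a}(e_\a)$. Because $\mc{P}_0^\bot e_\a$ merely retains the $a_\a\ep$-component of $e_\a$, this is a linear equation that I can solve for $\Phi_{p,\th_\a}(e_\a)$. For $\a=1,2$ the computation returns exactly $\nu_\a$, giving (iv); the same computation for $\a=3$ yields $\Phi_{p,\th_3}(e_3)=-\cos\th_3\,a_3+\text{sgn}\big(\cos(\th_1+\th_2)\big)\sin\th_3\,a_3\ep$.

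Finally, comparing this expression with $\nu_3=\cos(\th_1+\th_2)a_3-\sin(\th_1+\th_2)a_3\ep$ and invoking (\ref{o9}) produces the three branches of (v): when $\th_1+\th_2<\pi/2$ one has $\cos\th_3=\cos(\th_1+\th_2)>0$ and the expression equals $-\nu_3$; when $\th_1+\th_2>\pi/2$ one has $\cos\th_3=-\cos(\th_1+\th_2)$ and it equals $+\nu_3$; and when $\th_1+\th_2=\pi/2$ we are in the case $\th_3=\pi/2$, where $\Phi_{p,\th_3}$ is set to $0$ by the convention fixed in Section \ref{cja}. The main obstacle is precisely this last bookkeeping: keeping the sign of $\cos(\th_1+\th_2)$ straight as $\th_1+\th_2$ crosses $\pi/2$, and matching $\cos\th_3=|\cos(\th_1+\th_2)|$ and $\sin\th_3=\sin(\th_1+\th_2)$ consistently. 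Everything else is routine octonion algebra built on Lemma \ref{mult} and Proposition \ref{o8}.
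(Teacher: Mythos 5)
Your proposal matches the paper's own proof essentially step for step: both invoke Proposition \ref{o8} for the normal directions $\nu_1,\nu_2,\nu_3$ (with $\nu_3=\nu_1\nu_2$), define $e_\a:=-\nu_\a\ep$ and $e_4:=\ep$, verify orthonormality via (\ref{o1}) and Lemma \ref{mult}, and then solve the anti-involution relation (\ref{Phi}) for $\Phi_{p,\th_\a}(e_\a)$, with the same sign analysis through (\ref{o9}) and the convention $\Phi_{p,\pi/2}=0$ handling the case $\th_1+\th_2=\pi/2$. Your explicit verification that $(\mc{P}_0^\bot e_\a)^T=\cos^2\th_\a\,e_\a$, so that each $e_\a$ genuinely lies in the angle space where (\ref{Phi}) may be invoked, is a point the paper leaves implicit (and the convention you cite is fixed in \S\ref{2.2}, not Section \ref{cja}), but these are refinements of the identical argument rather than a different route.
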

\noindent \textbf{Remark. }Here we additionally define $\Phi_{p,0}=\Phi_{p,\pi/2}=0$, as in \S \ref{2.2}.

\bigskip

Now we extend $\{\nu_1,\nu_2,\nu_3\}$ as an orthonormal normal frame field on $U$, a neighborhood of $p$, such
that $\n_v \nu_\a=0$ for every $v\in T_{p}M$. Lemma \ref{o4} implies $\nu_3(q)=\nu_1(q)\nu_2(q)$ or $-\nu_1(q)\nu_2(q)$
for an arbitrary $q\in U$. Due to the continuity, $\nu_3=\nu_1\nu_2$ on $U$ and differentiating both sides with respect to
$e_i\in T_{p}M$ gives
$$\aligned
-h_{3,ij}e_j&=\ol{\n}_{e_i}\nu_3=\ol{\n}_{e_i}(\nu_1\nu_2)\\
&=(\ol{\n}_{e_i}\nu_1)\nu_2+\nu_1(\ol{\n}_{e_i}\nu_2)\\
&=-h_{1,ij}e_j\nu_2-h_{2,ij}\nu_1 e_j,
\endaligned$$
i.e.
\begin{equation}\label{o10}
 h_{3,ij}e_j=h_{1,ij}e_j\nu_2+h_{2,ij}\nu_1e_j.
\end{equation}
With the aid of Lemma \ref{o4}, Lemma \ref{mult} and Proposition \ref{coass}, a straightforward calculation shows
\begin{equation}\label{o11}\aligned
\text{LHS of (\ref{o10})}&=h_{3,i\a}e_\a+h_{3,i4}e_4\\
&=-h_{3,i\a}\nu_\a e_4+h_{3,i4}e_4
\endaligned
\end{equation}
and
\begin{equation}\label{o12}\aligned
 \text{RHS of (\ref{o10})}=&h_{1,i\a}e_\a\nu_2+h_{1,i4}e_4\nu_2+h_{2,i\a}\nu_1 e_\a+h_{2,i4}\nu_1 e_4\\
=&-h_{1,i\a}(\nu_\a e_4)\nu_2-h_{1,i4}\nu_2 e_4-h_{2,i\a}\nu_1(\nu_\a e_4)+h_{2,i4}\nu_1 e_4\\
=&h_{1,i\a}(\nu_\a \nu_2)e_4-h_{1,i4}\nu_2 e_4-h_{2,i\a}(\nu_\a\nu_1)e_4+h_{2,i4}\nu_1 e_4\\
=&h_{1,i1}\nu_3 e_4-h_{1,i2}e_4-h_{1,i3}\nu_1e_4-h_{1,i4}\nu_2 e_4\\
&+h_{2,i1}e_4+h_{2,i2}\nu_3 e_4-h_{2,i3}\nu_2e_4+h_{2,i4}\nu_1 e_4\\
=&(-h_{1,i2}+h_{2,i1})e_4+(-h_{1,i3}+h_{2,i4})\nu_1 e_4\\
&+(-h_{1,i4}-h_{2,i3})\nu_2 e_4+(h_{1,i1}+h_{2,i2})\nu_3 e_4.
\endaligned
\end{equation}
Comparing with (\ref{o11}) and (\ref{o12}), we arrive at the following conclusion.
\bigskip

\begin{pro}\label{co_sec11}
 Let $M$ be a coassociative submanifold in $\Im \O$, $p\in M$. Let  $\{e_i:1\leq i\leq 4\}$
and $\{\nu_\a:1\leq \a\leq 3\}$ be as in Proposition \ref{coass}. Then for each $1\leq i\leq 4$,
\begin{eqnarray}
         &&h_{3,i1}=h_{1,i3}-h_{2,i4},\label{co1}\\
         &&h_{3,i2}=h_{1,i4}+h_{2,i3},\label{co2}\\
         &&h_{3,i3}=-h_{1,i1}-h_{2,i2},\label{co3}\\
         &&h_{3,i4}=-h_{1,i2}+h_{2,i1}.\label{co4}
\end{eqnarray}
Here $\{h_{ij}^\a:=\lan B_{e_i e_j},\nu_\a\ran(p):1\leq i,j\leq 4,1\leq \a\leq 3\}$ are the coefficients of the
second fundamental form at $p$.

\end{pro}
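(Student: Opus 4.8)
The plan is to exploit the octonionic relation $\nu_3=\nu_1\nu_2$ supplied by Proposition \ref{coass} and differentiate it. First I would extend $\{\nu_1,\nu_2,\nu_3\}$ to a local orthonormal normal frame on a neighborhood $U$ of $p$ that is parallel at $p$, i.e.\ $\n_v\nu_\a=0$ for every $v\in T_pM$; the Weingarten equations then give $\ol\n_{e_i}\nu_\a=-h_{\a,ij}e_j$ at $p$, where $h_{\a,ij}=\lan B_{e_ie_j},\nu_\a\ran$. By Lemma \ref{o4} together with continuity, the identity $\nu_3=\nu_1\nu_2$ persists throughout $U$, so differentiating it in the direction $e_i$ via the ordinary product rule for the bilinear octonion multiplication yields the relation (\ref{o10}), namely $h_{3,ij}e_j=h_{1,ij}e_j\nu_2+h_{2,ij}\nu_1e_j$.

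Next I would rewrite both sides of (\ref{o10}) in the frame $\{e_4,\nu_1e_4,\nu_2e_4,\nu_3e_4\}$. Using $e_4=\ep\in\H e$ and $e_\a=-\nu_\a e_4$ from Proposition \ref{coass}(ii)(iii), the left-hand side collapses at once to $-h_{3,i\a}\nu_\a e_4+h_{3,i4}e_4$, which is (\ref{o11}). For the right-hand side one must reduce each of the products $(\nu_\a e_4)\nu_2$, $e_4\nu_2$, $\nu_1(\nu_\a e_4)$ and $\nu_1e_4$ by means of the multiplication rule of Lemma \ref{mult}, applied to $A:=\Re\O\oplus N_pM\cong\H$ with $\ep\in A^\bot$, and keeping in mind that $\bar\nu_\a=-\nu_\a$ because each $\nu_\a$ is imaginary. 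Substituting the quaternionic products $\nu_1\nu_2=\nu_3$, $\nu_2\nu_3=\nu_1$, $\nu_3\nu_1=\nu_2$ and $\nu_\a^2=-1$ from Lemma \ref{o4} then turns every term into a multiple of one of $e_4,\nu_1e_4,\nu_2e_4,\nu_3e_4$, producing (\ref{o12}).

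Finally I would note that right multiplication by $\ep$ is an isometry by (\ref{o1}), so $\{e_4,\nu_1e_4,\nu_2e_4,\nu_3e_4\}$ is an orthonormal, hence linearly independent, system spanning $A^\bot=A\ep$. Comparing the coefficients of these four vectors in (\ref{o11}) and (\ref{o12}) reads off the four identities (\ref{co1})--(\ref{co4}) simultaneously, for each fixed $1\le i\le 4$.

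I expect the main obstacle to be the bookkeeping forced by non-associativity: the products appearing in (\ref{o10}) cannot be simplified by naive associativity and must be expanded strictly through Lemma \ref{mult}, with the conjugations $\bar\nu_\a=-\nu_\a$ and the signs of the quaternionic multiplication table tracked carefully, since a single sign slip would corrupt the coefficient comparison that delivers the result.
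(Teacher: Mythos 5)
Your proposal is correct and takes essentially the same route as the paper: extend $\{\nu_1,\nu_2,\nu_3\}$ to a normal frame with $\n_v\nu_\a=0$ at $p$, use Lemma \ref{o4} plus continuity to propagate $\nu_3=\nu_1\nu_2$ over a neighborhood, differentiate to obtain (\ref{o10}), and then expand both sides in the orthonormal system $\{e_4,\nu_1e_4,\nu_2e_4,\nu_3e_4\}$ via Lemma \ref{mult} (with $\bar\nu_\a=-\nu_\a$ and the quaternionic multiplication table) before comparing coefficients, exactly as in the paper's computation (\ref{o11})--(\ref{o12}). The sign bookkeeping you flag as the main obstacle is indeed where all the content lies, and your outline handles it properly.
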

\bigskip

\subsection{The characterization of the Lawson-Osserman's cone}
Now we additionaly assume $M$ has CJA relative to $\Im \H$. Let $p_0\in M$,
$0\leq \th_1\leq \th_2\leq \th_3\leq \pi/2$ be the Jordan angles between $N_{p_0}M$ and $\Im\H$,
$\{\nu_1,\nu_2,\nu_3\}$ be the orthonormal basis of $N_{p_0}M$ and $\{e_1,e_2,e_3,e_4\}$ be the orthonormal
basis of $T_{p_0}M$, satisfying the properties in Proposition \ref{coass}. Then (\ref{sec1}) and Lemma \ref{sec25}
implies
\begin{equation}\label{co_sec1}
 h_{\a,\a i}=0 \qquad \forall 1\leq \a\leq 3,1\leq i\leq 4
\end{equation}
and substituting it into (\ref{co1})-(\ref{co4}) gives
\begin{eqnarray}
 &&h_{1,23}=h_{2,31}=h_{3,12};\label{co5}\\
&&h_{1,22}=-h_{3,24},\quad h_{1,33}=h_{2,34},\quad h_{1,44}=-h_{2,34}+h_{3,24};\label{co6}\\
&&h_{2,33}=-h_{1,34},\quad h_{2,11}=h_{3,14},\quad h_{2,44}=-h_{3,14}+h_{1,34};\label{co7}\\
&&h_{3,11}=-h_{2,14},\quad h_{3,22}=h_{1,24},\quad h_{3,44}=-h_{1,24}+h_{2,14}.\label{co8}
\end{eqnarray}
Furthermore, applying Lemma \ref{sec30} and \ref{sec31} yields the following propositions.
\bigskip

\begin{pro}\label{p1}
 Let $M$ be a coassociative submanifold in $\Im \O$, with CJA relative to $\Im \H$. If
$g^N\leq 2$, $\pi/2\notin \text{Arg}^N$ and $\text{Arg}^N\neq \{\arccos (\sqrt{6}/6),\arccos (2/3)\}$, then $M$ has to be affine linear.
\end{pro}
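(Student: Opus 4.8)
The aim is to show that the second fundamental form $B$ vanishes identically, so that $M$ is totally geodesic and hence an open piece of an affine $4$-plane. Recall first that $M$, being coassociative, is minimal; in fact \eqref{co_sec1} together with \eqref{co6}--\eqref{co8} already forces each partial trace $\sum_i h_{\a,ii}$ to vanish, so $H\equiv 0$ and in particular $\nabla H\equiv 0$. The plan is then to enumerate the possible normal angle configurations and, in each, to combine the algebraic identities \eqref{co_sec1} and \eqref{co5}--\eqref{co8} coming from the coassociative structure with the differential constraints of Lemma \ref{sec30} and Lemma \ref{sec31}.

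First I would list the admissible configurations. Writing the normal Jordan angles as $0\le\th_1\le\th_2\le\th_3<\pi/2$ (the strict bound because $\pi/2\notin\Arg^N$), the coassociative relation \eqref{o9} forces $\th_3\in\{\th_1+\th_2,\ \pi-\th_1-\th_2\}$; imposing $g^N\le 2$ then leaves only four possibilities: (a) all three angles equal, which by \eqref{o9} means $\Arg^N=\{0\}$ or $\Arg^N=\{\pi/3\}$; (b) $\th_1=\th_2=\a\in(0,\pi/2)$ with $\th_3=2\a$ or $\th_3=\pi-2\a$; (c) $\th_1=0<\th_2=\th_3$; and (d) $0<\th_1<\th_2=\th_3$ with $2\th_2=\pi-\th_1$. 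The configurations $\{0\}$, $\{\pi/3\}$ and (c) reduce at once to Theorem \ref{tri}: one checks $g^T=g^N=1$, then $g^T=2,g^N=1,\pi/2\notin\Arg^N$, then $g^T=g^N=2$ with $\Arg^N\ne\{0,\pi/2\}$, respectively, and minimality supplies the parallel-mean-curvature hypotheses of parts (i), (iii) and (iv). Note that (d) with $\th_1=\arccos(2/3)$ and $\th_2=\arccos(\sqrt{6}/6)$ is exactly the excluded Lawson--Osserman configuration.

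In the remaining cases (b) and (d) there is a single nonzero angle $\gamma$ of multiplicity two and a distinct nonzero angle $\delta$ of multiplicity one, whence $\Arg^T=\{\gamma,\delta,0\}$ and $g^T=3$. Fixing the adapted frames of Proposition \ref{coass}, I would first use \eqref{co_sec1} and \eqref{co5}--\eqref{co8} to express every surviving component $h_{\a,ij}$ in terms of a short list of free scalars. Since $m_\gamma^T=2$, Lemma \ref{sec30} then applies nontrivially with $\th=\gamma$: its left-hand side collapses to $\k_{\gamma\delta}R_{\gamma\delta}$ and its right-hand side to $3(\k_{\gamma\delta}U_{\gamma\delta}+\k_{\gamma 0}U_{\gamma 0})$, and evaluating \eqref{R} and \eqref{U} on the frame rewrites $R_{\gamma\delta}$, $U_{\gamma\delta}$, $U_{\gamma 0}$ as explicit quadratics in the free scalars. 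In parallel I would substitute \eqref{sec9} (or \eqref{sec20}, according to whether the relevant $U_{\gamma\si}$ vanishes, which is exactly where Lemma \ref{sec31} enters) into $\langle\nabla_v H,\Phi_\gamma(v)\rangle=0$ and sum over an orthonormal tangent basis; because $H\equiv 0$ this yields further quadratic identities whose coefficients are the explicit constants $\k_{\gamma\delta}$, $\k_{\gamma 0}$ and $\k_{\delta\gamma}$, evaluated through the angle relation in \eqref{o9}.

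The decisive step, and the one I expect to be the main obstacle, is the sign/vanishing analysis of these coefficient combinations. Using the definition \eqref{k} together with $2\th_2=\pi-\th_1$ (equivalently $\cos 2\gamma=-\cos\delta$ in case (d)), a short computation gives $\k_{\gamma\delta}-3\k_{\gamma 0}=\dfrac{\sin\delta}{\cos\delta+1}\cdot\dfrac{2(3\cos\delta-2)}{2\cos\delta-1}$, which vanishes precisely when $\cos\delta=2/3$, i.e.\ exactly at the Lawson--Osserman angles $\delta=\arccos(2/3)$ and $\gamma=\arccos(\sqrt{6}/6)$. For every other admissible $\delta$, and throughout case (b), one must check that the combined quadratic identities have sign-definite coefficients, so that their only common solution is the trivial one and all free scalars vanish. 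Carrying out this bookkeeping --- confirming definiteness away from the single excluded configuration --- is the delicate part; once it is done, $B\equiv 0$ on $M$ and $M$ is affine linear.
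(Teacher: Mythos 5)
Your proposal is correct and follows the paper's own proof essentially step for step: the same enumeration of angle configurations via Proposition \ref{o8} (the paper's Cases I--IV are your (c), (b), (a), (d)), the same reduction of the degenerate cases to Theorem \ref{tri} (i), (iii), (iv) using minimality, and in the two nontrivial cases the same combination of \eqref{co_sec1}, \eqref{co5}--\eqref{co8} with Lemma \ref{sec30}, where your coefficient $\k_{\gamma\delta}-3\k_{\gamma 0}=\f{\sin\delta}{1+\cos\delta}\cdot\f{2(2-3\cos\delta)}{1-2\cos\delta}$ agrees, via $\cos 2\gamma=-\cos\delta$, with the paper's $\f{2\cos\th(5-6\sin^2\th)}{\sin 3\th}$ and isolates exactly the excluded Lawson--Osserman angles (and in your case (b) the paper verifies the analogous quantity $\f{\cos\th(\sin\th+3\sin 3\th)}{\sin 3\th\,\sin\th}$ is positive on $(0,\pi/3)\setminus\{\pi/4\}$). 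The bookkeeping you defer is precisely how the paper finishes and is less delicate than you anticipate: once Lemma \ref{sec30} kills the single scalar, the shape operator of the multiplicity-one normal direction vanishes identically, and the remaining identity $\lan \n_{e}H,\Phi_\delta(e)\ran=0$ taken along the multiplicity-one angle direction (rather than your $\Phi_\gamma(v)$, though that variant also works with coefficient $\k_{\gamma\delta}+\k_{\gamma 0}\neq 0$) reduces to a single nonzero constant $\k_{\delta\gamma}$ times a sum of squares, so sign-definiteness is automatic away from the excluded configuration.
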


\begin{proof}
 Let $p_0$ be an arbitrary point in $M$, and the notations $\th_\a$, $\nu_\a$, $e_i$, $h_{\a,ij}$ are same as above.

\textbf{Case I.} $\th_1=0$ and $\th_2=\th_3<\pi/2$. Then $g^T=g^N\leq 2$ and the equality holds
if and only if $\th_2\neq 0$. It is well-known that coassociative submanfolds are absolutely area minimizing (see \cite{h-l}
\S IV.2.B). By Theorem \ref{tri}, $M$ has to be an open set of an affine 4-plane.

\textbf{Case II.} $\th_1=\th_2\in (0,\pi/4)\cup (\pi/4,\pi/3)$ and $\th_3=\left\{\begin{array}{cc}
                                                           2\th_1  & \text{if }\th_1< \pi/4,\\
                                                           \pi-2\th_1 & \text{if }\th_1>\pi/4.
                                                          \end{array}\right.$
Denote $\th:=\th_1$, then $\text{Arg}^N=\{\th,\th_3\}$, $\text{Arg}^T=\{0,\th,\th_3\}$;
$T_{p_0,\th}M=\text{span}\{e_1,e_2\}$ and $N_{p_0,\th}M=\text{span}\{\nu_1,\nu_2\}$
with $\nu_\a=\Phi_\th(e_\a)$ for each $1\leq \a\leq 2$;
$T_{p_0,\th_3}M=\text{span}\{e_3\}$, $N_{p_0,\th_3}M=\text{span}\{\nu_3\}$ and
$$\Phi_{\th_3}(e_3)=\left\{\begin{array}{cc}
                           \nu_3 & \text{if }\th_1>\pi/4,\\
                          -\nu_3 & \text{if }\th_1<\pi/4;
                          \end{array}
\right.$$ $T_{p_0,0}=\text{span}\{e_4\}$. Lemma \ref{sec6} implies
\begin{equation}\label{co_sec7}
 h_{1,22}=h_{2,11}=0.
\end{equation}
Substituting the above equation into (\ref{co6}) and (\ref{co7}), we get
\begin{equation}\label{co_sec2}
 h_{3,24}=h_{3,14}=0.
\end{equation}
Applying Lemma \ref{sec2} gives
\begin{equation}
 0=h_{1,23}+h_{2,13}=h_{1,24}+h_{2,14}.
\end{equation}
In conjunction with (\ref{co5}), we have
\begin{equation}\label{co_sec3}
 h_{1,23}=h_{2,31}=h_{3,12}=0.
\end{equation}
Let $R_{\th\si}$ and $U_{\th\si}$ be tensors of type $(4,0)$, defined in (\ref{R}) and (\ref{U}), respectively. Then
\begin{equation}\aligned
 R_{\th\th_3}(e_1,e_2,e_1,e_2)&=\lan B_{e_1e_1}^{\th_3},B_{e_2e_2}^{\th_3}\ran-\lan B_{e_1e_2}^{\th_3},B_{e_2e_1}^{\th_3}\ran\\
&=h_{3,11}h_{3,22}-h_{3,12}h_{3,21}\\
&=-h_{2,14}h_{1,24}=h_{1,24}^2,
\endaligned
\end{equation}
\begin{equation}\aligned
U_{\th\th_3}(e_1,e_2,e_1,e_2)&=\lan (A^{\Phi_\th(e_1)}e_1)_{\th_3},(A^{\Phi_\th(e_2)}e_2)_{\th_3}\ran-\lan (A^{\Phi_\th(e_2)}e_1)_{\th_3},(A^{\Phi_\th(e_1)}e_2)_{\th_3}\ran\\
&=\lan A^{\nu_1}e_1,e_3\ran\lan A^{\nu_2}e_2,e_3\ran-\lan A^{\nu_2}e_1,e_3\ran\lan A^{\nu_1}e_2,e_3\ran\\
&=h_{1,13}h_{2,23}-h_{2,13}h_{1,23}=0
\endaligned
\end{equation}
and
\begin{equation}\aligned
U_{\th 0}(e_1,e_2,e_1,e_2)&=\lan (A^{\Phi_\th(e_1)}e_1)_{0},(A^{\Phi_\th(e_2)}e_2)_{0}\ran-\lan (A^{\Phi_\th(e_2)}e_1)_{0},(A^{\Phi_\th(e_1)}e_2)_{0}\ran\\
&=\lan A^{\nu_1}e_1,e_4\ran\lan A^{\nu_2}e_2,e_4\ran-\lan A^{\nu_2}e_1,e_4\ran\lan A^{\nu_1}e_2,e_4\ran\\
&=h_{1,14}h_{2,24}-h_{2,14}h_{1,24}=h_{1,24}^2.
\endaligned
\end{equation}
By (\ref{sec19}),
$$\aligned
0&=\sum_{\si\in \text{Arg}^N,\si\neq \th}\k_{\th\si}R_{\th\si}(e_1,e_2,e_1,e_2)-3\sum_{\si\in \text{Arg}^T,\si\neq \th}
\k_{\th\si}U_{\th\si}(e_1,e_2,e_1,e_2)\\
&=\k_{\th\th_3}R_{\th\th_3}(e_1,e_2,e_1,e_2)-3\k_{\th\th_3}U_{\th\th_3}(e_1,e_2,e_1,e_2)-3\k_{\th 0}U_{\th 0}(e_1,e_2,e_1,e_2)\\
&=(\k_{\th\th_3}-3\k_{\th 0})h_{1,24}^2
\endaligned$$
where
$$\aligned
&\k_{\th\th_3}-3\k_{\th 0}=\f{\sin 2\th}{\cos 2\th-\cos 2\th_3}-\f{3\sin 2\th}{\cos 2\th-1}\\
=&\f{\sin 2\th}{\cos 2\th-\cos 4\th}+\f{3\sin 2\th}{1-\cos 2\th}=\f{2\cos\th\sin\th}{2\sin 3\th \sin \th}+\f{6\cos\th\sin\th}{2\sin^2\th}\\
=&\f{\cos\th(\sin\th+3\sin 3\th)}{\sin 3\th\sin\th}>0.
\endaligned$$
Hence $h_{1,24}=0$ and moreover
\begin{equation}\label{co_sec8}
 h_{3,22}=h_{1,24}=0,\quad h_{3,11}=-h_{2,14}=h_{1,24}=0,\quad h_{3,44}=-h_{1,24}+h_{2,14}=0.
\end{equation}
In conjunction with (\ref{co_sec1}), (\ref{co_sec2}) and (\ref{co_sec3}), we obtain
\begin{equation}\label{co_sec4}
 A^{\nu_3}=0.
\end{equation}

Putting $v=w=e_3$ in (\ref{sec9}) gives
\begin{equation}\label{co_sec5}\aligned
 \lan (\n_{e_3}B)_{e_3e_3},\Phi_{\th_3}(e_3)\ran=&(1/3)\k_{\th_3 \th}\left(\lan B_{e_3 e_3}^\th,B_{e_3 e_3}^\th\ran+2|B_{e_3 e_3}^\th|^2\right)\\
&-2\k_{\th\th_3}\lan B_{e_3e_3}^\th,\Phi_\th(A^{\Phi_{\th_3}(e_3)}e_3)\ran\\
=&\k_{\th_3\th}|B_{e_3e_3}^\th|^2=\k_{\th_3\th}(h_{1,33}^2+h_{2,33}^2)\\
=&\k_{\th_3\th}(h_{2,34}^2+h_{1,34}^2)
\endaligned
\end{equation}
(where we have used (\ref{co_sec4}), (\ref{co6}) and (\ref{co7})). By Lemma \ref{sec31},
\begin{equation}\aligned
 \lan (\n_{e_3}B)_{e_4e_4},\Phi_{\th_3}(e_3)\ran=&-2\k_{\th\th_3}\lan B_{e_3e_4}^\th,\Phi_\th(A^{\Phi_{\th_3}(e_3)}e_4)_\th\ran\\
&+\k_{\th_3 \th}|B_{e_3e_4}^\th|^2+\k_{\th_3 \th}|(A^{\Phi_{\th_3}(e_3)}e_4)_\th|^2+\k_{\th_3 0}|(A^{\Phi_{\th_3}(e_3)}e_4)_0|^2\\
=&\k_{\th_3 \th}(h_{1,34}^2+h_{2,34}^2),
\endaligned
\end{equation}
\begin{equation}\aligned
 \lan (\n_{e_3}B)_{e_1e_1},\Phi_{\th_3}(e_3)\ran=&-2\k_{\th\th_3}\lan B_{e_3e_1}^\th,\Phi_\th(A^{\Phi_{\th_3}(e_3)}e_1)_\th\ran\\
&+\k_{\th_3 \th}|B_{e_3e_1}^\th|^2+\k_{\th_3 \th}|(A^{\Phi_{\th_3}(e_3)}e_1)_\th|^2+\k_{\th_3 0}|(A^{\Phi_{\th_3}(e_3)}e_1)_0|^2\\
=&\k_{\th_3 \th}(h_{1,31}^2+h_{2,31}^2)=0
\endaligned
\end{equation}
and similarly
\begin{equation}\label{co_sec6}
 \lan (\n_{e_3}B)_{e_2e_2},\Phi_{\th_3}(e_3)\ran=0.
\end{equation}
Combining (\ref{co_sec5})-(\ref{co_sec6}) gives
$$\aligned
0&=\lan \n_{e_3}H,\Phi_{\th_3}(e_3)\ran=\sum_{i=1}^4 \lan (\n_{e_3}B)_{e_i e_i},\Phi_{\th_3}(e_3)\ran\\
&=2\k_{\th_3 \th}(h_{1,34}^2+h_{2,34}^2),
\endaligned$$
which forces $h_{1,34}=h_{2,34}=0$ (since $\k_{\th_3 \th}=\f{\sin 2\th_3}{\cos 2\th_3-\cos 2\th}\neq 0$) and moreover
\begin{equation}
 \aligned
h_{1,33}=h_{2,34}=0,\quad &h_{1,44}=-h_{2,34}+h_{3,24}=0;\\
h_{2,33}=-h_{1,34}=0,\quad & h_{2,44}=-h_{3,14}+h_{1,34}=0.
\endaligned
\end{equation}
In conjunction with (\ref{co_sec1}), (\ref{co_sec7}), (\ref{co_sec3}), (\ref{co_sec8}) and (\ref{co_sec4}), we have
$B(p_0)=0$. The arbitrariness of $p_0$ implies $B\equiv 0$, i.e. $M$ is totally geodesic.

\textbf{Case III.} $\th_1=\th_2=\th_3=\pi/3$. Then $g^N=1$, $g^T=2$ and Theorem \ref{tri} implies $M$ is affine linear.

\textbf{Case IV.} $\th_2=\th_3\in (\pi/3,\arccos (\sqrt{6}/6))\cup (\arccos (\sqrt{6}/6),\pi/2)$ and $\th_1=\pi-2\th_2$.
Denote $\th:=\th_2$, then $\text{Arg}^N=\{\th,\th_1\}$, $\text{Arg}^T=\{0,\th,\th_1\}$;
$T_{p_0,\th}M=\text{span}\{e_2,e_3\}$ and $N_{p_0,\th}M=\text{span}\{\nu_2,\nu_3\}$ with $\nu_\a=\Phi_\th(e_\a)$
for each $2\leq \a\leq 3$; $T_{p_0,\th_1}M=\text{span}\{e_1\}$ and $N_{p_0,\th_1}M=\text{span}\{\nu_1\}$
with $\nu_1=\Phi_{\th_1}(e_1)$; $T_{p_0,0}M=\text{span}\{e_4\}$. Applying Lemma \ref{sec2} and \ref{sec6} gives
\begin{equation}\label{co_sec12}
 h_{2,33}=h_{3,22}=0,\quad 0=h_{2,31}+h_{3,21}=h_{2,34}+h_{3,24}.
\end{equation}
Substituting the above equations into (\ref{co5})-(\ref{co8}) yields
\begin{eqnarray}
 &&h_{1,23}=h_{2,31}=h_{3,12}=0;\label{co_sec17}\\
&&h_{1,22}=-h_{3,24}=h_{2,34}=h_{1,33},\quad h_{1,44}=-2h_{2,34};\\
&&h_{1,34}=0,\quad h_{2,11}=h_{3,14}=-h_{2,44};\label{co_sec9}\\
&&h_{1,24}=0,\quad h_{3,11}=-h_{2,14}=-h_{3,44}.\label{co_sec10}
\end{eqnarray}
A straightforward calculation shows
\begin{equation}\aligned
 R_{\th\th_1}(e_2,e_3,e_2,e_3)&=\lan B_{e_2e_2}^{\th_1},B_{e_3e_3}^{\th_1}\ran-\lan B_{e_2e_3}^{\th_1},B_{e_3e_2}^{\th_1}\ran\\
&=h_{1,22}h_{1,33}-h_{1,23}h_{1,32}=h_{2,34}^2,
\endaligned
\end{equation}
\begin{equation}\aligned
 U_{\th\th_1}(e_2,e_3,e_2,e_3)&=\lan (A^{\Phi_\th(e_2)}e_2)_{\th_1},A^{\Phi_\th(e_3)}e_3)_{\th_1}\ran-
\lan A^{\Phi_\th(e_3)}e_2)_{\th_1},A^{\Phi_\th(e_2)}e_3)_{\th_1}\ran\\
&=h_{2,21}h_{3,31}-h_{3,21}h_{2,31}=0,
\endaligned
\end{equation}
\begin{equation}\aligned
 U_{\th 0}(e_2,e_3,e_2,e_3)&=\lan A^{\Phi_\th(e_2)}e_2)_{0}, A^{\Phi_\th(e_3)}e_3)_{0}\ran-\lan  A^{\Phi_\th(e_3)}e_2)_{0},
 A^{\Phi_\th(e_2)}e_3)_{0}\ran\\
&=h_{2,24}h_{3,34}-h_{3,24}h_{2,34}=h_{2,34}^2,
\endaligned
\end{equation}
and then Lemma \ref{sec30} implies
\begin{equation}\aligned
 0&=\sum_{\si\in \text{Arg}^N,\si\neq 0} \k_{\th\si}R_{\th\si}(e_2,e_3,e_2,e_3)-3\sum_{\si\in \text{Arg}^T,\si\neq \th}
\k_{\th\si}U_{\th\si}(e_2,e_3,e_2,e_3)\\
&=\k_{\th\th_1}R_{\th\th_1}(e_2,e_3,e_2,e_3)-3\k_{\th\th_1}U_{\th\th_1}(e_2,e_3,e_2,e_3)-3\k_{\th 0}U_{\th 0}(e_2,e_3,e_2,e_3)\\
&=(\k_{\th\th_1}-3\k_{\th 0})h_{2,34}^2,
\endaligned
\end{equation}
where
\begin{equation}
 \aligned
&\k_{\th\th_1}-3\k_{\th 0}=\f{\sin 2\th}{\cos 2\th-\cos 2\th_1}-\f{3\sin 2\th}{\cos 2\th-1}\\
=&\f{\sin 2\th}{\cos 2\th-\cos 4\th}+\f{3\sin 2\th}{1-\cos 2\th}=\f{\cos\th(\sin\th+3\sin 3\th)}{\sin 3\th\sin \th}\\
=&\f{2\cos\th(5-6\sin^2 \th)}{\sin 3\th}.
\endaligned
\end{equation}
Since $\th\neq \arccos (\sqrt{6}/6)$, $\sin^2\th\neq 5/6$ and then $\k_{\th\th_1}-3\k_{\th 0}\neq 0$. Hence
$h_{2,34}=0$ and moreover
\begin{equation}
 h_{1,22}=h_{1,33}=h_{1,44}=h_{3,24}=h_{2,34}=0.
\end{equation}
In conjunction with (\ref{co_sec1}), (\ref{co_sec17}), (\ref{co_sec9}) and (\ref{co_sec10}),
we have $A^{\nu_1}=0$. With the aid of Lemma \ref{sec31}, one can then proceed as in Case II to deduce that
$B(p_0)=0$. Since $p_0$ is arbitrary, $M$ has to be affine linear.

\end{proof}

\begin{pro}\label{p2}
Let $M$ be a coassociative submanifold of $\Im \O$. Assume $M$ has CJA relative to $\Im \H$, and $\text{Arg}^N=\{\arccos (\sqrt{6}/6),\arccos(2/3)\}$,
then either $M$ is affine linear, or there exists $a_0\in \text{Sp}_1$, such that $M$ is a translate of an open subset of
$M(a_0)$. Here $M(a_0)$ denotes the Lawson-Osserman's cone, see (\ref{l-o-cone}).
\end{pro}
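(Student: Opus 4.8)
The plan is to push the computation of Proposition \ref{p1}, Case IV to the single angle it was forced to exclude, and to show that there the surviving second fundamental form is exactly that of the Lawson-Osserman cone. First I would fix an arbitrary $p_0\in M$ and determine the angle configuration. Since $M$ is coassociative, Proposition \ref{o8} constrains the three normal Jordan angles $\th_1\le\th_2\le\th_3$ by $\th_3=\pi-(\th_1+\th_2)$ when $\th_1+\th_2>\pi/2$; the only ordered triple with underlying set $\{\arccos(2/3),\arccos(\sqrt{6}/6)\}$ and multiplicities summing to $3$ is $\th_1=\arccos(2/3)$, $\th_2=\th_3=\th:=\arccos(\sqrt{6}/6)$, for which indeed $\cos(\pi-2\th)=1-2\cos^2\th=2/3$, so $\th_1=\pi-2\th$. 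This is precisely the configuration of Case IV of Proposition \ref{p1}, with $g^N=2$, $g^T=3$ and $\Arg^T=\{0,\th,\th_1\}$. Adopting the adapted frame $\{e_i\}$, $\{\nu_\a\}$ of Proposition \ref{coass}, I observe that every purely algebraic relation obtained there --- \eqref{co_sec1}, \eqref{co_sec12}, \eqref{co_sec17} together with \eqref{co5}--\eqref{co8} --- used nothing about the numerical value of $\th$, hence holds verbatim and reduces the whole second fundamental form at $p_0$ to three scalars, say $\lambda:=h_{2,34}$, $\mu:=h_{2,11}$ and $\rho:=h_{3,11}$, through which all remaining coefficients are expressed (for instance $h_{1,22}=h_{1,33}=\lambda$, $h_{1,44}=-2\lambda$, $h_{3,14}=\mu$, $h_{2,14}=-\rho$).

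Next I would revisit the identity that annihilated $B$ in Case IV. Lemma \ref{sec30} applied to $(e_2,e_3)$ gives $(\k_{\th\th_1}-3\k_{\th 0})\lambda^2=0$, but here the bracket equals $\frac{2\cos\th(5-6\sin^2\th)}{\sin 3\th}$, which vanishes precisely because $\sin^2\th=5/6$ at $\th=\arccos(\sqrt{6}/6)$; thus this application is vacuous and $\lambda$ may be nonzero. To exhaust the remaining freedom I would feed the other vector pairs into Lemmas \ref{sec30} and \ref{sec31}, evaluate the connection coefficients via Lemmas \ref{sec5} and \ref{sec7}, and impose the Codazzi symmetry $(\n_v B)_{ww}=(\n_w B)_{wv}$ together with minimality (coassociative submanifolds are minimal, indeed area minimizing). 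I expect this to produce the dichotomy governed by $\lambda$: if $\lambda\equiv 0$ then $A^{\nu_1}=\mathrm{diag}(0,\lambda,\lambda,-2\lambda)=0$, and the part of the Case IV argument following $A^{\nu_1}=0$, which does not use the value of $\th$, forces $B\equiv 0$, so $M$ is an affine $4$-plane; otherwise $\lambda$ is nonzero on a nonempty open set $\Om\subset M$, where the same identities should pin down $\mu,\rho$ and the covariant derivatives of $\lambda$ into the single rigid pattern compatible with the structure equations.

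Finally, on $\Om$ I would read off the geometry. From $B_{e_1e_1}=\mu\,\nu_2+\rho\,\nu_3$ and the tangential derivative $\n_{e_1}e_1$ (computed from the tensors $S_{\th_1\si}$ via Lemma \ref{sec5}) I would show that the integral curves of $e_1$, i.e. the $\th_1$-angle lines, are straight lines; since these are exactly the rays in the model (Proposition \ref{lo-cone}), this exhibits $M$, after a translation carrying their common apex to the origin, as a cone over a $3$-dimensional link $N\subset S^6$. The link $N$ is minimal in $S^6$ and has CJA with precisely the angle data of the link of $M(a_0)$; comparing with the $\mathrm{Sp}_1$-invariant parametrization $F(q,r)=r[(\sqrt{5}/2)\,qa\bar q+\bar q e]$ of Proposition \ref{lo-cone}, aligning the relative positions of $N_pM$ and $\Im\H$ pointwise by Proposition \ref{position}, and invoking the uniqueness part of the fundamental theorem of submanifolds, I would identify $M$ with a translate of an open subset of some $M(a_0)$, the real-analyticity of $M$ then propagating the conclusion from $\Om$ to all of $M$. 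The main obstacle is exactly this reconstruction: converting the pointwise second fundamental form into the global conical shape, solving the first-order system for $(\lambda,\mu,\rho)$ explicitly enough to recognize the model, and determining both $a_0$ and the translation vector --- all while using analyticity and connectedness to exclude a mixture of the affine and conical behaviours on different parts of $M$.
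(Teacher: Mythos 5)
Your plan reproduces the skeleton of the paper's proof faithfully: the same angle configuration (forced by Proposition \ref{o8}), the same algebraic reduction of $B$ at a point to the three scalars $\lambda=h_{2,34}$, $\mu=h_{2,11}$, $\rho=h_{3,11}$, straight $\th_1$-angle lines, a cone structure, identification with $M(a_0)$, and analyticity to globalize. But the step you only ``expect'' to work is the crux, and your proposal misses the mechanism that makes it work: the paper's Step I proves $\mu=\rho=0$ (hence also $h_{2,14}=h_{3,14}=h_{2,44}=h_{3,44}=0$) \emph{unconditionally}, not merely on your set $\Om$ or in the case $\lambda\equiv 0$. This comes from minimality traced against $\Phi_{\th_1}(e_1)$ via (\ref{sec9}) and Lemma \ref{sec31}, which yields $0=\lan \n_{e_1}H,\Phi_{\th_1}(e_1)\ran=\k_{\th_1\th}\big(\mu^2+\rho^2+h_{2,14}^2+h_{3,14}^2\big)+2\big(2\k_{\th_1 0}+\k_{\th_1\th}\big)h^2$ with $h:=h_{1,22}=\lambda$, together with a \emph{second} numerical coincidence of the Lawson--Osserman angles that you did not identify: $2\k_{\th_1 0}+\k_{\th_1\th}=\f{2\sin 2\th_1(3\cos\th_1-2)(\cos\th_1+1)}{(\cos 2\th_1-1)(\cos 2\th_1+\cos\th_1)}=0$ precisely because $\cos\th_1=2/3$ (see (\ref{k10})). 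You correctly spotted the first coincidence, $\k_{\th\th_1}=3\k_{\th 0}$ at $\sin^2\th=5/6$, which liberates $\lambda$; but without the second one the displayed trace identity would instead force $h=0$ and collapse your dichotomy, while with it $\mu,\rho$ die and $h$ alone survives. Note also that your straight-line claim hinges on this: $\ol{\n}_{e_1}e_1=B_{e_1e_1}+\G_{11}^i e_i$, and both $B_{e_1e_1}=\mu\nu_2+\rho\nu_3$ and the connection terms $\G_{11}^2=\k_{\th\th_1}\mu$, $\G_{11}^3=\k_{\th\th_1}\rho$ (from Lemma \ref{sec5}) vanish only after Step I, so the ``rigid pattern'' cannot be read off before it.

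On the reconstruction, the paper takes a more elementary and self-contained route than your appeal to Proposition \ref{position} plus the uniqueness part of the fundamental theorem of submanifolds. It shows $e_1^\bot$ is integrable (Frobenius, using the computed $\G$'s), that $h$ is constant $=h_0$ on a leaf $N$ by the Codazzi equations, and that for $h_0>0$ the apex map $\psi(p)=\mb{X}(p)+R_0e_1(p)$ with $R_0=(\k_{\th_1\th}h_0)^{-1}$ satisfies $\psi_*\equiv 0$ --- where again the simultaneous vanishing of the factors $1-R_0\k_{\th_1\th}h_0$ and $1+2R_0\k_{\th_1 0}h_0$ is exactly the identity $2\k_{\th_1 0}+\k_{\th_1\th}=0$ --- so $W$ is a cone with a well-defined vertex; it then exhibits both $N$ and the link of $M(a_0)$ as integral manifolds of one explicit $3$-distribution $E$ on $S=\{x\in\Im\O:|x|=R_0,\ |\mc{P}_0^\bot x|=\cos\th_1 R_0\}$ and concludes by uniqueness of integral manifolds, with $a_0=c_0b_0\bar c_0$ read off from $\mb{X}(p_0)$. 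Your route could in principle succeed, but it requires computing the full induced metric, second fundamental form and normal connection of the model link, and --- a point you should flag --- the aligning rigid motion from Proposition \ref{position} must be taken to preserve $\Im\H$, since the stated conclusion allows only a translation plus the $\text{Sp}_1$-freedom in $a_0$, not an arbitrary ambient congruence. So the proposal is the right strategy, but it stops exactly where the two decisive computations, both powered by $\cos\th_1=2/3$, must be carried out.
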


\begin{proof}
Let $\th_1:=\arccos(2/3)$, $\th_2=\th_3:=\arccos(\sqrt{6}/6)$
and $\th:=\th_2$. For an arbitrary point $p_0\in M$, let $\{e_1,e_2,e_3,e_4\}$ be an orthonormal tangent frame field and $\{\nu_1,\nu_2,\nu_3\}$
be an orthonormal normal frame field on $U$, a neighborhood of $p_0$, such that for any $p\in M$, $e_i(p)$ and $\nu_\a(p)$ satisfy the properties
in Proposition \ref{coass}. In particular, $\nu_\a=\Phi_\a(e_\a)$
for each $1\leq \a\leq 3$.
 With the aid of Lemma \ref{sec2}, Lemma \ref{sec6} and Proposition \ref{co_sec11}, one can proceed as
above to get some pointwise relations between the coefficients of the second fundamental form, see (\ref{co_sec1}), (\ref{co_sec12})-(\ref{co_sec10}). Denote
\begin{equation}\label{h}
h:=h_{1,22},
\end{equation}
then $h$ can be seen as a smooth function on $U$, and
\begin{equation}
h_{1,33}=h_{2,34}=h,\quad h_{3,24}=-h,\quad h_{1,44}=-2h.
\end{equation}

\textbf{Step I.} Prove
\begin{equation}\label{co_sec15}
h_{2,11}=h_{3,11}=h_{2,14}=h_{3,14}=h_{2,44}=h_{3,44}=0.
\end{equation}

By (\ref{sec9}),
\begin{equation}\label{co_sec13}
\aligned
\lan (\n_{e_1}B)_{e_1e_1},\Phi_{\th_1}(e_1)\ran=&(1/3)\k_{\th_1\th}\left(\lan B_{e_1e_1}^\th,B_{e_1e_1}^\th\ran+2|B_{e_1e_1}^\th|^2\right)\\
&-2\k_{\th\th_1}\lan B_{e_1e_1}^\th,\Phi_\th(A^{\Phi_{\th_1}(e_1)}e_1)\ran\\
=&\k_{\th_1\th}(h_{2,11}^2+h_{3,11}^2).
\endaligned
\end{equation}

Applying Lemma \ref{sec31}, we have
\begin{equation}\aligned
\lan (\n_{e_1}B)_{e_4e_4},\Phi_{\th_1}(e_1)\ran=&-2\k_{\th\th_1}\lan B_{e_1e_4}^\th,\Phi_\th(A^{\Phi_{\th_1}}e_4)_\th\ran\\
&+\k_{\th_1\th}|B_{e_1e_4}^\th|^2+\k_{\th_1\th}|(A^{\Phi_{\th_1}(e_1)}e_4)_\th|^2+\k_{\th_1 0}|(A^{\Phi_{\th_1}(e_1)}e_4)_0|^2\\
=&-2\k_{\th\th_1}(h_{2,14}h_{1,42}+h_{3,14}h_{1,43})\\
&+\k_{\th_1\th}(h_{2,14}^2+h_{3,14}^2)+\k_{\th_1\th}(h_{1,42}^2+h_{1,43}^2)+\k_{\th_1 0}h_{1,44}^2\\
=&\k_{\th_1\th}(h_{2,14}^2+h_{3,14}^2)+4\k_{\th_1 0}h^2,
\endaligned
\end{equation}
\begin{equation}\aligned
\lan (\n_{e_1}B)_{e_2e_2},\Phi_{\th_1}(e_1)\ran=&-2\k_{\th\th_1}\lan B_{e_1e_2}^\th,\Phi_\th(A^{\Phi_{\th_1}}e_2)_\th\ran\\
&+\k_{\th_1\th}|B_{e_1e_2}^\th|^2+\k_{\th_1\th}|(A^{\Phi_{\th_1}(e_1)}e_2)_\th|^2+\k_{\th_1 0}|(A^{\Phi_{\th_1}(e_1)}e_2)_0|^2\\
=&-2\k_{\th\th_1}(h_{2,12}h_{1,22}+h_{3,12}h_{1,23})\\
&+\k_{\th_1\th}(h_{2,12}^2+h_{3,12}^2)+\k_{\th_1\th}(h_{1,22}^2+h_{1,23}^2)+\k_{\th_1 0}h_{1,24}^2\\
=&\k_{\th_1\th}h_{1,22}^2=\k_{\th_1\th}h^2
\endaligned
\end{equation}
and similarly
\begin{equation}\label{co_sec14}
\lan (\n_{e_1}B)_{e_3e_3},\Phi_{\th_1}(e_1)\ran=\k_{\th_1 \th}h_{1,33}^2=\k_{\th_1\th}h^2.
\end{equation}
Adding (\ref{co_sec13})-(\ref{co_sec14}) gives
$$\aligned
0&=\lan \n_{e_1}H,\Phi_{\th_1}(e_1)\ran=\sum_{i=1}^4 \lan (\n_{e_1}B)_{e_ie_i},\Phi_{\th_1}(e_1)\ran\\
&=\k_{\th_1\th}(h_{2,11}^2+h_{3,11}^2+h_{2,14}^2+h_{3,14}^2)+2(2\k_{\th_1 0}+\k_{\th_1\th})h^2,
\endaligned$$
where
\begin{equation}\label{k10}\aligned
&2\k_{\th_1 0}+\k_{\th_1 \th}=\f{2\sin 2\th_1}{\cos 2\th_1-1}+\f{\sin 2\th_1}{\cos 2\th_1-\cos 2\th}\\
=&\f{2\sin 2\th_1}{\cos 2\th_1-1}+\f{\sin 2\th_1}{\cos 2\th_1+\cos\th_1}=\f{\sin 2\th_1\big[2(\cos 2\th_1+\cos\th_1)+\cos 2\th_1-1\big]}{(\cos 2\th_1-1)(\cos 2\th_1+\cos\th_1)}\\
=&\f{2\sin 2\th_1(3\cos \th_1-2)(\cos\th_1+1)}{(\cos 2\th_1-1)(\cos 2\th_1+\cos\th_1)}=0.
\endaligned
\end{equation}
Hence $h_{2,11}=h_{3,11}=h_{2,14}=h_{3,14}=0$ and substituting it into (\ref{co_sec9})-(\ref{co_sec10}) implies $h_{2,44}=h_{3,44}=0$.

\textbf{Step II.} Calculation of the connection coefficients.

Denote
\begin{equation}
 \G_{ij}^k:=\lan \n_{e_i}e_j,e_k\ran,\quad \bar{\G}_{i\a}^\be:=\lan \n_{e_i}\nu_\a,\nu_\be\ran.
\end{equation}
Then differentiating both sides of $\lan e_i,e_k\ran=\de_{jk}$ with respect to $e_i$ gives
$\G_{ij}^k+\G_{ik}^j=0$. In particular, $\G_{ij}^j=0$. Similarly
$\bar{\G}_{i\a}^\be+\bar{\G}_{i\be}^\a=0$ and especially $\bar{\G}_{i\a}^\a=0$.

Based on Lemma \ref{sec5}, a direct calculation shows

\begin{equation}\aligned
 \G_{i1}^4&=(S_{\th_1 0})_{e_1e_4}(e_i)=\k_{0\th_1}\lan B_{e_i e_1},\Phi_0(e_4)\ran-\k_{\th_1 0}\lan B_{e_i e_4},\Phi_{\th_1}(e_1)\ran\\
&=-\k_{\th_1 0}h_{1,i4},
\endaligned
\end{equation}
\begin{equation}\aligned
 \G_{i1}^2&=(S_{\th_1 \th})_{e_1e_2}(e_i)=\k_{\th\th_1}\lan B_{e_i e_1},\Phi_\th(e_2)\ran-\k_{\th_1 \th}\lan B_{e_i e_2},\Phi_{\th_1}(e_1)\ran\\
&=\k_{\th\th_1}h_{2,i1}-\k_{\th_1 \th}h_{1,i2},
\endaligned
\end{equation}
\begin{equation}\aligned
 \G_{i2}^4&=(S_{\th 0})_{e_2e_4}(e_i)=\k_{0\th}\lan B_{e_i e_2},\Phi_0(e_4)\ran-\k_{\th 0}\lan B_{e_i e_4},\Phi_{\th}(e_2)\ran\\
&=-\k_{\th 0}h_{2,i4}
\endaligned
\end{equation}
and similarly
\begin{equation}
 \G_{i1}^3=(S_{\th_1 \th})_{e_1 e_3}(e_i)=\k_{\th\th_1}h_{3,i1}-\k_{\th_1 \th}h_{1,i3},
\end{equation}
\begin{equation}
 \G_{i3}^4=(S_{\th 0})_{e_3e_4}(e_i)=-\k_{\th 0}h_{3,i4}.
\end{equation}
By Lemma \ref{sec7},
\begin{equation}\aligned
 \bar{\G}_{21}^3&=(S_{\th_1 \th}^N)_{\nu_1 \nu_3}(e_2)= \k_{\th_1\th}
\lan B_{e_2,\Phi_{\th_1}(\nu_1)},\nu_3\ran-\k_{\th\th_1}\lan B_{e_2,\Phi_\th(\nu_3)},\nu_1\ran\\
&=\k_{\th\th_1}h_{1,23}-\k_{\th_1\th}h_{3,21}=0.
\endaligned
\end{equation}

\textbf{Step III.} Proof that the angle lines with respect to $\th_1$, i.e. integral curves of the vector field $e_1$,
 must be straight lines in Euclidean space.

This is equivalent to  $\ol{\n}_{e_1}e_1=0$ holding everywhere, which follows from the  following straightforward calculation.
$$\aligned
\ol{\n}_{e_1}e_1&=B_{e_1e_1}+\n_{e_1}e_1=h_{\a,11}\nu_\a+ \G_{11}^i e_i=\sum_{i=2}^3 \G_{11}^i e_i+\G_{11}^4 e_4\\
&=\sum_{i=2}^3 (\k_{\th\th_1}h_{i,11}-\k_{\th_1 \th}h_{1,1i})e_i-\k_{\th_1 0}h_{1,14}e_4=0.
\endaligned$$

\textbf{Step IV.} Proof that there exists a hypersurface $N$ of $U$, such that $p_0\in N$ and $e_1(p)\bot T_p N$ for every $p\in N$.

By the  Frobenius theorem, it suffices to prove that the subbundle $e_1^\bot$ of $TU$ is integrable; more precisely, given arbitrary smooth sections $X,Y$ of
$e_1^\bot$, $[X,Y]$ takes values in $e_1^\bot$ as well.

Now we write $X=\sum_{i=2}^4 X^i e_i$ and $Y=\sum_{j=2}^4 Y^j e_j$, then
$$[X,Y]=X^i Y^j [e_i,e_j]+X^i (\n_{e_i}Y^j)e_j-Y^j(\n_{e_j}X^i)e_i$$
and hence
$$\lan [X,Y],e_1\ran=X^i Y^j\lan [e_i,e_j],e_1\ran.$$
Hence it is necessary and sufficient for us to show $\lan [e_i,e_j],e_1\ran=0$ for any $2\leq i<j\leq 4$.

Since $\n$ is torsion-free,
$$\aligned
\lan [e_2,e_3],e_1\ran=&\lan \n_{e_2}e_3,e_1\ran-\lan \n_{e_3}e_2,e_1\ran=\G_{23}^1-\G_{32}^1=-\G_{21}^3+\G_{31}^2\\
=&-(\k_{\th\th_1}h_{3,21}-\k_{\th_1\th}h_{1,23})+(\k_{\th\th_1}h_{2,31}-\k_{\th_1\th}h_{1,32})\\
=&0,
\endaligned$$
$$\aligned
\lan [e_2,e_4],e_1\ran=&\lan \n_{e_2}e_4,e_1\ran-\lan \n_{e_4}e_2,e_1\ran=\G_{24}^1-\G_{42}^1=-\G_{21}^4+\G_{41}^2\\
=&\k_{\th_1 0}h_{1,24}+(\k_{\th\th_1}h_{2,41}-\k_{\th_1\th}h_{1,42})\\
=&0
\endaligned$$
and similarly
$$\lan [e_3,e_4],e_1\ran=\k_{\th_1 0}h_{1,34}+(\k_{\th\th_1}h_{3,41}-\k_{\th_1 \th}h_{1,43})=0.$$
Then the claim is proved.

Without loss of generality, we can assume that the closure of $N$ is contained in $U$. Then
there exists $\de>0$, such that $\mb{X}(p)+te_1\in U$ for every $p\in N$ and any $t\in (-\de,\de)$, where
$\mb{X}(p)$ denotes the position vector of $p$ in $\Im \O$.
Define $\phi: N\times (-\de,\de)\ra U$
\begin{equation}
 (p,t)\mapsto \mb{X}(p)+te_1,
\end{equation}
then $\phi$ is a diffeomorphism between $N\times (-\de,\de)$ and a neighborhood of $p_0$ in $M$, which is denoted by
$W$.

\textbf{Step V.} The function $h$ defined in (\ref{h}) is constant on $N$.

Applying the Codazzi equations,
$$\aligned
\n_{e_4}h&=\n_{e_4}h_{1,22}=\n_{e_4}\lan B_{e_2e_2},\nu_1\ran\\
&=\lan (\n_{e_4}B)_{e_2e_2},\nu_1\ran+2\G_{42}^i h_{1,2i}+\G_{41}^\a h_{\a,22}\\
&=\lan (\n_{e_4}B)_{e_2e_2},\nu_1\ran=\lan (\n_{e_2}B)_{e_2e_4},\nu_1\ran\\
&=\n_{e_2}h_{1,24}-\G_{22}^i h_{1,i4}-\G_{24}^i h_{1,2i}-\bar{\G}_{21}^\a h_{\a,24}\\
&=2\G_{22}^4 h-\G_{24}^2 h+\bar{\G}_{21}^3 h=3\G_{22}^4 h\\
&=-3\k_{\th 0}h_{2,24}h=0,
\endaligned$$
$$\aligned
\n_{e_2}h&=\n_{e_2}h_{1,33}=\n_{e_2}\lan B_{e_3e_3},\nu_1\ran\\
&=\lan (\n_{e_2}B)_{e_3e_3},\nu_1\ran+2\G_{23}^i h_{1,3i}+\bar{\G}_{21}^\a h_{\a,33}\\
&=\lan (\n_{e_2}B)_{e_3e_3},\nu_1\ran=\lan (\n_{e_3}B)_{e_2e_3},\nu_1\ran\\
&=\n_{e_3}h_{1,23}-\G_{32}^i h_{1,i3}-\G_{33}^i h_{1,2i}-\bar{\G}_{31}^\a h_{\a,23}\\
&=-\G_{32}^3 h-\G_{33}^2 h=0
\endaligned$$
and similarly
$$\n_{e_3}h=\n_{e_3}h_{1,22}=-\G_{23}^2 h-\G_{22}^3 h=0.$$
Hence $\n h\equiv 0$ on $N$.
Without loss of generality, we can assume $h|_N\equiv h_0$, with $h_0$ a nonnegative constant.

\textbf{Step VI.}  $W$ is a cone whenever $h_0>0$.

Define $\psi:N\ra \Im\O$
$$\psi(p)=\mb{X}(p)+R_0e_1(p),$$
where $R_0$ is a constant to be chosen. Then
$$\aligned
\psi_* e_i&=e_i+R_0\ol{\n}_{e_i}e_1\\
&=e_i+R_0\big(B_{e_i e_1}+\n_{e_i}e_1\big)\\
&=e_i+R_0\G_{i1}^j e_j\\
&=e_i+R_0\big[\sum_{j=2}^3 (\k_{\th\th_1}h_{j,i1}-\k_{\th_1\th}h_{1,ij})e_j-\k_{\th_1 0}h_{1,i4}e_4\big]
\endaligned$$
for each $2\leq i\leq 4$. More precisely,
\begin{equation}\label{co_sec16}\aligned
\psi_* e_2&=(1-R_0\k_{\th_1 \th}h_{1,22})e_2=(1-R_0 \k_{\th_1\th}h_0)e_2,\\
\psi_* e_3&=(1-R_0\k_{\th_1 \th}h_{1,33})e_3=(1-R_0\k_{\th_1\th}h_0)e_3,\\
\psi_* e_4&=(1-R_0\k_{\th_1 0}h_{1,44})e_4=(1+2R_0\k_{\th_1 0}h_0)e_4.
\endaligned
\end{equation}
Now we put
$$R_0:=(\k_{\th_1\th}h_0)^{-1},$$
then combining (\ref{co_sec16}) and (\ref{k10}) implies $\psi_* e_i=0$ for each $2\leq i\leq 4$. Hence
$\psi$ is a constant map on $N$. Without loss of generality, we can assume $\psi\equiv 0$,
i.e. $F(p)=-R_0e_1(p)$ for every $p\in N$. In other words, $N$ lies in the Euclidean sphere centered at
$0$ and of radius $R_0$, and an arbitrary normal line of $N$, i.e. $\{F(p)+te_1:t\in \R\}$ with
$p\in N$, must go through the origin. Therefore $W$ is a cone.

\textbf{Step VII.}  $M$ is an open subset of $M(a_0)$ provided that $h_0>0$ and $\psi\equiv 0$.

Let
\begin{equation}
 S:=\{x\in \Im\O:|x|=R_0,|\mc{P}_0^\bot x|=\cos\th_1 R_0\}
\end{equation}
be a submanifold of $\Im\O$. For any $x\in S$, there exist a unit element $b\in \Im\H$ and a unit element
$\ep\in \H e$, such that
$$x=R_0(-\sin\th_1 b+\cos\th_1 b\ep).$$
Define
\begin{equation}
 E_x=\R \ep\oplus \{\sin\th c-\cos\th c\ep: c\in \Im\H, \lan b,c\ran=0\},
\end{equation}
then $E_x$ is a $3$-dimensional subspace of $T_x S$. Furthermore
\begin{equation}
 E:=\{E_x:x\in S\}
\end{equation}
is a $3$-dimensional distribution on $S$.

For any $p\in N$, $e_1(p)$ is a unit tangent angle direction associated to $\th_1$. Hence there exist $b\in \Im\H$
and $\ep\in \H e$ satisfying $|b|=|\ep|=1$, such that
$$e_1(p)=\sin\th_1 b-\cos\th_1 b\ep.$$
Moreover,
$$\aligned
\mb{X}(p)&=\psi(p)-R_0 e_1(p)=-R_0 e_1(p)\\
&=R_0(-\sin\th_1 b+\cos\th_1 b\ep).
\endaligned$$
Therefore $N\subset S$.

Denote
$$\aligned
\nu_1:=&\Phi_{\th_1}(e_1)=(-\tan\th_1 \mc{P}_0^\bot+\cot\th_1 \mc{P}_0)e_1\\
=&\cos\th_1 b+\sin\th_1 b\ep,
\endaligned$$
then $\nu_1$ is a unit angle direction of $N_p M$ with respect to $\Im \H$. On the other hand,
Proposition \ref{o8} implies the existence of an orthonormal basis $\{b_1,b_2,b_3\}$ of $\Im\H$
satisfying $b_3=b_1 b_2$ and a unit element $\ep'\in \H e$, such that
$$\nu'_\a:=\cos\th_\a b_\a+\sin\th_\a b_\a \ep'\qquad \forall 1\leq \a\leq 3$$
are all unit angle directions of $N_p M$ relative to $\Im\H$. Since $m_{\th_1}=1$,
$\nu'_1=\pm \nu_1$, and then one can assume $b_1=b$, $\ep'=\ep$ without loss of generality, which implies
$$N_p M=\R \nu_1\oplus \{\cos\th c+\sin\th c\ep: c\in \Im H,\lan b,c\ran=0\}.$$
Noting that $T_p N\bot N_p M$ and $T_p N\bot e_1$, it is easy to deduce that
$T_p N=E_p$, i.e. $N$ is an integral manifold of $E$.

For any $a\in \text{Sp}_1$, $M(a)$ is a coassociative cone, which has CJA with $\text{Arg}^T=\{\th_1,\th,0\}$,
and each ray is an angle line with respect to $\th_1$. As above, one can show that $M(a)\cap B(R_0)\subset S$
and that it is also an integral manifold of $E$.

Now we write
\begin{equation}
 \mb{X}(p_0)=R_0(\sin\th_1 b_0+\cos\th_1 c_0 e)=(2/3)R_0\big[(\sqrt{5}/2)b_0+c_0 e\big]
\end{equation}
with $b_0\in \Im \H, c_0\in \H$ satisfying $|b_0|=|c_0|=1$. Then choosing
\begin{equation}
 a_0:=c_0b_0\bar{c}_0,\quad q_0:=\bar{c}_0
\end{equation}
gives
$$\mb{X}(p_0)=(2/3)R_0\big[(\sqrt{5}/2)q_0 a_0 \bar{q}_0+\bar{q}_0 e\big]\in M(a_0).$$
Therefore $N$ and $M(a_0)\cap B(R_0)$ are both integral manifolds of $E$. Since $M(a_0)\cap B(R_0)$ is complete,
applying the Frobenius theorem implies $N\subset M(a_0)\cap B(R_0)$, and hence $W\subset M(a_0)$. Finally, because minimal submanfolds
in Euclidean space are analytic manifolds, $M$ has to be an open subset of $M(a_0)$.

\textbf{Step VIII.} $M$ is affine linear whenever $h_0=0$.

Firstly, $h_0=0$ implies $B\equiv 0$ on $N$. Denote by $\td{B}$ the second fundamental form of $N$ in $\Im\O$, then
$$\lan \td{B}_{e_i e_j},\nu_\a\ran=\lan \ol{\n}_{e_i}e_j,\nu_\a\ran=\lan B_{e_i e_j},\nu_\a\ran=0$$
for any $2\leq i,j\leq 4$ and $1\leq \a\leq 3$,
$$\aligned
\lan \td{B}_{e_ie_j},e_1\ran&=\lan \ol{\n}_{e_i}e_j,e_1\ran=\lan \n_{e_i}e_j,e_1\ran=\G_{ij}^1\\
&=-\G_{i1}^j=-(\k_{\th\th_1}h_{j,i1}-\k_{\th_1\th}h_{1,ij})=0
\endaligned$$
for any $2\leq j\leq 3$ and
$$\lan \td{B}_{e_ie_4},e_1\ran=\G_{i4}^1=-\G_{i1}^4=\k_{\th_1 0}h_{1,i4}=0.$$
Thus $\td{B}\equiv 0$, i.e. $N$ is totally geodesic.

Since
$$\aligned
\ol{\n}_{e_i}e_1&=\sum_{j=2}^4 \lan \n_{e_i}e_1,e_j\ran e_j+B_{e_ie_1}\\
&=\sum_{j=2}^4 \G_{i1}^j e_j+B_{e_ie_1}=0
\endaligned$$
for each $2\leq i\leq 4$, $e_1$ is parallel along $N$. Therefore $W$ is an open subset of an affine
linear subspace of $\Im \O$. Due to the analyticity of minimal submanifolds, $M$ has to be affine linear. And
the proof is completed.

\end{proof}

Proposition \ref{p1} and Proposition \ref{p2} together imply the following theorem.

\begin{thm}
 Let $M$ be a coassociative submanifold in $\Im \O$. Assume $M$ has CJA relative to $\Im \H$. If
$g^N\leq 2$ and $\pi/2\notin \text{Arg}^N$, then either $M$ is affine linear, or there exists $a_0\in \text{Sp}_1$ and $w_0\in \Im \O$, such that $M$ is an open subset of
\begin{equation*}
M(a_0,w_0):=\{r\big[(\sqrt{5}/2) qa_0 \bar{q}+\bar{q}e\big]+w_0:q\in \text{Sp}_1,r\in \Bbb{R}^+\}.
\end{equation*}
In other words, $M$ is a translate of a portion of the Lawson-Osserman's cone.
\end{thm}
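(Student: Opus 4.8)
The plan is to derive this theorem as a direct synthesis of Proposition \ref{p1} and Proposition \ref{p2}, whose proofs already carry out all of the substantive algebra and differential geometry. The coassociative and minimal structure, together with the second-fundamental-form identities in Lemma \ref{sec30} and Lemma \ref{sec31}, is exploited inside those two propositions; what remains for the theorem is only to verify that, under the standing hypotheses $g^N\le 2$ and $\pi/2\notin\Arg^N$, every coassociative CJA submanifold falls into the scope of exactly one of them, and that their two conclusions assemble into the stated alternative.

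First I would record the structure forced by coassociativity. At each $p\in M$ the normal space $N_pM$ is an associative $3$-plane, so by Proposition \ref{o8} its three Jordan angles $0\le\th_1\le\th_2\le\th_3$ relative to $\Im\H$ obey the addition law $\th_3=\th_1+\th_2$ when $\th_1+\th_2\le\pi/2$ and $\th_3=\pi-(\th_1+\th_2)$ otherwise. Because $M$ has CJA these angles (and their multiplicities) are independent of $p$, and because $g^N\le 2$ at most two of the three are distinct. A short check using the addition law then shows that, once the degenerate possibility $\pi/2\in\Arg^N$ is discarded (it is excluded by hypothesis, and it is precisely the situation $\th_1+\th_2=\pi/2$, which forces $\th_3=\pi/2$), the admissible configurations of $\Arg^N$ are exactly those enumerated as Cases I--IV in the proof of Proposition \ref{p1}, together with the single exceptional pair $\Arg^N=\{\arccos(\sqrt{6}/6),\arccos(2/3)\}$.

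Next I would split on the value of $\Arg^N$. If $\Arg^N\neq\{\arccos(\sqrt{6}/6),\arccos(2/3)\}$, then all hypotheses of Proposition \ref{p1} are satisfied and it yields that $M$ is affine linear. If instead $\Arg^N=\{\arccos(\sqrt{6}/6),\arccos(2/3)\}$, then $\pi/2\notin\Arg^N$ and $g^N=2$ hold automatically, so Proposition \ref{p2} applies and gives that either $M$ is affine linear or there exists $a_0\in\text{Sp}_1$ such that $M$ is a translate of an open subset of the Lawson-Osserman cone $M(a_0)$. Writing the translation vector as $w_0\in\Im\O$ recasts this latter conclusion as $M\subset M(a_0,w_0)$, which is the form asserted in the statement.

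The one point genuinely requiring care is not any computation but the \emph{exhaustiveness and disjointness} of the dichotomy: one must confirm that the configuration excluded from the hypothesis of Proposition \ref{p1} is exactly the hypothesis of Proposition \ref{p2}, so that no admissible value of $\Arg^N$ is left uncovered and none leads to conflicting conclusions. Given the explicit addition law of Proposition \ref{o8}, this reduces to a finite and routine inspection of the angle triples compatible with $g^N\le 2$ and $\pi/2\notin\Arg^N$. Once it is in place, the two conclusions combine immediately to give that $M$ is either affine linear or a translate of a portion of the Lawson-Osserman cone.
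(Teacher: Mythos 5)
Your proposal is correct and is exactly the paper's argument: the paper deduces the theorem in one line by combining Proposition \ref{p1} (which covers every admissible $\Arg^N$ other than the exceptional pair and yields affine linearity) with Proposition \ref{p2} (which handles $\Arg^N=\{\arccos(\sqrt{6}/6),\arccos(2/3)\}$ and yields the Lawson--Osserman alternative). Note only that the exhaustiveness check you flag is tautological rather than substantive --- the hypothesis of Proposition \ref{p1} excludes precisely the set equality assumed in Proposition \ref{p2}, so the dichotomy ``$\Arg^N$ equals the exceptional pair or it does not'' needs no enumeration via Proposition \ref{o8}; that angle analysis lives inside the proof of Proposition \ref{p1}, not at the theorem level.
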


As at the end of Section \ref{s4}, we have a corollary.

\begin{cor}
Let $D$ be an open domain of $\H$ and $f: D\ra \Im \H$. If $M=\text{graph }f$ is
a coassociative submanifold with CJA relative to $\Im \H$, and $g^N\leq 2$,
then $f$ is either an affine linear function or
$f(x)=\eta(x-x_0)+y_0$,
where $x_0\in \H$, $y_0\in \Im \H$ and
$$\eta(x)=\f{\sqrt{5}}{2|x|}\bar{x}\ep x$$
with $\ep$ an arbitrary unit element in $\Im \H$.

\end{cor}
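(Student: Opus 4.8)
The plan is to deduce this corollary directly from the preceding Theorem, whose hypotheses are $g^N\le 2$ and $\pi/2\notin\Arg^N$. The first is assumed outright, and the second is automatic for a graph. Indeed, as already noted at the end of Section \ref{s4} (see \cite{x-y1}), for $M=\mathrm{graph}\,f$ with $f:D\subset\H\to\Im\H$ every Jordan angle between $N_pM$ and the coordinate $4$-plane lies in $[0,\pi/2)$. In the present setting the relevant coordinate splitting is $\Im\O=\H e\oplus\Im\H$, with base the $4$-plane $\H e\cong\H$ and fibre $Q_0=\Im\H$. One checks the claim directly: by $P_{\pi/2}=P\cap Q_0^\bot$ we would have $\pi/2\in\Arg(N_pM,\Im\H)$ iff $N_pM\cap\H e\ne\{0\}$; but $T_pM$ projects isomorphically onto the base $\H e$, so a nonzero $w\in N_pM\cap\H e$ would satisfy $\langle w,t_w\rangle=|w|^2>0$ for the tangent vector $t_w$ over $w$, contradicting $w\perp T_pM$. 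Hence $\pi/2\notin\Arg^N$ and the Theorem applies.

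Applying the Theorem, either $M$ is affine linear, or $M$ is an open subset of $M(a_0,w_0)=M(a_0)+w_0$ for some $a_0\in\text{Sp}_1$ (necessarily a unit imaginary quaternion, since $q a_0\bar q\in\Im\H$ forces $\Re a_0=0$) and some $w_0\in\Im\O$. In the first case $M$ is an affine $4$-plane; being simultaneously the graph of $f$, it projects bijectively onto the base $\H$, so $f$ is forced to be an affine linear map $\H\to\Im\H$, as claimed.

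In the remaining case I would make the graph structure of $M(a_0)$ explicit. Writing $x:=r\bar q$, so that the base component $r\bar q e$ equals $xe$ with $|x|=r$ and $\bar q=x/|x|$, a short rearrangement of the defining formula (\ref{l-o-cone}) yields
\begin{equation*}
M(a_0)=\{\,(xe,\ \eta(x)):x\in\H\setminus\{0\}\,\},\qquad \eta(x)=\frac{\sqrt5}{2|x|}\bar x\,a_0\,x,
\end{equation*}
that is, $M(a_0)$ is the graph of $\eta$ with $\ep=a_0$. Decomposing the translation vector as $w_0=x_0e+y_0$ with $x_0\in\H$ and $y_0\in\Im\H$, the translate $M(a_0)+w_0$ sends the point $(xe,\eta(x))$ to $((x+x_0)e,\eta(x)+y_0)$, i.e. it is the graph of $x\mapsto\eta(x-x_0)+y_0$. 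Since $M$ is an open subset of this translate and is itself a graph over $D$, uniqueness of the graphing function gives $f(x)=\eta(x-x_0)+y_0$ on $D$, completing the proof.

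I expect the only delicate point to be the bookkeeping between the two decompositions of $\Im\O$ in play—the coordinate splitting $\H e\oplus\Im\H$ that presents $M$ as a graph, versus the splitting $\Im\H\oplus\H e$ used throughout the coassociative analysis—and in particular verifying that an ambient translation of $M(a_0)$ corresponds precisely to translating both the domain and the range of $\eta$. This is routine linear algebra once the identification $\H e\cong\H$ is fixed, but it is where sign and ordering conventions must be handled with care.
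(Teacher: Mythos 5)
Your proposal is correct and follows essentially the same route as the paper: the paper likewise deduces this corollary from the preceding structure theorem by observing (as at the end of Section \ref{s4}, citing \cite{x-y1}) that for a graph every Jordan angle between $N_pM$ and the coordinate plane lies in $[0,\pi/2)$, so $\pi/2\notin\Arg^N$ and the theorem applies. You merely make explicit two steps the paper leaves implicit---a direct verification that $N_pM\cap \H e=\{0\}$ for a graph, and the bookkeeping identifying the translated cone $M(a_0,w_0)$ with the graph of $x\mapsto\eta(x-x_0)+y_0$ under $\H\cong\H e$---and both check out.
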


This is the Theorem 1.2 in \S 1.5.

\bibliographystyle{amsplain}

\end{document}